\definecolor{darkblue}{rgb}{0.0, 0.0, 0.55}
\definecolor{deepaqua}{RGB}{0, 197, 189}
\definecolor{edgepurple}{RGB}{160, 43, 167}
\newtheorem{theorem}{Theorem}[section]
\newtheorem{assumption}[theorem]{Assumption}
\newtheorem{lemma}[theorem]{Lemma}
\newtheorem{remark}[theorem]{Remark}
\newtheorem{definition}[theorem]{Definition}
\newcommand{\eremk}{\hbox{}\hfill\rule{0.8ex}{0.8ex}}
\newcolumntype{C}{>{\centering\arraybackslash}p{1.50cm}}
\newcommand{\R}{\mathbb{R}}
\DeclareMathOperator*{\diam}{diam}
\DeclareMathOperator*{\card}{card}
\newcommand{\hK}{h_K}
\newcommand{\hcalK}{h_{\calK}}
\newcommand{\hKF}{h_{K_F}}
\newcommand{\KF}{K_F}
\newcommand{\calK}{\mathcal{K}}
\newcommand{\pK}{p_K}
\newcommand{\pcalK}{p_{\mathcal{K}}}
\newcommand{\pKF}{p_{K_F}}
\newcommand{\pKr}{p_{K_r}}
\newcommand{\lK}{l_{\calK}}
\newcommand{\ellK}{\ell_{\calK}}
\newcommand{\lKr}{\Lambda_{K_r}}
\newcommand{\p}{\boldsymbol{p}}
\newcommand{\mvl}[1]{\{\!\!\{#1\}\!\!\}}
\newcommand{\jump}[1]{\llbracket #1 \rrbracket_{\mathsf{N}}}
\newcommand{\Omegah}{\Omega_h}
\newcommand{\Tchar}{\mathcal{T}^{\#}}
\newcommand{\FK}{\mathcal{F}_K}
\newcommand{\FKd}{\mathcal{F}_K^{\small{\diamond}}}
\newcommand{\sKF}{s_K^F}
\newcommand{\NK}{\mathcal{N}_K}
\newcommand{\NKr}{\mathcal{N}_{K_r}}
\newcommand{\FKs}{\mathcal{F}_{K_s}}
\newcommand{\FKr}{\mathcal{F}_{K_r}}
\newcommand{\FKout}{\mathcal{F}_K^{\mathrm{out}}}
\newcommand{\FKFout}{\mathcal{F}_{K_F}^{\mathrm{out}}}
\newcommand{\FKrout}{\mathcal{F}_{K_r}^{\mathrm{out}}}
\newcommand{\Fh}{\mathcal{F}_h}
\newcommand{\Fhs}{\mathcal{F}_h^{\star}}
\newcommand{\FhI}{\mathcal{F}_h^{\mathcal{I}}}
\newcommand{\FhD}{\mathcal{F}_h^{\mathcal{D}}}
\newcommand{\FhN}{\mathcal{F}_h^{\mathcal{N}}}
\newcommand{\NF}{\nu_F^{\mathrm{out}}}
\newcommand{\NKout}{\mathcal{N}_K^{\mathrm{out}}}
\newcommand{\NKrout}{\mathcal{N}_{K_r}^{\mathrm{out}}}
\newcommand{\Nh}{\mathcal{N}_h^{\mathrm{ele}}}
\newcommand{\LDGf}{\mathrm{LDG}_{\mathsf{f}}}
\newcommand{\LDGw}{\mathrm{LDG}_{\mathsf{w}}}
\newcommand{\xiF}{\chi_F}
\newcommand{\br}{\boldsymbol{r}}
\newcommand{\uh}{u_h}
\newcommand{\UK}{\mathcal{U}_{\calK}}
\newcommand{\vphi}{\boldsymbol{\phi}}
\newcommand{\qh}{\q_h}
\newcommand{\sigmah}{\boldsymbol{\sigma}_h}
\newcommand{\sh}{\boldsymbol{s}_h}
\newcommand{\rh}{\boldsymbol{r}_h}
\newcommand{\vh}{v_h}
\newcommand{\wh}{w_h}
\newcommand{\Vhp}{\mathcal{V}_{h}^{\boldsymbol{p}}}
\newcommand{\Mhp}{\boldsymbol{\mathcal{M}}_{h}^{\boldsymbol{p}}}
\newcommand{\Pp}[2]{\mathcal{P}^{#1}(#2)}
\newcommand{\bs}{\boldsymbol{\psi}}
\newcommand{\bS}{\boldsymbol{\Psi}}
\newcommand{\uflux}{\widehat{u}_h}
\newlength{\dhatheight}
\newcommand\sflux{
    \settoheight{\dhatheight}{\ensuremath{\widehat{q_h}}}
    \addtolength{\dhatheight}{-0.40ex}
    \widehat{\vphantom{\rule{1pt}{\dhatheight}}
    \smash{\widehat{\bsigma}}}_h
}
\newcommand{\nK}{\boldsymbol{n}_{K}}
\newcommand{\nKr}{\boldsymbol{n}_{K_r}}
\newcommand{\nF}{\boldsymbol{n}_F}
\newcommand{\nKo}{\boldsymbol{n}_{K_1}}
\newcommand{\Lh}{\mathcal{L}_h}
\newcommand{\LhD}{\mathcal{L}_h^{\mathcal{D}}}
\newcommand{\LF}{\mathcal{L}^F}
\newcommand{\LD}{\mathcal{L}_{\mathcal{D}}^F}
\newcommand{\Ah}{\mathcal{A}_h}
\newcommand{\calA}{\mathcal{A}}
\newcommand{\lh}{\ell_h}
\newcommand{\Seminorm}[2]{|#1|_{#2}}
\newcommand{\Norm}[2]{\|#1\|_{#2}}
\newcommand{\Tnorm}[2]{|\!|\!|#1|\!|\!|_{#2}}
\newcommand{\CDG}{_{\mathrm{CDG}}}
\newcommand{\CA}{C_{\mathcal{A}}}
\newcommand{\mh}{\boldsymbol{m}_h}
\renewcommand{\dh}{\boldsymbol{d}_h}
\newcommand{\bgradh}{\boldsymbol{b}_h^{\nabla}}
\newcommand{\bav}{\boldsymbol{b}_h^{\mathrm{av}}}
\newcommand{\gh}{g_h}
\DeclareMathOperator*{\essinf}{ess\,inf}
\newcommand{\nablah}{\nabla_h}
\newcommand{\Id}{\boldsymbol{\mathrm{I}}}
\newcommand{\Pih}{\boldsymbol{\Pi}_h}
\newcommand{\M}{\boldsymbol{M}}
\newcommand{\D}{\boldsymbol{D}}
\newcommand{\B}{\boldsymbol{B}}
\newcommand{\Bgrad}{\boldsymbol{B}^{\nabla}}
\newcommand{\Bav}{\boldsymbol{B}^{\mathrm{av}}}
\newcommand{\Bavl}{\boldsymbol{B}^{\mathrm{av}(\ell)}}
\newcommand{\calD}{\boldsymbol{\cal{D}}}
\newcommand{\Pihp}{\widetilde{\Pi}_{hp}}
\newcommand{\PPihp}{\widetilde{\boldsymbol{\Pi}}_{hp}}
\newcommand{\gD}{g_{\mathrm{D}}}
\newcommand{\GD}{\Gamma_{\mathrm{D}}}
\newcommand{\gN}{g_{\mathrm{N}}}
\newcommand{\GN}{\Gamma_{\mathrm{N}}}
\newcommand{\nOmega}{\boldsymbol{n}_{\Omega}}
\newcommand{\bx}{\boldsymbol{x}}
\newcommand{\q}{\boldsymbol{q}}
\newcommand{\bsigma}{\boldsymbol{\sigma}}
\newcommand{\dx}{\,\mathrm{d}\bx}
\newcommand{\dS}{\,\mathrm{d}S}
\newcommand{\bk}{\boldsymbol{\kappa}}
\renewcommand{\v}{\boldsymbol{v}}
\newcommand{\Ctr}{C_{\mathrm{tr}}}
\newcolumntype{C}{>{\centering\arraybackslash}p{1.50cm}}
\numberwithin{equation}{section}
\title{On the Compact Discontinuous Galerkin method \\
for polytopal meshes}
\author{Mattia Corti\thanks{Faculty of Mathematics, University of Vienna, Oskar-Morgenstern-Platz 1, Vienna, 1090, Austria} 
\thanks{MOX-Dipartimento di Matematica, Politecnico di Milano, Piazza Leonardo da Vinci 32, 20133 Milan, Italy (\href{mailto:mattia.corti@polimi.it}{mattia.corti@polimi.it})}\ \orcidlink{0000-0002-7014-972X} \and Sergio G\'omez\thanks{Department of Mathematics and Applications, University of Milano-Bicocca, Via Cozzi 55, 20125 Milan, Italy (\href{mailto:sergio.gomezmacias@unimib.it}{sergio.gomezmacias@unimib.it})}
\thanks{IMATI-CNR ``E. Magenes", Via Ferrata 5, 27100 Pavia, Italy}\ \orcidlink{0000-0001-9156-5135}}
\date{}
\begin{document}

\maketitle

\begin{abstract}
\noindent 
The Compact Discontinuous Galerkin method was introduced by Peraire and Persson in (SIAM J. Sci. Comput., 30, 1806--1824, 2008).
In this work, we present the stability and convergence analysis for the~$hp$-version of this method applied to  elliptic problems on polytopal meshes. 
Moreover, we introduce fast and practical algorithms that allow the CDG, LDG, and BR2 methods to be implemented within a unified framework.
Our numerical experiments show that the CDG method yields a compact stencil for the stiffness matrix, with faster assembly and solving times compared to the LDG and BR2 methods. 
We numerically study how coercivity depends on the method parameters for various mesh types, with particular focus on the number of facets per mesh element. Finally, we demonstrate the importance of choosing the correct directions for the numerical fluxes when using variable polynomial degrees.
\end{abstract}

\paragraph{Keywords.} Compact Discontinuous Galerkin method; $hp$ error analysis; polytopal meshes

\paragraph{Mathematics Subject Classification.} 65N30, 65N12, 65N15

\section{Introduction}
In this work, we study some theoretical and computational aspects of the Compact Discontinuous Galerkin (CDG) method for elliptic problems on polytopal meshes. 
The CDG method was first introduced by Peraire and Persson in~\cite{Periaire_Persson:2008} and 
combines features of the Local Discontinuous Galerkin (LDG) method proposed by Cockburn and Shu~\cite{Cockburn_Shu:1998}, and of the modified version 
of the Bassi--Rebay method (BR2) in~\cite{Bassi_Rebay:1997}. The CDG method has also been analyzed for nonlinear convection--diffusion problems~\cite{Brdar_Dedner_Klofkorn:2012} and nearly incompressible linear elasticity models~\cite{Huang_Huang:2013}. In addition, geometric multigrid solvers for the CDG method were investigated in~\cite{Pan_Persson:2022}.

The most attractive feature of the CDG method is that, due to its \emph{built-in stabilization}, it shares the outstanding stability properties of the LDG method while recovering a \emph{compact stencil} similar to that of the BR2 method, i.e., only the degrees of freedom (DoFs) belonging to neighboring elements are connected in the discretization. 
Such a compact stencil is typical of methods formulated directly for the primal variable, e.g., the interior-penalty discontinuous Galerkin (IPDG) \cite{Arnold:1982} and the direct discontinuous Galerkin (DDG) \cite{Liu_Yan:2008} methods. 
In contrast, in the LDG framework, an arbitrary choice of the weighted-average parameters in the definition of the numerical fluxes may lead to a \emph{full stencil}, involving not only immediate neighbors but also their neighbors~(see~\cite[\S3.3]{Sherwin_etal:2006} and~\cite[\S4.1]{Castillo:2002}). 
In~\cite{Castillo:2010}, some heuristic algorithms were studied to determine suitable choices of the numerical fluxes that reduce the stencil of the LDG method for simplicial meshes. However, the discussion in~\cite{Castillo:2010} makes it clear that a compact stencil cannot be achieved for arbitrary unstructured meshes. Finally, the CDG method 
shares the characteristic feature of the LDG method of using weighted averages in the definition of the numerical fluxes, which allows for a reduction 
of the interface integrals to compute, saving %
computational cost of assembling the stiffness matrix compared to the BR2 method. This is of particular interest 
when agglomerated meshes with a large number of facets for each element are considered (see~\cite{Bassi_etal:2012,antonietti_polytopal_2026}).

The stencil of a method directly impacts both computational efficiency and memory requirements. %
A reduction of the stencil is particularly relevant for polytopal meshes, where each element may have several neighbors, %
which can lead to a dramatic increase in the density of the associated matrices. In recent decades, there has been a steadily growing interest in numerical methods designed to handle polytopal meshes, as they allow for a more efficient approximation of complex geometries or interfaces. 
Such a class includes the virtual element method (VEM) \cite{Beirao_etal:2013}, the Hybrid High-Order (HHO) method~\cite{DiPietro_Ern_Lemaire:2014}, and discontinuous Galerkin (DG) methods \cite{Wihler:2002,Cangiani_Georgoulis_Houston:2014,Cangiani_Dong_Georgoulis_Houston:2017}.

In this work, we focus on DG methods, which have the advantage that the number of DoFs is independent of the geometry of the elements.

\paragraph{Model problem} Let~$\Omega \subset \R^d$ ($d \in \{2, 3\}$) be an open, bounded domain with Lipschitz boundary~$\partial \Omega = \GD \cup \GN$, where~$\GD$ and~$\GN$ are disjoint sets ($\GD \cap \GN = \emptyset$) and denote the parts of the boundary of~$\Omega$ where Dirichlet and Neumann boundary conditions are imposed, respectively. We further assume that~$\GD$ has strictly positive~$(d-1)$-dimensional measure.

Given a symmetric diffusion tensor~$\bk \in L^{\infty}(\Omega)^{d\times d}$ with
\begin{equation}
\label{eq:nondegeneracy}
k^{\star} := \Norm{\bk}{L^{\infty}(\Omega)^{d\times d}} \quad \text{and} \quad k_{\star} := \essinf_{\bx \in \Omega} \lambda_{\min}(\bk(\bx)) > 0,
\end{equation}
a source term~$f \in L^2(\Omega)$, a Dirichlet datum~$\gD \in H^{1/2}(\GD)$, and a Neumann datum~$\gN \in L^2(\GN)$, we consider the following boundary value problem (BVP): find~$u: \Omega \to \R$ such that
\begin{subequations}
\label{eq:model}
\begin{alignat}{3}
\label{eq:model-1}
-\nabla \cdot (\bk \nabla u) & = f & & \quad \text{ in } \Omega, \\
u & = \gD & & \quad \text{ on } \GD, \\
\bk \nabla u \cdot \nOmega & = \gN & & \quad \text{ on } \GN,
\end{alignat}
\end{subequations}
where~$\nOmega$ is the unit normal vector pointing outward~$\Omega$. 
We recall that, since~$\gD \in H^{1/2}(\GD)$, there exists a unique continuous weak solution~$u$ to~\eqref{eq:model} that belongs to the convex space~$H_{\GD} := \{\phi \in H^1(\Omega) \ : \ \phi = \gD \text{ on } \GD\}$.

\paragraph{Novelty}
To the best of our knowledge, this is the first work in which the following aspects of the CDG method have been studied.
\begin{itemize}
    \item We describe the CDG method for variable degrees of approximation and fairly general polytopal meshes.
    
    \item We show that the method is well posed without requiring a further stabilization term provided that the weighted averages in the definition of the numerical fluxes satisfy Assumption~\ref{asm:alpha_F} below.
    
    \item We derive~$hp$-\emph{a priori} error estimates in the energy norm, where we employ the recent approximation results from~\cite{Hewett:2025}.

    \item We discuss how the CDG, LDG, and BR2 methods can be implemented within a unified framework, where the stiffness matrix is assembled without relying on large auxiliary matrices.
\end{itemize}

\paragraph{Notations} 
We shall use the standard notation for Sobolev spaces. 
Given an open set~$\Upsilon \subset \R^d$ ($d = 2, 3$) and~$s>0$, we denote by~$H^s(\Upsilon)$ the corresponding Sobolev space with seminorm~$\Seminorm{\cdot}{H^s(\Upsilon)}$ and norm~$\Norm{\cdot}{H^s(\Upsilon)}$. 
We also denote by~$L^2(\Upsilon)$ the space of Lebesgue square integrable functions in~$\Upsilon$ with norm~$\Norm{\cdot}{L^2(\Upsilon)}$. A superscript~$d$ will be used to identify the spaces of~$d$-vector-valued functions.

\paragraph{Outline}
The remainder of the paper is organized as follows: Section~\ref{sec:method} presents the standard notation on polytopal meshes and the CDG method. Moreover, the existence of a discrete solution is proven in Section~\ref{sec:existence}. In Section~\ref{sec:convergence}, we derive an \textit{a-priori} error estimate, 
while Section~\ref{sec:comp_aspects} 
discusses computational aspects and describes fast algorithms for the CDG, LDG, and BR2 methods. In Section~\ref{sec:results}, we carry out some numerical experiments aimed at validating our theoretical results and comparing the CDG method with the LDG and BR2 methods in terms of computational efficiency and stiffness matrix properties. Finally, in Section~\ref{sec:conclusion}, we draw some conclusions and discuss future developments.
\section{Description of the method}
\label{sec:method}
In Section~\ref{sec:meshes-and-DG}, we introduce some notation for polytopal meshes and some standard DG operators. The CDG method is described in Section~\ref{sec:CDG}, and the reduced formulation used in the analysis is derived in Section~\ref{sec:reduced-formulation}. Finally, the existence of a discrete solution is proven in Section~\ref{sec:existence}.
\subsection{Polytopal meshes and DG notations}
\label{sec:meshes-and-DG}
Let~$\{\Omegah\}_{h > 0}$ be a family of polytopal partitions of~$\Omega$, where the subscript~$h$ stands for the meshsize of~$\Omegah$. More precisely, we set~$h := \max_{K \in \Omegah} \hK$, where~$\hK$ denotes the diameter of the element~$K \in \Omegah$. 
We call ``mesh facets" the~$(d-1)$-dimensional facets with positive measure lying on a~$(d-1)$-dimensional hyperplane that compose either the nonempty intersection~$K_1 \cap K_2$ for two elements~$K_1, K_2 \in \Omegah$, or the nonempty intersection~$K \cap \partial \Omega$ for~$K \in \Omegah$.
The set of mesh facets of~$\Omegah$ is denoted by~$\Fh = \FhI \cup \FhD \cup \FhN$, where~$\FhI$ is the set of interior facets of~$\Omegah$, and~$\FhD$ and~$\FhN$ are the sets of facets lying on~$\GD$ and~$\GN$, respectively. For each~$F \in \FhI$, let~$\nF$ denote one of the two~$d$-dimensional unit normal vectors to~$F$. 
Moreover, for each~$F \in \FhD \cup \FhN$, we set~$\nF := \nOmega$. Finally, we denote by~$\FK$ and~$\NK$ the sets of facets and neighbors of~$K$, respectively.

Let~$\p = (\pK)_{K \in \Omegah}$ be a vector that assigns a degree of approximation~$\pK \geq 1$ to each element~$K \in \Omegah$. 
We define the following piecewise polynomial spaces:
\begin{equation*}
\Vhp := \prod_{K \in \Omegah} \Pp{\pK}{K} \quad \text{ and } \quad \Mhp := \prod_{K \in \Omegah} \Pp{\pK}{K}^d,
\end{equation*}
where~$\Pp{\pK}{K}$ denotes the space of polynomials defined on~$K$ of degree at most~$\pK$. We further define the following broken Sobolev spaces: given~$s > 0$,
\begin{equation*}
H^s(\Omegah) := \big\{\phi \in L^2(\Omega) \ : \ \phi_{|_K} \in H^s(K) \text{ for all~$K \in \Omegah$} \big\}.
\end{equation*}

We employ standard notation for weighted averages~($\mvl{\cdot}_{\alpha_F}$) and normal jumps ($\jump{\cdot}$) of piecewise smooth scalar-valued ($v$) and~$d$-vector-valued~($\br$) functions: given~a facet~$F \in \FhI$ shared by two elements~$K_1$ and~$K_2$ in~$\Omegah$ with~$\nF = \nKo$, and a prescribed weighted-average parameter~$\alpha_F \in [0, 1]$, we define 
\begin{alignat*}{3}
\mvl{v}_{\alpha_F} := (1 - \alpha_F) v_{|_{K_1}} + \alpha_F v_{|_{K_2}}, & & \qquad \jump{v} & := (v_{|_{K_1}} - v_{|_{K_2}})\nF, \\
\mvl{\br}_{1-\alpha_F} := 
\alpha_F \br_{|_{K_1}} + 
(1 - \alpha_F) \br_{|_{K_2}}, & & \qquad 
\jump{\br} & := (\br_{|_{K_1}} - \br_{|_{K_2}}) \cdot \nF.
\end{alignat*}

We define the local lifting operator~$\LF : L^2(F)^d \to \Mhp$ as follows:
\begin{alignat}{3}
\label{eq:local-lifting-internal}
\int_{\Omega} \LF(\vphi) \cdot \rh \dx & = \int_F \vphi \cdot \mvl{\rh}_{1 - \alpha_F} \dS & & \qquad \forall \rh \in \Mhp.
\end{alignat}
Similarly, for each boundary facet~$F \in \FhD$, we define the local lifting operator~$\LD : L^2(F) \to \Mhp$ as
\begin{equation}
\label{eq:local-lifting-bndry}
\int_{\Omega} \LD(\phi) \cdot \rh \dx = \int_F \phi \rh \cdot \nOmega \dS  \qquad \forall \rh \in \Mhp.
\end{equation}
Moreover, we define the global lifting operators~$\LhD : H^{1/2 + \varepsilon}(\Omegah) \to \Mhp$ and~$\Lh : H^{1/2 + \varepsilon}(\Omegah) \to \Mhp$ with~$\varepsilon > 0$ by
\begin{equation}
\label{eq:global-liftings}
     \LhD \vh := \sum_{F \in \FhD} \LD (\vh{}_{|_{F}}) \quad \text{ and } \quad \Lh \vh := \sum_{F \in \FhI} \LF (\jump{\vh}) + \LhD \vh .
\end{equation}

We further denote by~$\Pih : L^2(\Omega)^d \to \Mhp$ the~$L^2(\Omega)^d$-orthogonal projection operator onto~$\Mhp$, and by~$\nablah : H^1(\Omegah) \to L^2(\Omega)^d$ the piecewise gradient operator defined on~$\Omegah$. Finally, we shall use the following short-hand notation for sums of integrals over the facets of~$\Omegah$: for~$\star \in \{\mathcal{I}, \mathcal{D}, \mathcal{N}\}$,
\begin{equation*}
    \int_{\Fhs} \phi \dS := \sum_{F \in \Fhs} \int_F \phi \dS.
\end{equation*}

\subsection{Compact discontinuous Galerkin method}
\label{sec:CDG}
In the spirit of~\cite{Perugia_Schotzau:2002} (see also~\cite[\S2.2]{Periaire_Persson:2008}), we introduce the auxiliary variables~$\q = -\nabla u$ and~$\bsigma = \bk \q$, and rewrite model~\eqref{eq:model} as follows:
\begin{alignat*}{3}
\q & = - \nabla u & & \quad \text{ in } \Omega, \\
\bsigma & = \bk \q & & \quad \text{ in } \Omega, \\
\nabla \cdot \bsigma & = f & & \quad \text{ in } \Omega, \\
u & = \gD & & \quad \text{ on } \GD, \\
\bsigma \cdot \nOmega & = -\gN & & \quad \text{ on } \GN.
\end{alignat*}

As in the unified framework of DG methods in~\cite{Arnold_Brezzi_Cockburn_Marini:2001}, we consider the following discrete variational formulation: find~$(\uh,\, \qh,\, \sigmah) \in \Vhp \times \Mhp \times \Mhp$ such that, for each element~$K \in \Omegah$, there hold
\begin{subequations}
\label{eq:flux-formulation}
\begin{alignat}{3}
\label{eq:qh-equation}
\int_K \qh \cdot \rh \dx & = - \int_{\partial K} \uflux \rh \cdot \nK \dS + \int_{K} \uh \nabla \cdot \rh \dx  & & \qquad \forall \rh \in \Mhp,\\
\label{eq:sigmah-equation}
\int_K \sigmah \cdot \sh \dx & = \int_K \bk \qh \cdot \sh \dx & & \qquad \forall \sh \in \Mhp,\\
\label{eq:uh-equation}
\int_{\partial K} \vh \sflux \cdot \nK \dS - \int_{K} \sigmah \cdot \nabla \vh \dx & = \int_{K} f \vh \dx & & \qquad \forall \vh \in \Vhp,
\end{alignat}
\end{subequations}
where the \emph{numerical fluxes} $\uflux$ and~$\sflux$ are approximations of the traces of~$u$ and~$\bsigma$ on~$\Fh$, and they characterize the DG method.

For the CDG method (see~\cite[\S3]{Periaire_Persson:2008}), we define the numerical fluxes %
on each facet~$F \in \Fh$ as
\begin{alignat}{3}
\label{eq:CDG-fluxes}
\uflux & := 
\begin{cases}
\mvl{\uh}_{\alpha_F} & \text{ if~$F \in \FhI$}, \\
\gD & \text{ if~$F \in \FhD$}, \\
\uh & \text{ if~$F \in \FhN$},
\end{cases}
\qquad \sflux & := 
\begin{cases}
\mvl{\sigmah^F}_{1-\alpha_F}
& \text{ if~$F \in \FhI$},\\
\sigmah^F 
& \text{ if~$F \in \FhD$}, \\
-\gN \nOmega & \text{ if~$F \in \FhN$},
\end{cases}
\end{alignat}
where~$\sigmah^F \in \Mhp$ is an auxiliary function defined below and, %
for each~$F \in \FhI$, the weighted-average parameter~$\alpha_F$ is either~$0$ or~$1$. 

For variable degrees of approximation, the choice~$\alpha_F \in \{0, 1\}$, which corresponds to choosing a one-sided trace, leads to the following assumption.

\begin{assumption}[Weighted-average parameters]
\label{asm:alpha_F}
For any interior facet~$F \in \FhI$ shared by two elements~$K_1$ and~$K_2$ in~$\Omegah$ with~$p_{K_1} > p_{K_2}$, the weight parameter~$\alpha_F$ is chosen such that
\begin{equation*}
    \mvl{\sigmah^F}_{1 - \alpha_F} = \sigmah^F{}_{|_{K_1}}.
\end{equation*}
More precisely, for neighboring elements with different degrees of approximation, we set~$\alpha_F$ so that the trace of~$\sigmah^F$ is taken from the element with higher degree.
\end{assumption}

The previous assumption is made to guarantee the coercivity of the bilinear form of the CDG method (see Lemma~\ref{lemma:coercivity-Ah} and Section~\ref{sec:choice-alpha-p-variable} below).
Consequently, for each~$K \in \Omegah$, we define
\begin{alignat}{3}
\label{def:FKout}
\FKout & := \big\{F \in \FK  \ : \ F \in  \FhI \text{ and } \mvl{\cdot}_{1 - \alpha_F} = (\cdot)_{|_{K}}\big\} \cup \big\{F \in \FK \ : \ F \in \FhD \big\}, \\
\label{def:NKout}
\NKout & := \big\{\widetilde{K} \in \Omegah \ : \ %
\FKout \cap \mathcal{F}_{\widetilde{K}} \neq \emptyset \big\},
\end{alignat}
In other words, $\FKout$ is the set of facets of~$K$ such that the support of~$\LF(\cdot)$ or~$\LD(\cdot)$ is~$K$. In addition, for each~$F \in \FhI \cup \FhD$, we denote by~$K_F \in \Omegah$ the only element such that~$F \in \FKFout$, and set~
\begin{equation}
\label{def:NF}
\NF := \card(\FKFout).
\end{equation}
These definitions are illustrated in Figure~\ref{fig:FKout}.
\begin{figure}[t!]
	\centering
	{\includegraphics[width=\textwidth]{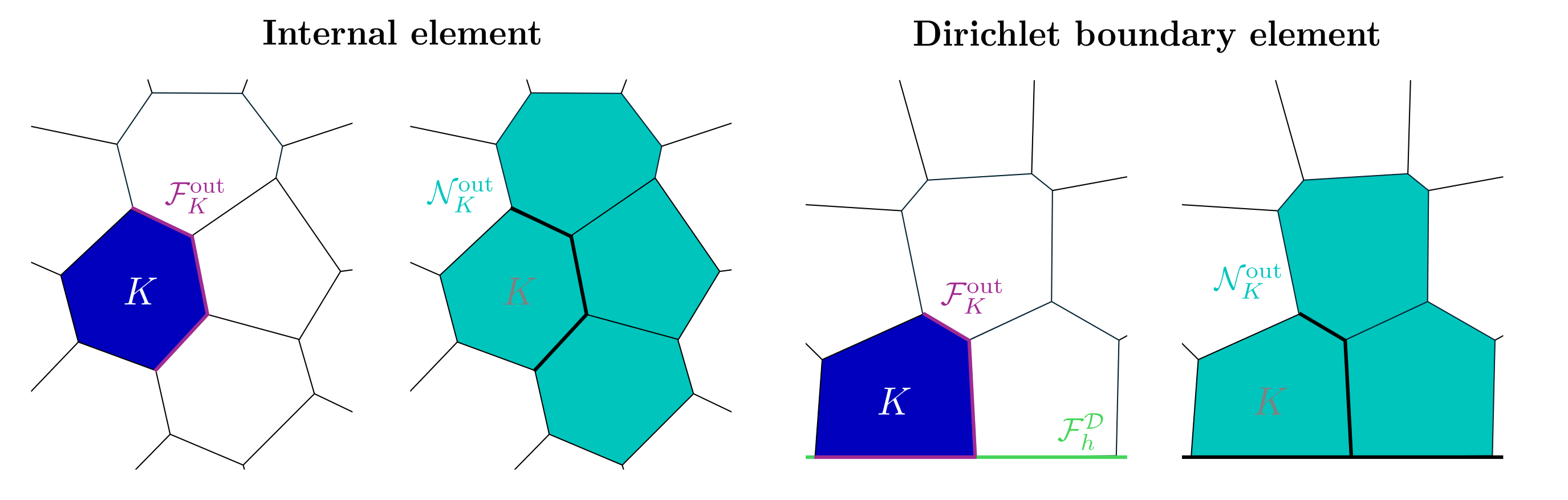}}
	\caption{Schematical representation of the sets $\mathcal{F}_K^\mathrm{out}$ (\textcolor{edgepurple}{\textbf{purple}} on the left) and $\mathcal{N}_K^\mathrm{out}$ (\textcolor{deepaqua}{\textbf{light blue}} on the right) assuming $\alpha_F=1$ for $\beta_F = \boldsymbol{n}_F\cdot[1,0]^{\top} \geq 1$. Internal element case (left panel) and Dirichlet boundary element case (right panel).} 
	\label{fig:FKout}
\end{figure}

As for the auxiliary local functions~$\sigmah^F \in \Mhp$, we first assign a parameter~$\xiF > 0$ to each facet~$F \in \FhI \cup \FhD$. Then, for each interior facet~$F \in \FhI$, we define~$\sigmah^F$ as the solution to the following local problem:
\begin{alignat}{3}
\nonumber
\int_{\Omega} \sigmah^F \cdot \rh \dx & = -\int_{\Omega} \bk \nablah \uh \cdot \rh \dx 
+ \xiF \int_F \jump{\uh} \cdot \mvl{\Pi_h (\bk \rh)}_{1-\alpha_F} \dS \\
\label{eq:sigmaF-interior}
& = -\int_{\Omega} \bk \big(\nablah \uh - \xiF \LF(\jump{\uh}) \big) \cdot \rh \dx & & \qquad \forall \rh \in \Mhp.
\end{alignat}
Analogously, for each boundary facet~$F \in \FhD$, $\sigmah^F \in \Mhp$ is defined as the solution to 
\begin{alignat}{3}
\nonumber
\int_{\Omega} \sigmah^F \cdot \rh \dx & = -\int_{\Omega}  \bk \nablah \uh \cdot \rh \dx + \xiF \int_F (\uh - \gD) \Pih(\bk \rh) \cdot \nOmega \dS \\
\label{eq:sigmaF-bndary}
& = -\int_{\Omega} \bk (\nablah \uh - \xiF\LD(\uh{}_{|_F} - \gD)) \cdot \rh \dx \qquad\qquad \qquad \forall \rh \in \Mhp.
\end{alignat}
Therefore, from~\eqref{eq:sigmaF-interior} and~\eqref{eq:sigmaF-bndary}, we can deduce the following %
expressions for~$\sigmah^F$:
\begin{equation}
\label{eq:def-sigmahF}
\sigmah^F = \begin{cases}
-\Pih(\bk (\nablah \uh - \xiF \LF(\jump{\uh}))) & \text{ if~$F \in \FhI$},\\
-\Pih(\bk (\nablah \uh - \xiF \LD(\uh{}_{|_F} - \gD))) & \text{ if~$F \in \FhD$}.
\end{cases}
\end{equation}

We make the following additional assumption on 
the parameters~$\xiF$, which we use in Lemma~\ref{lemma:CDG-norm} to show that the seminorm induced by the bilinear form of the CDG method is a norm in~$\Vhp$. 

\begin{assumption}[Choice of~$\xiF$]
\label{asm:xiF}
We assume that there is a positive constant~$\gamma$ independent of~$h$ such that
$$0 < \frac{\NF}{\xiF} \le \gamma < 1 \qquad  \forall F \in \FhI \cup \FhD,$$ 
where~$\NF$ are the constants defined in~\eqref{def:NF}, which we assume to be uniformly bounded.
\end{assumption}

\begin{remark}[Relation with the LDG method]
The LDG method is obtained by modifying the definition of the numerical flux~$\sflux$ as follows:
\begin{equation*}
\sflux := \begin{cases}
\mvl{\sigmah}_{1-\alpha_F} + \eta_F \jump{\uh}    & \text{if~$F \in \FhI$},\\
\sigmah + \eta_F (\uh - \gD) & \text{if~$F \in \FhD$}, \\
-\gN \nOmega & \text{if~$F \in \FhN$},
\end{cases}
\end{equation*}
where, for each~$F \in \FhI$, $\alpha_F \in [0, 1]$, and the piecewise-constant stabilization function~$\eta_F \in L^{\infty}(\FhI \cup \FhD)$ satisfies~$\eta_{\star} := \essinf_{\FhI \cup \FhD} \eta_F > 0$.

Analogously to as in~\eqref{eq:def-sigmahF}, for the LDG method, the function~$\sigmah$ can be explicitly expressed as (see also equation~\eqref{eq:qh-sigmah-identity} below) 
$$\sigmah = -\Pih\big(\bk (\nablah \uh - \Lh \uh + \LhD (\gD))\big).$$ 
The implicit dependence in the LDG method of the numerical flux~$\sflux$ on  the global lifting~$\Lh \uh$ leads to a larger stencil. 

The analysis of the LDG method on polytopal meshes was carried out in~\cite{Ye_Zhang_Zhu:2022} (for the~$h$ version) and in~\cite{Gomez-Perinati-Stocker:2026} (for the~$hp$ version in a space--time setting). 
\eremk
\end{remark}

\begin{remark}[Relation with the BR2 method]
The BR2 method is obtained by setting~$\alpha_F = 1/2$ in~\eqref{eq:CDG-fluxes} for all the internal facets~$F \in \FhI$ (see~\cite[\S3]{Brezzi_etal:2000} and~\cite[\S3.2]{Bassi_etal:2012}). For this choice of the weighted-average parameters, the support of~$\sigmah^F$ is the union of both elements sharing the facet~$F$, which leads to an increase in the computational cost compared to the CDG method. Namely, for each internal face $F$, we need to compute two interface integrals instead of one.

The analysis in~\cite[\S IV]{Brezzi_etal:2000} of the BR2 method was extended to agglomerated meshes in two-dimensional domains in~\cite[\S3.3]{Bassi_etal:2012}. Assumption~\ref{asm:xiF} on the
choice of the parameters~$\xiF$ is inspired by the one made in~\cite[Thm.~1]{Bassi_etal:2012}, which allows for a milder dependence of the constants in the a priori error estimates on the maximum number of facets of the elements in~$\Omegah$. 
\eremk
\end{remark}

\begin{remark}[The CDG2 version]
An alternative version (CDG2) of the CDG method was introduced in~\cite{Brdar_Dedner_Klofkorn:2012} in the context of nonlinear convection--diffusion--reaction equations. For the model problem~\eqref{eq:model}, it corresponds to the following choice of the numerical fluxes in the abstract variational formulation~\eqref{eq:flux-formulation}:
\begin{alignat*}{3}
\uflux & := 
\begin{cases}
\mvl{\uh}_{1/2} & \text{ if~$F \in \FhI$},\\ %
\gD & \text{ if~$F \in \FhD$}, \\
\uh & \text{ if~$F \in \FhN$},
\end{cases}\\
\medskip
\sflux & := 
\begin{cases}
-\mvl{\Pi_h(\bk \nablah \uh)}_{1/2} + \xiF \mvl{\Pi_h(\bk \LF(\jump{\uh})}_{1- \alpha_F} %
& \text{ if~$F \in \FhI$,}\\ %
-\Pih(\bk (\nablah \uh - \xiF \LD(\uh{}_{|_F} - \gD)))_{|_{F}} %
& \text{ if~$F \in \FhD$}, \\
-\gN \nOmega & \text{ if~$F \in \FhN$},
\end{cases}
\end{alignat*}
where, for each~$F \in \FhI$, $\alpha_F \in \{0, 1\}$. 
Thus, this version considers standard (arithmetic) averages except for the term in the definition in~\eqref{eq:def-sigmahF} of~$\sigmah^F$ involving the local lifting~$\LF(\jump{\uh})$.
\eremk
\end{remark}

\begin{remark}[Absence of a stabilization term]
\label{rem:stabilization}
Motivated by~\cite[Thm.~2]{Brdar_Dedner_Klofkorn:2012} for standard meshes, which states that it is not necessary to include a stability term in the CDG method if the parameters~$\xiF$ are chosen appropriately, we have not introduced any stability term in the definition of~$\sflux$ in~\eqref{eq:CDG-fluxes} (cf.~\cite[\S3]{Periaire_Persson:2008} and~\cite[\S2.1]{Brdar_Dedner_Klofkorn:2012}). 
This allows us to avoid computing an additional discrete operator in the method. Moreover, the inclusion of a stability term would not solve the issue of the %
dependence of the error estimates on the maximum %
value of~$\NF$, unless the stability parameter were taken ``large enough", as in the IPDG method. 
\eremk
\end{remark}

\subsection{Reduced formulation}
\label{sec:reduced-formulation}
For the analysis, we proceed as in~\cite{Periaire_Persson:2008} and~\cite{Brdar_Dedner_Klofkorn:2012}, by first rewriting the method as a variational problem involving only the scalar unknown~$\uh$. 

Summing~\eqref{eq:qh-equation} over all the elements of the mesh~$\Omegah$, integrating by parts, and using the average--jump identity
\begin{equation*}
    \mvl{\vh}_{\alpha_F} \jump{\rh} + \jump{\vh} \cdot \mvl{\rh}_{1 - \alpha_F} = \jump{\vh \rh} \qquad \forall (\vh, \rh) \in \Vhp \times \Mhp,
\end{equation*}
and the definition of the global lifting operators, we get
\begin{alignat*}{3}
\nonumber
\int_{\Omega} \qh \cdot \rh \dx & = \sum_{K \in \Omegah} \int_K \uh \nabla \cdot \rh \dx 
-\int_{\FhI} \mvl{\uh}_{\alpha_F} \jump{\rh} \dS - \int_{\FhD} \gD \rh \cdot \nOmega \dS - \int_{\FhN} \uh \rh \cdot \nOmega \dS \\
\nonumber
& = - \int_{\Omega} \nablah \uh \cdot \rh \dx + \int_{\FhI} \big( \jump{\uh \rh} - \mvl{\uh}_{\alpha_F} \jump{\rh}\big) \dS + \int_{\FhD}  \big(\uh - \gD \big)\rh  \cdot \nOmega \dS \\
\nonumber
& = - \int_{\Omega} \nablah \uh \cdot \rh \dx + \int_{\FhI} \jump{\uh} \cdot \mvl{\rh}_{1 - \alpha_F} \dS + \int_{\FhD} (\uh - \gD) \rh \cdot \nOmega \dS \\
\nonumber
& = - \int_{\Omega} \nablah \uh \cdot \rh \dx + \sum_{F \in \FhI} \int_{\Omega} \LF(\jump{\uh}) \cdot \rh \dx + \sum_{F \in \FhD} \int_{\Omega} \LD(\uh{}_{|_F}  - \gD) \cdot \rh \dx \\
& = - \int_{\Omega} \big(\nablah \uh  - \Lh \uh + \LhD(\gD) \big) \cdot \rh \dx,
\end{alignat*}
which leads to the following expressions for~$\qh$ and~$\sigmah$:
\begin{equation}
\label{eq:qh-sigmah-identity}
\qh = - (\nablah \uh - \Lh \uh + \LhD(\gD)) \quad \text{ and } \quad \sigmah = \Pih(\bk \qh).
\end{equation}

Summing the left-hand side of~\eqref{eq:uh-equation} over all the elements in~$\Omegah$, substituting the definition of the numerical flux~$\sflux$ in~\eqref{eq:CDG-fluxes}, and using the identities in~\eqref{eq:qh-sigmah-identity}, the expressions in~\eqref{eq:def-sigmahF} for~$\sigmah^F$, and the definition of the local and global lifting operators in~\eqref{eq:local-lifting-internal}, \eqref{eq:local-lifting-bndry}, and~\eqref{eq:global-liftings}, we obtain
\begin{align*}
\sum_{K \in \Omegah} \Big( - & \int_K \sigmah \cdot \nabla \vh \dx +  \int_{\partial K} \vh \sflux \cdot \nK \dS \Big) \\
& = - \int_{\Omega} \sigmah \cdot \nablah \vh \dx + \int_{\FhI} \mvl{\sigmah^F}_{1 - \alpha_F} \cdot \jump{\vh}\dS + \int_{\FhD} \vh \sigmah^F \cdot \nOmega  \dS  - \int_{\FhN} \gN \vh \dS \\
& =  \int_{\Omega} \bk \big(\nablah \uh - \Lh \uh \big) \cdot\nablah \vh \dx + \sum_{F \in \FhI} \int_{\Omega} \sigmah^F \cdot \LF(\jump{\vh}) \dx \\
& \quad + \sum_{F \in \FhD} \int_{\Omega} \sigmah^F \cdot \LD(\vh{}_{|_F}) \dx - \int_{\FhN} \gN \vh \dS + \int_{\FhD} \gD( \bk \nablah \vh \cdot \nOmega) \dS \\
& = \int_{\Omega} \bk \big(\nablah \uh - \Lh \uh \big)\cdot \nablah \vh \dx - \sum_{F \in \FhI} \int_{\Omega} \bk \big(\nablah \uh - \xiF \LF(\jump{\uh}) \big) \cdot \LF (\jump{\vh}) \dx \\
& \quad - \sum_{F \in \FhD} \int_{\Omega} \bk( \nablah \uh - \xiF \LD(\uh{}_{|_F} - \gD)) \cdot \LD(\vh{}_{|_F}) \dx \\
& \quad - \int_{\FhN} \gN \vh \dS + \int_{\FhD} \gD(\bk \nablah \vh \cdot \nOmega) \dS \\
& = \int_{\Omega} \bk(\nablah \uh - \Lh \uh) \cdot \nablah \vh \dx - \int_{\Omega} \bk \nablah \uh \cdot \Lh \vh \dx \\
& \quad + \sum_{F \in \FhI} \xiF \int_{\Omega}  \bk \LF(\jump{\uh}) \cdot \LF(\jump{\vh}) \dx 
+ \sum_{F \in \FhD} \xiF \int_{\Omega} \bk \LD(\uh{}_{|_F}) \cdot \LD (\vh {}_{|_F}) \dx  \\
& \quad - \int_{\FhN} \gN \vh \dS + \int_{\FhD} \gD(\bk (\nablah \vh - \xiF \LhD \vh) \cdot \nOmega) \dS.
\end{align*}

Therefore, we get the following reduced formulation in terms of the primal variable only: find~$\uh \in \Vhp$ such that
\begin{equation}
\label{eq:CDG-variational}
\Ah(\uh, \vh) = \lh(\vh) \qquad \forall \vh \in \Vhp,
\end{equation}
where the bilinear form~$\Ah : \Vhp \times \Vhp \to \R$ and the linear functional~$\lh : \Vhp \to \R$ are given by
\begin{subequations}
\begin{alignat}{3}
\nonumber
\Ah(\uh, \vh) & := \int_{\Omega} \bk(\nablah \uh - \Lh \uh) \cdot \nablah \vh \dx - \int_{\Omega} \bk \nablah \uh \cdot \Lh \vh \dx  \\
\label{eq:def-Ah}
& + \sum_{F \in \FhI} \xiF \int_{\Omega} \bk \LF(\jump{\uh}) \cdot \LF(\jump{\vh}) \dx 
+ \sum_{F \in \FhD} \xiF \int_{\Omega} \bk\LD(\uh{}_{|_F}) \cdot \LD (\vh {}_{|_F}) \dx, \\
\label{eq:def-lh}
\lh(\vh) & := \int_{\Omega} f \vh \dx + \int_{\FhN} \gN \vh \dS - \int_{\FhD} \gD(\bk (\nablah \vh - \xiF \LhD \vh) \cdot \nOmega) \dS.
\end{alignat}
\end{subequations}

\subsection{Well-posedness}
\label{sec:existence}
We define the following mesh-dependent seminorm in $\Vhp + H^1(\Omegah)$:
\begin{equation*}
\Tnorm{v}{\CDG}^2 := \Norm{\sqrt{\bk} \nablah v}{L^2(\Omega)^d}^2 + \sum_{F \in \FhI} \xiF \Norm{\sqrt{\bk} \LF(\jump{v})}{L^2(\Omega)^d}^2 + \sum_{F \in \FhD} \xiF \Norm{\sqrt{\bk} \LD(v_{|_F})}{L^2(\Omega)^d}^2,
\end{equation*}
where~$\sqrt{\bk}$ is the symmetric positive definite matrix such that~$\sqrt{\bk} \cdot \sqrt{\bk} = \bk$.

The existence and uniqueness of a discrete solution to~\eqref{eq:CDG-variational} is a consequence of the following lemmas, where we show that~$\Tnorm{\cdot}{\CDG}$ is a norm in~$\Vhp$ provided that Assumption ~\ref{asm:alpha_F} holds, and that the bilinear form~$\Ah(\cdot, \cdot)$ is coercive with respect to~$\Tnorm{\cdot}{\CDG}$.
\begin{lemma}
\label{lemma:CDG-norm}
Under Assumption~\ref{asm:alpha_F}, the seminorm~$\Tnorm{\cdot}{\CDG}$ is a norm in the discrete space~$\Vhp$.
\end{lemma}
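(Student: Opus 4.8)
The plan is to establish definiteness of $\Tnorm{\cdot}{\CDG}$ on $\Vhp$; positivity, homogeneity, and the triangle inequality are immediate from its definition as a weighted sum of squared $L^2(\Omega)^d$-norms, so the only thing to prove is that $\Tnorm{v}{\CDG} = 0$ forces $v = 0$ for $v \in \Vhp$. First I would note that, since the three contributions to $\Tnorm{v}{\CDG}^2$ are nonnegative, each must vanish separately. Because $k_{\star} > 0$ in~\eqref{eq:nondegeneracy}, the matrix $\sqrt{\bk}$ is invertible almost everywhere, so $\Norm{\sqrt{\bk} \nablah v}{L^2(\Omega)^d} = 0$ yields $\nablah v = 0$, i.e.\ $v$ is constant on every element $K \in \Omegah$; likewise $\LF(\jump{v}) = 0$ for every $F \in \FhI$ and $\LD(v_{|_F}) = 0$ for every $F \in \FhD$.

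Next I would upgrade ``piecewise constant'' to ``globally constant'' using the vanishing interior liftings. Fix $F \in \FhI$ shared by $K_1$ and $K_2$. Under Assumption~\ref{asm:alpha_F} (in particular $\alpha_F \in \{0,1\}$), the weighted average $\mvl{\rh}_{1 - \alpha_F}$ in the defining identity~\eqref{eq:local-lifting-internal} of $\LF$ reduces to the one-sided trace $\rh_{|_{\KF}}$ from the single element $\KF$ with $F \in \FKFout$, and $\LF(\cdot)$ is supported on $\KF$. Testing the identity $\int_\Omega \LF(\jump{v}) \cdot \rh \dx = \int_F \jump{v} \cdot \rh_{|_{\KF}} \dS = 0$ with $\rh \in \Mhp$ equal to the constant vector $\jump{v} = (v_{|_{K_1}} - v_{|_{K_2}}) \nF$ on $\KF$ and zero elsewhere (admissible since $\pKF \ge 1$, so constants lie in $\Pp{\pKF}{\KF}^d$) gives $(v_{|_{K_1}} - v_{|_{K_2}})^2 |F| = 0$, hence $v_{|_{K_1}} = v_{|_{K_2}}$. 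As $\Omega$ is connected, the facet-adjacency graph of $\Omegah$ is connected, so propagating this equality across all interior facets yields $v \equiv c$ for some $c \in \R$.

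Finally I would use the Dirichlet liftings to pin down $c = 0$. Since $\GD$ has strictly positive $(d-1)$-dimensional measure, $\FhD \neq \emptyset$; pick $F \in \FhD$ with host element $\KF$. From $\LD(v_{|_F}) = 0$ and~\eqref{eq:local-lifting-bndry}, we have $\int_F v_{|_F}\, \rh \cdot \nOmega \dS = 0$ for all $\rh \in \Mhp$. Testing with $\rh$ equal to the constant vector $\nOmega$ (a fixed unit vector, since $F$ is planar) on $\KF$ and zero elsewhere gives $c\,|F| = 0$, so $c = 0$ and thus $v = 0$, as required.

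The main obstacle is not any single estimate but the bookkeeping of the one-sided structure induced by $\alpha_F \in \{0,1\}$: one must verify that $\KF$ is exactly the support of $\LF(\cdot)$ and that its trace on $F$ is rich enough to recover the constant jump, which is precisely where $\pKF \ge 1$ and Assumption~\ref{asm:alpha_F} enter. The connectivity step relies on $\Omega$ being connected and on $\FhD$ being nonempty, both guaranteed by the standing hypotheses on the domain.
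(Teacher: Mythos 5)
Your proof is correct, but it takes a slightly different (and in one respect more elementary) route than the paper's. The paper first shows that the full polynomial jump $\jump{\vh}$ vanishes on each interior facet, \emph{before} invoking $\nablah \vh = 0$: this requires constructing a test function $\rh \in \Mhp$, supported on the element $K_F$ from which the one-sided trace is taken, whose trace on $F$ equals the degree-$\max\{p_{K_1},p_{K_2}\}$ polynomial $\jump{\vh}$; it is exactly here that the ``higher-degree side'' requirement of Assumption~\ref{asm:alpha_F} is essential, and this same extension-polynomial construction is reused later in the proof of Lemma~\ref{lemma:jump-lifting}. You instead use $\nablah v = 0$ first, so that $\jump{v}$ is a constant vector on each facet, and then only a \emph{constant} test function supported on $K_F$ is needed; since constants belong to $\Pp{\pKF}{\KF}^d$ regardless of which neighbor has the higher degree, your argument for this particular lemma only uses that the average $\mvl{\cdot}_{1-\alpha_F}$ is a well-defined one-sided trace (i.e.\ $\alpha_F \in \{0,1\}$), not the specific orientation prescribed by Assumption~\ref{asm:alpha_F}. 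What your approach buys is simplicity and explicitness (including the clean treatment of the Dirichlet facets via the constant test function $\nOmega$ and the explicit connectivity argument, which the paper leaves implicit); what the paper's approach buys is the stronger intermediate conclusion $\jump{\vh}=0$ as a polynomial identity and a construction that is needed anyway in the subsequent jump--lifting estimates. Both proofs are complete for the statement at hand.
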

\begin{proof}
Let~$\vh \in \Vhp$ with~$\Tnorm{\vh}{\CDG} = 0$. Due to the nondegeneracy in~\eqref{eq:nondegeneracy} of~$\bk$, we deduce that~$\LF(\jump{\uh}) = 0$ for all~$F \in \FhI$, and~$\LD( \vh{}_{|_{F}}) = 0$ for all~$F \in \FhD$. 
Moreover, from the definition in~\eqref{eq:local-lifting-internal} of~$\LF(\cdot)$ and Assumption~\ref{asm:alpha_F}, for each~$F \in \FhI$ shared by two elements~$K_1,\, K_2 \in \Omegah$ with~$p_{K_1} > p_{K_2}$, we get
\begin{equation}
\label{eq:vanishing-jumps-interior}
\int_F \jump{\vh} \cdot \mvl{\rh}_{1 - \alpha_F} \dS = \int_F \jump{\vh} \cdot \rh{}_{|_{K_1}} \dS = 0 \quad \forall \rh \in \Mhp. 
\end{equation}
In order to show that~$\jump{\vh} = 0$ on~$F$, it is enough to choose~$\rh$ in~\eqref{eq:vanishing-jumps-interior} as a polynomial whose restriction to~$F$ is equal to~$\jump{\vh}$, is constant along the orthogonal direction to~$F$, and has support on~$K_1$. Since~$\jump{\vh}$ is a $d$-vector-valued polynomial of degree~$\max\{p_{K_1}, p_{K_2}\} = p_{K_1}$ defined on~$F$, such a choice is only possible because~$\alpha_F$ was chosen in Assumption~\ref{asm:alpha_F} such that~$\mvl{\rh}_{1-\alpha_F}$ reduces to the trace of~$\rh$ from the element of higher degree. The choice of~$\alpha_F$ is not relevant when~$p_{K_1} = p_{K_2}$; in that case, we can always show that~$\jump{\vh} = 0$. 
Using the definition in~\eqref{eq:local-lifting-bndry} of~$\LD(\cdot)$, one can prove that~$\vh = 0$ on~$\GD$. 
In addition, $\nablah \vh = 0$ in~$\Omega$, so we deduce that~$\vh$ is a piecewise constant function on the mesh $\Omegah$. Considering that~$\jump{\vh} = 0$, it follows that~$\vh$ is a constant function on~$\Omega$, and moreover, it has zero trace on~$\GD$. Since~$|\GD| > 0$, we conclude that~$\vh = 0$. 
\end{proof}

\begin{lemma}[Coercivity of~$\Ah$]
\label{lemma:coercivity-Ah}
Under Assumptions~\ref{asm:alpha_F} and~\ref{asm:xiF}, 
there exists a positive constant~$\CA$ independent of the meshsize~$h$, the degree vector~$\p$, and the maximum number of facets of~$\Omegah$ such that
\begin{equation}
\label{eq:coer-A}
\Ah(\vh, \vh) \geq \CA \Tnorm{\vh}{\CDG}^2 \qquad \forall \vh \in \Vhp.
\end{equation}
\end{lemma}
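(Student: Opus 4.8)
The plan is to evaluate $\Ah(\vh, \vh)$ directly from~\eqref{eq:def-Ah} and then to dominate its single indefinite cross term by the two manifestly nonnegative contributions that already constitute $\Tnorm{\vh}{\CDG}^2$. First I would set both arguments equal to $\vh$ and use the symmetry of $\bk$ to merge the two mixed terms, obtaining
\begin{equation*}
\Ah(\vh, \vh) = \Norm{\sqrt{\bk}\nablah \vh}{L^2(\Omega)^d}^2 - 2 \int_{\Omega} \bk \nablah \vh \cdot \Lh \vh \dx + \sum_{F \in \FhI} \xiF \Norm{\sqrt{\bk}\LF(\jump{\vh})}{L^2(\Omega)^d}^2 + \sum_{F \in \FhD} \xiF \Norm{\sqrt{\bk}\LD(\vh_{|_F})}{L^2(\Omega)^d}^2.
\end{equation*}
Abbreviating $T_0 := \Norm{\sqrt{\bk}\nablah \vh}{L^2(\Omega)^d}^2$ and writing $T_F$ for the squared weighted lifting norm attached to each facet, this reads $\Ah(\vh,\vh) = T_0 - 2\int_\Omega \bk\nablah\vh\cdot\Lh\vh\dx + \sum_F \xiF T_F$, so that the two sums reproduce all of $\Tnorm{\vh}{\CDG}^2$ except the gradient term, and only the cross term must be absorbed.

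The structural fact that drives the argument is that, because Assumption~\ref{asm:alpha_F} forces $\alpha_F \in \{0,1\}$, each weighted average $\mvl{\cdot}_{1-\alpha_F}$ in~\eqref{eq:local-lifting-internal} collapses to a one-sided trace; hence each local lifting $\LF(\jump{\vh})$ (resp.\ $\LD(\vh_{|_F})$) is supported on the single element $\KF$ with $F \in \FKFout$. Consequently, expanding $\Lh \vh = \sum_{F \in \FhI}\LF(\jump{\vh}) + \sum_{F \in \FhD}\LD(\vh_{|_F})$, I would rewrite the cross term as a sum over facets of integrals localized to $\KF$, apply the Cauchy--Schwarz inequality in the $\bk$-weighted inner product on each $\KF$, and then split each contribution by Young's inequality with a facet-dependent weight $\varepsilon_F$ into a gradient part on $\KF$ and a lifting part proportional to $T_F$.

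The decisive choice is $\varepsilon_F = c/\NF$ for a constant $c \in (0,1)$ to be fixed. With this choice, summing the gradient contributions and regrouping by element exploits the identity $\sum_{F \in \FKout}\varepsilon_F = c$, which holds precisely because every facet owned by $K$ has $\NF = \card(\FKout)$; this gives $\sum_F \varepsilon_F \Norm{\sqrt{\bk}\nablah\vh}{L^2(\KF)^d}^2 = c\,T_0$, with no residual factor counting facets. I would thus reach
\begin{equation*}
\Ah(\vh, \vh) \ge (1-c)\, T_0 + \sum_{F \in \FhI \cup \FhD}\Big(\xiF - \frac{\NF}{c}\Big) T_F.
\end{equation*}
Invoking Assumption~\ref{asm:xiF}, i.e.\ $\NF \le \gamma \xiF$ with $\gamma < 1$, and taking for instance $c = \sqrt{\gamma}$ makes both coefficients at least $1-\sqrt{\gamma} > 0$, yielding~\eqref{eq:coer-A} with $\CA = 1-\sqrt{\gamma}$, which is independent of $h$, of $\p$, and of the maximal facet count.

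I expect the main obstacle to be organizing the cross term so the coercivity constant is uniform in the number of facets per element: a naive uniform Young parameter would leave a factor $\card(\FKout)$ multiplying $T_0$ and destroy coercivity on agglomerated meshes with many facets. The remedy is exactly the single-element support of the liftings combined with the $\NF$-tuned Young inequality, which is what Assumption~\ref{asm:xiF} is designed to accommodate; the remaining manipulations (Cauchy--Schwarz and regrouping by element) are routine.
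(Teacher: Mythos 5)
Your proof is correct and follows essentially the same route as the paper: both arguments rest on the single-element support of the local liftings (a consequence of $\alpha_F\in\{0,1\}$), the facet count $\NF$, and Assumption~\ref{asm:xiF}. The only difference is cosmetic --- you absorb the cross term facet by facet with Young weights $\varepsilon_F = c/\NF$, whereas the paper applies one global Young parameter $\varepsilon$ and then bounds $\Norm{\sqrt{\bk}\Lh \vh}{L^2(\Omega)^d}^2$ by $\sum_{F}\NF \Norm{\sqrt{\bk}\LF(\jump{\vh})}{L^2(\Omega)^d}^2 + \cdots$ via the elementwise $\ell^1$--$\ell^2$ inequality; both yield the same admissible constant $\CA = 1-\sqrt{\gamma}$ after optimizing the parameter.
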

\begin{proof}
Let~$\vh \in \Vhp$. From the definition in~\eqref{eq:def-Ah} of the bilinear form~$\Ah(\cdot, \cdot)$, and using the symmetry of $\Ah(\cdot, \cdot)$ and the Young inequality, we obtain
\begin{alignat}{3}
\nonumber
\Ah(\vh, \vh) & = \Norm{\sqrt{\bk} \nablah \vh}{L^2(\Omega)^d}^2 - 2 \int_{\Omega} \bk \Lh \vh \cdot \nablah \vh \dx \\
\nonumber
& \quad + \sum_{F \in \FhI} \xiF \Norm{\sqrt{\bk} \LF (\jump{\vh})}{L^2(\Omega)^d}^2  + \sum_{F \in \FhD} \xiF \Norm{\sqrt{\bk} \LD(\vh{}_{|_F})}{L^2(\Omega)^d}^2 \\
\nonumber
& \geq (1 - \varepsilon) \Norm{\sqrt{\bk} \nablah \vh}{L^2(\Omega)^d}^2 - \frac{1}{\varepsilon} \Norm{\sqrt{\bk} \Lh \vh}{L^2(\Omega)^d}^2  \\
\label{eq:lower-bound-Ah}
& \quad + \sum_{F \in \FhI} \xiF \Norm{\sqrt{\bk} \LF (\jump{\vh})}{L^2(\Omega)^d}^2  + \sum_{F \in \FhD} \xiF \Norm{\sqrt{\bk} \LD(\vh{}_{|_F})}{L^2(\Omega)^d}^2.
\end{alignat}

It only remains to bound the second term on the right-hand side of~\eqref{eq:lower-bound-Ah}. Using the definition of the global lifting~$\Lh \vh$, the definition in~\eqref{def:FKout} of the set~$\FKout$, and the standard inequality~$|\boldsymbol{y}|_1 \le \sqrt{n} |\boldsymbol{y}|_2$ for vectors in~$\R^n$, we get
\begin{alignat}{3}
\nonumber
\Norm{\sqrt{\bk} \Lh \vh}{L^2(\Omega)^d}^2 & = \sum_{K \in \Omegah} \int_{K} \bigg(\sum_{F \in \FKout \cap \FhI} \sqrt{\bk} \LF(\jump{\vh}) + \sum_{F \in \FKout \cap \FhD} \sqrt{\bk} \LD(\vh{}_{|_F}) \bigg)^2 \dx \\
\nonumber
& \le \sum_{K \in \Omegah} \NF \Big(\sum_{F \in \FKout \cap \FhI} \Norm{\sqrt{\bk} \LF(\jump{\vh})}{L^2(K)^d}^2 + \sum_{F \in \FKout \cap \FhD} \Norm{\sqrt{\bk} \LD(\vh{}_{|_F})}{L^2(K)^d}^2 \Big) \\
\label{eq:bound-Lh}
& = \sum_{F \in \FhI} \NF \Norm{\sqrt{\bk} \LF(\jump{\vh})}{L^2(\Omega)^d}^2 + \sum_{F \in \FhD} \NF \Norm{\sqrt{\bk} \LD(\vh{}_{|_{F}})}{L^2(\Omega)^d}^2.
\end{alignat}

Combining~\eqref{eq:bound-Lh} with~\eqref{eq:lower-bound-Ah}, and using Assumption~\ref{asm:xiF}, 
it follows that
\begin{alignat*}{3}
\Ah(\vh, \vh) & \geq (1 - \varepsilon) \Norm{\sqrt{\bk} \nablah \vh}{L^2(\Omega)^d}^2 + \sum_{F \in \FhI} \Big(\xiF - \frac{\NF}{\varepsilon} \Big) \Norm{\sqrt{\bk} \LF(\jump{\vh})}{L^2(\Omega)^d}^2 \\
& \quad + \sum_{F \in \FhD} \Big(\xiF - \frac{\NF}{\varepsilon} \Big) \Norm{\sqrt{\bk} \LD(\vh{}_{|_{F}})}{L^2(\Omega)^d}^2 \\
& \geq (1 - \varepsilon) \Norm{\sqrt{\bk} \nablah \vh}{L^2(\Omega)^d}^2 + \sum_{F \in \FhI} \xiF \Big(1 - \frac{\gamma}{\varepsilon} \Big) \Norm{\sqrt{\bk} \LF(\jump{\vh})}{L^2(\Omega)^d}^2 \\
& \quad + \sum_{F \in \FhD} \xiF\Big(1 - \frac{\gamma}{\varepsilon} \Big) \Norm{\sqrt{\bk} \LD(\vh{}_{|_{F}})}{L^2(\Omega)^d}^2.
\end{alignat*}
Therefore, since~$0 < \gamma < 1$, we can choose~$\gamma < \varepsilon < 1$. This completes the proof of~\eqref{eq:coer-A}.
\end{proof}

In the error analysis of next section, we use the following continuity property of the bilinear form~$\Ah(\cdot, \cdot)$, which can be easily obtained using the Cauchy--Schwarz inequality and bound~\eqref{eq:bound-Lh} on~$\Lh \vh$.
\begin{lemma}[Continuity of~$\Ah(\cdot, \cdot)$]
\label{lemma:continuity-Ah}
Under Assumption~\ref{asm:alpha_F}, %
it holds
\begin{equation*}
\Ah(u, v) \le 2 \Tnorm{u}{\CDG} \Tnorm{v}{\CDG} \qquad \forall u, v \in H^1(\Omegah).
\end{equation*}
\end{lemma}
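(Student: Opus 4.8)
The plan is to expand $\Ah(u,v)$ according to its definition in~\eqref{eq:def-Ah} into the three volume terms and the two lifting sums, and to estimate each contribution by the weighted Cauchy--Schwarz inequality
\begin{equation*}
\int_\Omega \bk\, \boldsymbol{a}\cdot\boldsymbol{b}\dx \le \Norm{\sqrt{\bk}\,\boldsymbol{a}}{L^2(\Omega)^d}\,\Norm{\sqrt{\bk}\,\boldsymbol{b}}{L^2(\Omega)^d},
\end{equation*}
which is valid because $\bk$ is symmetric positive definite. To lighten the notation I would set, for $w \in H^1(\Omegah)$,
\begin{equation*}
J_w^2 := \sum_{F \in \FhI}\xiF \Norm{\sqrt{\bk}\LF(\jump{w})}{L^2(\Omega)^d}^2 + \sum_{F \in \FhD}\xiF \Norm{\sqrt{\bk}\LD(w_{|_F})}{L^2(\Omega)^d}^2,
\end{equation*}
so that $\Tnorm{w}{\CDG}^2 = \Norm{\sqrt{\bk}\nablah w}{L^2(\Omega)^d}^2 + J_w^2$.

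First I would bound the volume terms $\int_\Omega \bk\,\nablah u\cdot\nablah v\dx$, $-\int_\Omega \bk\,\Lh u\cdot\nablah v\dx$ and $-\int_\Omega \bk\,\nablah u\cdot\Lh v\dx$ by the weighted Cauchy--Schwarz inequality, which produces the factors $\Norm{\sqrt{\bk}\nablah u}{L^2(\Omega)^d}$, $\Norm{\sqrt{\bk}\nablah v}{L^2(\Omega)^d}$, $\Norm{\sqrt{\bk}\Lh u}{L^2(\Omega)^d}$ and $\Norm{\sqrt{\bk}\Lh v}{L^2(\Omega)^d}$. The two lifting sums are handled by applying the weighted Cauchy--Schwarz inequality facet by facet and then the discrete Cauchy--Schwarz inequality in the $\xiF$-weighted $\ell^2$ inner product, which yields $J_u J_v$ for their combined contribution.

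The key step is to control $\Norm{\sqrt{\bk}\Lh u}{L^2(\Omega)^d}$ and $\Norm{\sqrt{\bk}\Lh v}{L^2(\Omega)^d}$ by the seminorm. For this I would invoke the bound~\eqref{eq:bound-Lh},
\begin{equation*}
\Norm{\sqrt{\bk}\Lh w}{L^2(\Omega)^d}^2 \le \sum_{F \in \FhI}\NF \Norm{\sqrt{\bk}\LF(\jump{w})}{L^2(\Omega)^d}^2 + \sum_{F \in \FhD}\NF \Norm{\sqrt{\bk}\LD(w_{|_F})}{L^2(\Omega)^d}^2,
\end{equation*}
and then use $\NF \le \xiF$, a consequence of Assumption~\ref{asm:xiF} (since $\NF/\xiF \le \gamma < 1$), to conclude $\Norm{\sqrt{\bk}\Lh w}{L^2(\Omega)^d} \le J_w$ for $w \in \{u,v\}$.

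Collecting the four estimates gives $\Ah(u,v) \le \big(\Norm{\sqrt{\bk}\nablah u}{L^2(\Omega)^d} + J_u\big)\big(\Norm{\sqrt{\bk}\nablah v}{L^2(\Omega)^d} + J_v\big)$, and the elementary inequality $a + b \le \sqrt{2}\sqrt{a^2 + b^2}$ for $a, b \ge 0$, applied to each factor, produces exactly the constant $2$. No genuine obstacle arises, since every estimate is a direct application of Cauchy--Schwarz and~\eqref{eq:bound-Lh}; the only mild subtlety is the bookkeeping required to obtain the sharp constant $2$ rather than a larger mesh-dependent one, which is precisely why the lifting contribution must be absorbed into the $\xiF$-weighted seminorm via $\NF \le \xiF$.
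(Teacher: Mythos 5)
Your proof is correct and follows exactly the route the paper indicates (the paper only sketches the argument as ``Cauchy--Schwarz plus bound~\eqref{eq:bound-Lh}''): term-by-term weighted Cauchy--Schwarz, absorption of $\Norm{\sqrt{\bk}\,\Lh w}{L^2(\Omega)^d}$ into the $\xiF$-weighted lifting seminorm via $\NF \le \xiF$, and the elementary inequality $a+b \le \sqrt{2}\sqrt{a^2+b^2}$ to produce the constant $2$. The only point worth flagging is that the step $\NF \le \xiF$ relies on Assumption~\ref{asm:xiF}, which the lemma's statement does not list among its hypotheses but which is in force wherever the lemma is invoked; you identified this dependence correctly.
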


\section{Convergence analysis}
\label{sec:convergence}
In this section, we derive~$hp$-\emph{a priori} error estimates for the CDG method. Henceforth, we denote by~$\Id$ the identity operator for~$d$-vector-valued functions, and we use~$a \lesssim b$ to indicate the existence of a positive constant~$C$ independent of~$h$, $\p$, and the maximum number of facets of the elements in~$\Omegah$ such that~$a \le C b$. Similarly, we use~$a \simeq b$ whenever~$a \lesssim b$ and~$b \lesssim a$.

The second Strang's lemma and the coercivity and continuity in Lemmas~\ref{lemma:coercivity-Ah} and~\ref{lemma:continuity-Ah} of the bilinear form~$\Ah(\cdot, \cdot)$ lead to the following~\emph{a priori} error bound.
\begin{lemma}[\emph{A priori} error bound]
\label{lemma:a-priori-bounds}
Let Assumptions~\ref{asm:alpha_F} and~\ref{asm:xiF} 
hold.
Let also the diffusion coefficient~$\bk$ and the continuous weak solution~$u$ to~\eqref{eq:model} satisfy: $u \in H_{\GD}^1(\Omega)$ and~$-\nabla \cdot (\bk \nabla u) \in L^2(\Omega)$, 
and let~$\uh \in \Vhp$ be the solution to the discrete formulation~\eqref{eq:CDG-variational}. Then, the following error bound holds:
\begin{alignat}{3}
\nonumber
\Tnorm{u - \uh}{\CDG} & \le \Big(1 + \frac{2}{\CA} \Big)\Tnorm{u - \wh}{\CDG} 
+ \frac{1}{\CA} \sup_{\vh \in \Vhp \setminus \{0\}} \frac{|\Ah(u, \vh) - \ell_h(\vh)|}{\Tnorm{\vh}{\CDG}},
\end{alignat}
for all~$\wh \in \Vhp$.
\end{lemma}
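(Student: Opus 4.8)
The plan is to prove this as an instance of the second Strang lemma, combining the coercivity of Lemma~\ref{lemma:coercivity-Ah}, the continuity of Lemma~\ref{lemma:continuity-Ah}, and the discrete equation~\eqref{eq:CDG-variational}. The hypotheses on~$u$ guarantee~$u \in H^1(\Omegah)$, so that both~$\Ah(u, \cdot)$ and~$\Tnorm{u}{\CDG}$ are well defined and the continuity lemma may be applied with~$u$ in one of its arguments. First I would fix an arbitrary~$\wh \in \Vhp$, set~$e_h := \wh - \uh \in \Vhp$, and split the error by the triangle inequality as
\begin{equation*}
\Tnorm{u - \uh}{\CDG} \le \Tnorm{u - \wh}{\CDG} + \Tnorm{e_h}{\CDG},
\end{equation*}
so that it only remains to control the fully discrete contribution~$\Tnorm{e_h}{\CDG}$.

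Next, assuming~$e_h \neq 0$ (the case~$e_h = 0$ being trivial), I would apply the coercivity estimate~\eqref{eq:coer-A}, which is legitimate because~$\Tnorm{\cdot}{\CDG}$ is a genuine norm on~$\Vhp$ by Lemma~\ref{lemma:CDG-norm}, and then insert~$\pm u$ in the first argument to obtain
\begin{equation*}
\CA \Tnorm{e_h}{\CDG}^2 \le \Ah(e_h, e_h) = \Ah(\wh - u, e_h) + \big(\Ah(u, e_h) - \lh(e_h)\big),
\end{equation*}
where the discrete identity~$\Ah(\uh, e_h) = \lh(e_h)$ from~\eqref{eq:CDG-variational} has been used to replace~$\Ah(-\uh, e_h)$ by~$-\lh(e_h)$. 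I would then bound the first term on the right by~$2 \Tnorm{u - \wh}{\CDG} \Tnorm{e_h}{\CDG}$ via the continuity of Lemma~\ref{lemma:continuity-Ah} (applicable since~$\wh - u \in H^1(\Omegah)$ and~$e_h \in \Vhp \subset H^1(\Omegah)$), and the second term, the consistency residual, by the supremum defining the right-hand side of the statement, times~$\Tnorm{e_h}{\CDG}$. Dividing through by~$\CA \Tnorm{e_h}{\CDG} > 0$ yields
\begin{equation*}
\Tnorm{e_h}{\CDG} \le \frac{2}{\CA} \Tnorm{u - \wh}{\CDG} + \frac{1}{\CA} \sup_{\vh \in \Vhp \setminus \{0\}} \frac{|\Ah(u, \vh) - \lh(\vh)|}{\Tnorm{\vh}{\CDG}}.
\end{equation*}

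Finally, inserting this bound into the triangle inequality and collecting the two occurrences of~$\Tnorm{u - \wh}{\CDG}$ gives the factor~$1 + \tfrac{2}{\CA}$ and completes the estimate. Since~$\wh \in \Vhp$ was arbitrary, the bound holds for all~$\wh$, as claimed. I do not expect any genuine obstacle in this lemma: the argument is the abstract second Strang estimate, and the only points requiring care are ensuring that division by~$\Tnorm{e_h}{\CDG}$ is permissible (which is exactly why Lemma~\ref{lemma:CDG-norm}, stating that~$\Tnorm{\cdot}{\CDG}$ is a norm and not merely a seminorm on~$\Vhp$, is needed) and that~$u$ lies in~$H^1(\Omegah)$ so the continuity lemma applies. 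The substantive work, namely estimating the consistency supremum~$\sup_{\vh}|\Ah(u, \vh) - \lh(\vh)|/\Tnorm{\vh}{\CDG}$, is deliberately left abstract here and is carried out separately in the remainder of Section~\ref{sec:convergence}.
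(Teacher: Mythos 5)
Your argument is correct and is exactly the route the paper takes: the statement is asserted there as a direct consequence of the second Strang lemma combined with the coercivity of Lemma~\ref{lemma:coercivity-Ah} and the continuity of Lemma~\ref{lemma:continuity-Ah}, which is precisely the decomposition you carry out in detail. The algebra (triangle inequality, coercivity on $\wh-\uh$, insertion of $\pm u$, Galerkin relation, division by $\Tnorm{\wh-\uh}{\CDG}$) checks out and reproduces the stated constants.
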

So far, we have made no assumptions on the family of polytopal meshes~$\{\Omegah\}_{h > 0}$. In fact, as for the LDG method (see~\cite[Prop.~2.1]{Castillo_Cockburn_Perugia_SChotzau:2000}), the well-posedness of~\eqref{eq:CDG-variational} does not rely on any of such assumptions.
However, in the convergence analysis, we require the following assumption (cf. \cite[Asm.~30 in Ch.~4.3]{Cangiani_Dong_Georgoulis_Houston:2017}).

\begin{assumption}[Mesh assumption]
\label{asm:mesh}
For any~$K \in \Omegah$, there exists a set of nonoverlapping~$d$-dimensional simplices~$\{\sKF\}_{F \in \FK}$ such that, for all~$F \in \FK$, $\sKF \subset K$ and shares the facet~$F$ with~$K$, and the following condition holds:
\begin{equation*}
\hK \le C_s \frac{d|s_K^F|}{|F|},
\end{equation*}
where~$C_s$ is a positive constant independent of the discretization parameters, the measure of~$F$, and the maximum number of facets of~$K$.
\end{assumption}

We further recall the notion of \emph{covering} of a polytopal mesh, as well as the concept of~\emph{covering choice function} introduced in~\cite[Def. 2.2]{Hewett:2025}.
\begin{definition}[Covering of~$\Omegah$]
For each~$\Omegah$, we call a covering~$\Tchar$ of~$\Omegah$ a set of simplices or hypercubes such that, for each~$K \in \Omegah$, there is at least one~$\calK \in \Tchar$ with~$K \subset \calK$. We say that~$\varphi : \Omegah \to \Tchar$ is a covering choice function if~$K \subset \varphi(K)$ for all~$K \in \Omegah$.
\end{definition}

\begin{remark}[Assumption~\ref{asm:mesh} on the mesh]
This assumption was originally introduced in~\cite[\S2.2]{Cangiani_Dong_Georgoulis_Houston:2017} in the context of a space--time IPDG method for the heat equation. In particular, it allows for elements that are a finite union of uniformly star-shaped polytopes. Moreover, it does not impose any direct restriction on the measure of the facets, thus allowing for elements with small facets~$F$, provided that the height of the associated simplex~$\sKF$ is comparable to the diameter~$\hK$ of the element. This freedom mainly results from the fact that the simplicial subpartition does not need to cover the whole element~$K$ and is allowed to be highly irregular. We refer to~\cite[\S2.2]{Cangiani_Dong_Georgoulis:2017} and~\cite[\S4.3]{Cangiani_Dong_Georgoulis_Houston:2017} for further details, and highlight that even more general meshes with possible curved facets have been considered in~\cite{Cangiani_Dong_Georgoulis:2021}.
\eremk
\end{remark}
\subsection{Some useful tools}
\label{sec:tools}
The following trace inequalities for discrete (see~\cite[Thm.~3]{Warburton_Hesthaven:2003}) 
and continuous (see~\cite[Eq.~(1.52) in Ch.~1]{DiPietro_Droniou:2020}) functions are instrumental in the~\emph{a priori} error analysis.
\begin{lemma}[Trace inequalities]
\label{lemma:trace-inequalities}
Let Assumption~\ref{asm:mesh} on~$\Omegah$ hold. Then, 
for all~$F \in \FhI\cup \FhD$, there hold
\begin{subequations}
\label{eq:inverse-estimates}
\begin{alignat}{3}
\label{eq:polynomial-trace-inverse}
\Norm{v_h}{L^2(F)}^2 & \le \frac{(\pKF + 1)(\pKF + d)}{d} \frac{|F|}{|\sKF|} \Norm{v_h}{L^2(\sKF)}^2 && \qquad \forall v_h \in \Pp{\pK}{\KF}, \\
\label{eq:continuous-trace}
\Norm{v}{L^2(F)}^2 & \le \Ctr \Big(\frac{\pKF}{\hKF} \Norm{v}{L^2(\sKF)}^2 + \frac{\hKF}{\pKF} \Seminorm{v}{H^1(\sKF)}^2 \Big) & & \qquad \forall v \in H^1(\KF), 
\end{alignat}
\end{subequations}
where~$\KF$ is as in~\eqref{def:NF}, and~$
\sKF$ is as in Assumption~\ref{asm:mesh}.
\end{lemma}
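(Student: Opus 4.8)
The plan is to obtain both estimates by transporting the corresponding known trace inequalities onto the auxiliary simplex~$\sKF$ whose existence is guaranteed by Assumption~\ref{asm:mesh}. The crucial geometric facts I would use are that~$\sKF \subset \KF$ and that~$F$ is exactly one of the~$(d-1)$-dimensional faces of~$\sKF$; consequently, the trace on~$F$ of any function defined on~$\KF$ coincides with the trace on that face of its restriction to~$\sKF$, and the restriction to~$\sKF$ of a polynomial of degree at most~$\pKF$ is again a polynomial of degree at most~$\pKF$. Thus each estimate reduces to applying a reference result on~$\sKF$ and then, for the continuous case, re-expressing the simplex diameter in terms of~$\hKF$.

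For the discrete bound~\eqref{eq:polynomial-trace-inverse}, I would simply restrict~$v_h \in \Pp{\pK}{\KF}$ to~$\sKF$ and apply the sharp simplicial trace inverse inequality of Warburton and Hesthaven~\cite[Thm.~3]{Warburton_Hesthaven:2003} on the single face~$F$ of the~$d$-simplex~$\sKF$, with polynomial degree~$\pKF$. This yields exactly the constant~$\frac{(\pKF+1)(\pKF+d)}{d}\frac{|F|}{|\sKF|}$, so no further manipulation is needed; the only role of Assumption~\ref{asm:mesh} here is to provide the simplex~$\sKF$, the ratio~$|F|/|\sKF|$ being kept explicit in the statement.

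For the continuous bound~\eqref{eq:continuous-trace}, I would apply the~$hp$-trace inequality~\cite[Eq.~(1.52) in Ch.~1]{DiPietro_Droniou:2020} on the simplex~$\sKF$, obtaining
$$\Norm{v}{L^2(F)}^2 \le C \Big( \frac{\pKF}{h_{\sKF}} \Norm{v}{L^2(\sKF)}^2 + \frac{h_{\sKF}}{\pKF} \Seminorm{v}{H^1(\sKF)}^2\Big),$$
with~$C$ depending only on the reference simplex. It then remains to replace~$h_{\sKF}$ by~$\hKF$ in both terms. This is where Assumption~\ref{asm:mesh} enters quantitatively: writing~$|\sKF| = \tfrac1d |F|\,H$ with~$H$ the height of~$\sKF$ over~$F$, the assumption specialized to~$K = \KF$ reads~$\hKF \le C_s H$, while~$H \le h_{\sKF} \le \hKF$ follows from~$\sKF \subset \KF$. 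Hence~$\hKF/C_s \le h_{\sKF} \le \hKF$, and substituting the lower bound in the first term and the upper bound in the second yields~\eqref{eq:continuous-trace} with~$\Ctr = C \max\{C_s, 1\}$.

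The only real subtlety, and the step I would treat most carefully, is the equivalence~$h_{\sKF} \simeq \hKF$ used in the continuous estimate: without the lower bound on the height supplied by Assumption~\ref{asm:mesh}, a degenerate (arbitrarily thin) simplex~$\sKF$ would make the coefficient~$\pKF/h_{\sKF}$ blow up and~$\Ctr$ would fail to be independent of the mesh. Tracking the constant shows that~$\Ctr$ depends only on~$C_s$ and the reference trace constant, hence is independent of~$h$, of~$\p$, and of the maximum number of facets per element, as required; for the discrete estimate no such issue arises precisely because the volume ratio is retained in the bound.
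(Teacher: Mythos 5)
The paper offers no proof of this lemma---it simply cites \cite[Thm.~3]{Warburton_Hesthaven:2003} for~\eqref{eq:polynomial-trace-inverse} and \cite[Eq.~(1.52) in Ch.~1]{DiPietro_Droniou:2020} for~\eqref{eq:continuous-trace}, which are precisely the two results you invoke, so your proposal follows the intended route and correctly identifies the height bound~$\hKF \le C_s H$ from Assumption~\ref{asm:mesh} as the essential quantitative input. One small imprecision in the continuous case: the constant in the trace inequality on~$\sKF$ is not ``reference-simplex only'' but scales like~$h_{\sKF}/H$ (a thin simplex of large diameter still has a large trace constant on the face~$F$ opposite its short height), so Assumption~\ref{asm:mesh} is really needed to control that constant itself rather than merely to replace~$h_{\sKF}$ by~$\hKF$---but the bounds~$H \ge \hKF/C_s$ and~$H \le h_{\sKF} \le \hKF$ that you derive supply exactly what is required, so the argument closes.
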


In order to get a suitable bound on the inconsistency term in Lemma~\ref{lemma:a-priori-bounds}, we first extend the result in~\cite[Lemma 2]{Brezzi_etal:2000} to polytopal meshes, where Assumption~\ref{asm:alpha_F} on the choice of the weighted-average parameters~$\alpha_F$ plays a crucial role.  
\begin{lemma}[Jump--lifting bounds]
\label{lemma:jump-lifting}
Under Assumptions~\ref{asm:alpha_F} and~\ref{asm:mesh}, for all~$v \in H^1(\Omegah)$, $\v \in H^1(\Omegah)^d$, and~$\vh \in \Mhp$, there hold
\begin{subequations}
\label{eq:jump-lifting-bounds}
\begin{alignat}{3}
\label{eq:jump-lifting-bounds-1}
\Norm{\jump{\vh}}{L^2(F)^d} & \le %
\hKF^{1/2} \Norm{\LF(\jump{\vh})}{L^2(\Omega)^d} & & \qquad \forall F \in \FhI,\\
\label{eq:jump-lifting-bounds-2}
\Norm{\vh}{L^2(F)} & \le %
\hKF^{1/2} \Norm{\LD(\vh{}_{|_F})}{L^2(\Omega)^d} & & \qquad \forall F \in \FhD,\\
\label{eq:lifting-jump-bounds-1}
\Norm{\LF(\jump{\v})}{L^2(\Omega)^d} & \le \sqrt{C_s \frac{(\pKF + 1)(\pKF + d)}{\hKF}} \Norm{\jump{\v}}{L^2(F)^d} & & \qquad \forall F \in \FhI, \\
\label{eq:lifting-jump-bounds-2}
\Norm{\LD(v{|_F})}{L^2(\Omega)^d} & \le \sqrt{C_s \frac{(\pKF + 1)(\pKF + d)}{\hKF}} \Norm{v}{L^2(F)} & & \qquad \forall F \in \FhI, 
\end{alignat}
\end{subequations}
with~$K_F\in \Omegah$ %
defined just above~\eqref{def:NF}.
\end{lemma}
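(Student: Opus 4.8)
My plan is to prove all four estimates by exploiting the defining relations~\eqref{eq:local-lifting-internal}--\eqref{eq:local-lifting-bndry} of the lifting operators, tested against a carefully chosen~$\rh \in \Mhp$. The starting point is Assumption~\ref{asm:alpha_F}, which guarantees that on an interior facet~$F$ shared by~$K_1, K_2$ the weighted average reduces to the one-sided trace from the higher-degree element, i.e.~$\mvl{\rh}_{1 - \alpha_F} = \rh{}_{|_{K_F}}$ with~$K_F$ as in~\eqref{def:NF}. Consequently~$\LF(\cdot)$ (and~$\LD(\cdot)$) is supported on the single element~$K_F$, and~\eqref{eq:local-lifting-internal} reads~$\int_{K_F} \LF(\vphi) \cdot \rh \dx = \int_F \vphi \cdot \rh{}_{|_{K_F}} \dS$. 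I treat the two pairs of bounds separately, since~\eqref{eq:lifting-jump-bounds-1}--\eqref{eq:lifting-jump-bounds-2} control the lifting from above (and hold for general~$\v, v$), whereas~\eqref{eq:jump-lifting-bounds-1}--\eqref{eq:jump-lifting-bounds-2} control the facet norm and are only needed for discrete functions.

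For the upper bounds~\eqref{eq:lifting-jump-bounds-1}--\eqref{eq:lifting-jump-bounds-2} I would test the defining identity with~$\rh := \LF(\jump{\v})$ (respectively~$\rh := \LD(v_{|_F})$). This gives~$\Norm{\LF(\jump{\v})}{L^2(\Omega)^d}^2 = \int_F \jump{\v} \cdot \LF(\jump{\v}){}_{|_F} \dS \le \Norm{\jump{\v}}{L^2(F)^d} \Norm{\LF(\jump{\v}){}_{|_F}}{L^2(F)^d}$ by Cauchy--Schwarz on~$F$. Since~$\LF(\jump{\v})$ is a polynomial on~$K_F$, I would apply the discrete trace inequality~\eqref{eq:polynomial-trace-inverse} to it, bound~$\Norm{\LF(\jump{\v})}{L^2(\sKF)^d} \le \Norm{\LF(\jump{\v})}{L^2(\Omega)^d}$, and use Assumption~\ref{asm:mesh} to replace~$|F|/|\sKF|$ by~$C_s d / \hKF$. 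Dividing through by~$\Norm{\LF(\jump{\v})}{L^2(\Omega)^d}$ yields the claimed constant~$\sqrt{C_s (\pKF + 1)(\pKF + d)/\hKF}$; the boundary case is identical.

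For the lower bounds~\eqref{eq:jump-lifting-bounds-1}--\eqref{eq:jump-lifting-bounds-2} I would instead choose the test function to match the jump on the facet. Since~$\jump{\vh}$ is a~$d$-vector-valued polynomial of degree at most~$\pKF$ on~$F$, and because Assumption~\ref{asm:alpha_F} makes the weighted average extract exactly the trace from~$K_F$, I can pick~$\rh \in \Mhp$ supported on~$K_F$ with~$\rh{}_{|_F} = \jump{\vh}$, concretely the polynomial extension of~$\jump{\vh}$ that is constant along the direction normal to~$F$. The defining identity then gives~$\Norm{\jump{\vh}}{L^2(F)^d}^2 = \int_F \jump{\vh} \cdot \rh{}_{|_F} \dS = \int_{K_F} \LF(\jump{\vh}) \cdot \rh \dx \le \Norm{\LF(\jump{\vh})}{L^2(\Omega)^d} \Norm{\rh}{L^2(K_F)^d}$, so it remains to show~$\Norm{\rh}{L^2(K_F)^d} \le \hKF^{1/2} \Norm{\jump{\vh}}{L^2(F)^d}$; the Dirichlet bound~\eqref{eq:jump-lifting-bounds-2} follows the same way using~$\LD$ and~$\vh{}_{|_F}$.

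The main obstacle is precisely this last estimate on the test function. Writing the normal extension in coordinates~$(y,t)$ adapted to~$F$, Fubini gives~$\Norm{\rh}{L^2(K_F)^d}^2 = \int |\jump{\vh}(y)|^2 \, |\{t : y + t \nF \in K_F\}| \, \mathrm{d}y$, where the normal cross-section has length at most~$\hKF$; the delicate point on a general polytopal element is that the normal projection of~$K_F$ onto the hyperplane of~$F$ may strictly contain~$F$, so one cannot naively bound the integrand by~$\Norm{\jump{\vh}}{L^2(F)^d}^2$. I would resolve this by working within the sub-simplex~$\sKF$ furnished by Assumption~\ref{asm:mesh}, whose facet is~$F$ and whose height over~$F$ is comparable to~$\hKF$, so that the normal extent stays controlled and the clean factor~$\hKF^{1/2}$, with no dependence on~$\pKF$ or on the number of facets, emerges. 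Throughout, Assumption~\ref{asm:alpha_F} is what makes the construction possible, since only the one-sided reduction of the weighted average lets the single polynomial~$\rh{}_{|_{K_F}}$ reproduce the full-degree jump~$\jump{\vh}$ on~$F$.
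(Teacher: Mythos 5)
Your proposal follows essentially the same route as the paper's proof. For the upper bounds~\eqref{eq:lifting-jump-bounds-1}--\eqref{eq:lifting-jump-bounds-2}, testing the defining identity with~$\rh = \LF(\jump{\v})$, applying Cauchy--Schwarz on~$F$, the discrete trace inequality~\eqref{eq:polynomial-trace-inverse}, and Assumption~\ref{asm:mesh} is exactly the paper's argument. For the lower bounds~\eqref{eq:jump-lifting-bounds-1}--\eqref{eq:jump-lifting-bounds-2}, your choice of~$\rh$ as the normal-constant polynomial extension of~$\jump{\vh}$ supported on~$\KF$ is also the paper's choice (borrowed from the proof of Lemma~\ref{lemma:CDG-norm}), and the role you assign to Assumption~\ref{asm:alpha_F} --- making the weighted average collapse to the trace from the higher-degree element so that~$\rh{}_{|_{\KF}}$ can reproduce the full-degree jump --- matches the paper.

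One caveat on your final paragraph. You correctly identify that the crux is the estimate~$\Norm{\rh}{L^2(\KF)^d} \le \hKF^{1/2}\Norm{\jump{\vh}}{L^2(F)^d}$, and you rightly observe that the normal projection of~$\KF$ onto the hyperplane of~$F$ may strictly contain~$F$. However, your proposed resolution --- ``working within the sub-simplex~$\sKF$'' --- does not close this gap: the Cauchy--Schwarz step produces the norm of~$\rh$ over all of~$\KF$ (the defining identity~\eqref{eq:local-lifting-internal} integrates over the whole element), and~$\rh$ must be a polynomial on the entirety of~$\KF$ to belong to~$\Mhp$, so restricting the integration to~$\sKF$ only produces a lower bound, not the upper bound you need. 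The extra contribution from~$\pi(\KF)\setminus F$ involves values of the polynomial~$\jump{\vh}$ outside~$F$ and cannot be absorbed into~$\Norm{\jump{\vh}}{L^2(F)^d}$ without a degree-dependent inverse estimate. To be fair, the paper itself asserts this bound tersely ``from the definition of~$\rh$ and Assumption~\ref{asm:mesh}'', implicitly requiring that the normal cross-sections of~$\KF$ over~$F$ have length at most~$\hKF$ and that the projection issue is benign; so your attempt is no less rigorous than the source, but the~$\sKF$ device should not be presented as resolving the difficulty.
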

\begin{proof}
For any~$F \in \FhI$, let~$\rh = \rh(\jump{\vh}) \in \Mhp$ be chosen as in the proof of Lemma~\ref{lemma:CDG-norm} such that~$\mvl{\rh}_{1-\alpha_F} = \jump{\vh}$ on~$F$. Then, using the Cauchy--Schwarz inequality and the definition in~\eqref{eq:local-lifting-internal} of the local lifting operator, we have
\begin{alignat}{3}
\nonumber
\Norm{\jump{\vh}}{L^2(F)^d}^2 = \int_F \jump{\vh} \cdot \jump{\vh} \dS & = \int_F \jump{\vh} \cdot \mvl{\rh(\jump{\vh})}_{1- \alpha_F} \dS \\
\nonumber
& = \int_{\Omega} \LF(\jump{\vh}) \cdot \rh(\jump{\vh}) \dS \\
\label{eq:bound-jumps}
& \le \Norm{\LF(\jump{\vh})}{L^2(\Omega)^d} \Norm{\rh(\jump{\vh})}{L^2(\Omega)^d}.
\end{alignat}
From the definition of~$\rh(\jump{\vh})$ and Assumption~\ref{asm:mesh}, we get
\begin{equation*}
\Norm{\rh(\jump{\vh})}{L^2(\Omega)^d} \le \hKF^{1/2} \Norm{\jump{\vh}}{L^2(F)^d},
\end{equation*}
which, combined with~\eqref{eq:bound-jumps}, gives~\eqref{eq:jump-lifting-bounds-1}. Bound~\eqref{eq:jump-lifting-bounds-2} follows analogously.

As for~\eqref{eq:lifting-jump-bounds-1}, we use the definition in~\eqref{eq:local-lifting-internal} of the local lifting operator~$\LF(\cdot)$, the Cauchy--Schwarz inequality, the inverse-trace inequality~\eqref{eq:polynomial-trace-inverse}, and Assumption~\ref{asm:mesh} to obtain
\begin{alignat*}{3}
\Norm{\LF(\jump{\v})}{L^2(\Omega)^d}^2  & = \int_{\Omega} \LF(\jump{\v}) \cdot \LF(\jump{\v}) \dx = \int_F \jump{\v} \cdot \LF(\jump{\v}){}_{|_{K_F}} \dS \\
& \le \Norm{\jump{\v}}{L^2(F)^d} \Norm{\LF(\jump{\v}){}_{|_{K_F}}}{L^2(F)^d} \\
& \le \sqrt{\frac{(\pKF + 1)(\pKF + d)}{d} \frac{|F|}{|\sKF|}} \Norm{\jump{\v}}{L^2(F)^d} \Norm{\LF(\jump{\v})}{L^2(K_F)^d} \\
& \le \sqrt{C_s \frac{(\pKF + 1)(\pKF + d)}{\hKF}} \Norm{\jump{\v}}{L^2(F)^d} \Norm{\LF(\jump{\v})}{L^2(\Omega)^d}.
\end{alignat*}
The proof of bound~\eqref{eq:lifting-jump-bounds-2} is analogous.
\end{proof}

The last ingredient is the following approximation result from~\cite[Lemma 2.3]{Hewett:2025}, which not only generalizes the corresponding result in~\cite[Lemma 4.31]{Cangiani_Dong_Georgoulis:2021} to non-Lipschitz domains, but also removes the usual assumption that the number of mesh elements in~$\Omegah$ that can intersect each covering element in~$\Tchar$ is uniformly bounded. 
\begin{lemma}[Estimates for~$\Pihp$]
\label{lemma:approx}
Let Assumption~\ref{asm:mesh} on~$\{\Omegah\}_{h > 0}$ hold, and assume that~$h \lesssim 1$. Let also~$\Tchar$ and~$\varphi : \Omegah \to \Tchar$ be a shape-regular covering of~$\Omegah$ and a covering choice function, respectively. Suppose that~$u \in L^2(\Omega)$ and that there exist~$\lK \geq 0$ and~$\UK \in H^{\lK}(\calK)$ such that~$(\UK)_{|_K} = u_{|_K}$  for each~$K \in \varphi^{-1}(\calK)$. Then, for each~$\calK \in \Tchar$, there exists an operator~$\Pihp^{\calK} : H^{\lK}(\calK) \to \Pp{\pcalK}{\calK}$ such that
\begin{alignat*}{3}
\bigg(\sum_{K \in \varphi^{-1}(\calK)} \Norm{u - \Pihp^{\calK} u}{H^q(K)}^2 \bigg)^{1/2} \lesssim \frac{\hcalK^{\ellK - q}}{\pcalK^{\lK - q}} \Norm{\UK}{H^{\lK}(\calK)}, \qquad \text{ for~$0 \le q \le \lK$},
\end{alignat*}
where~$\hcalK := \diam(\calK)$, $\pcalK := \min_{K \in \varphi^{-1}(\calK)} \pK$, $\ellK := \min\{\pcalK + 1, \lK\}$, and the hidden constant depends only on~$d$, $\lK$, and the shape-regularity constant of~$\Tchar$.
\end{lemma}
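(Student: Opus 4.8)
The plan is to establish the estimate one covering element at a time, and then exploit the fact that the mesh elements are pairwise disjoint to pass from the single-patch bound to the summed bound. This last point is exactly what allows one to dispense with any uniform bound on the cardinality of $\varphi^{-1}(\calK)$, which is the new feature of~\cite[Lemma 2.3]{Hewett:2025} compared to~\cite[Lemma 4.31]{Cangiani_Dong_Georgoulis:2021}.

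First I would construct the operator~$\Pihp^{\calK} : H^{\lK}(\calK) \to \Pp{\pcalK}{\calK}$ on a fixed reference simplex or hypercube~$\widehat{\calK}$ and transport it to~$\calK$ by the affine map associated with the shape-regular covering. On~$\widehat{\calK}$, one first extends~$\UK$ to all of~$\R^d$ by a universal (Stein-type) continuous extension operator, so that the extended function controls~$\Norm{\UK}{H^{\lK}(\calK)}$ even for fractional smoothness index~$\lK$; then classical $hp$-approximation theory of Babu\v{s}ka--Suri type provides a polynomial approximant whose $H^q$ error decays like~$\pcalK^{-(\lK - q)}$ for~$0 \le q \le \lK$. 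Scaling back to~$\calK$ and tracking the Jacobian factors introduces the power~$\hcalK^{\ellK - q}$, with~$\ellK = \min\{\pcalK + 1, \lK\}$ encoding the saturation of the algebraic rate once the polynomial degree exceeds the available regularity. This yields the single-element estimate
\begin{equation*}
\Norm{\UK - \Pihp^{\calK} \UK}{H^q(\calK)} \lesssim \frac{\hcalK^{\ellK - q}}{\pcalK^{\lK - q}} \Norm{\UK}{H^{\lK}(\calK)}, \qquad 0 \le q \le \lK,
\end{equation*}
with a hidden constant depending only on~$d$, $\lK$, and the shape-regularity constant of~$\Tchar$.

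Next I would define~$\Pihp^{\calK} u := \Pihp^{\calK} \UK$ and restrict it to each~$K \in \varphi^{-1}(\calK)$. Since~$(\UK)_{|_K} = u_{|_K}$ and~$K \subset \calK$, we have~$\Norm{u - \Pihp^{\calK} u}{H^q(K)} = \Norm{\UK - \Pihp^{\calK} \UK}{H^q(K)}$ for every such~$K$. The crucial observation is that the elements~$K \in \varphi^{-1}(\calK)$ are pairwise disjoint subsets of~$\calK$; writing the (integer-order parts of the) $H^q$ norm as a sum of~$L^2$ norms of derivatives and using additivity of the Lebesgue integral over disjoint sets gives
\begin{equation*}
\sum_{K \in \varphi^{-1}(\calK)} \Norm{u - \Pihp^{\calK} u}{H^q(K)}^2 = \sum_{K \in \varphi^{-1}(\calK)} \Norm{\UK - \Pihp^{\calK} \UK}{H^q(K)}^2 \le \Norm{\UK - \Pihp^{\calK} \UK}{H^q(\calK)}^2.
\end{equation*}
Combining this with the single-element estimate and taking square roots gives the claim.

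The main obstacle is the construction of the reference-element operator with the sharp, regularity-saturating dependence on both~$\hcalK$ and~$\pcalK$ that is \emph{uniform} in the (possibly fractional) index~$\lK$; this is where the Stein extension and the fine polynomial-approximation estimates enter, and it constitutes the technical heart of the argument. By contrast, the step that removes the uniform-overlap hypothesis is, as indicated above, an immediate consequence of the disjointness of the mesh elements, so that no counting of elements per covering patch is needed and the hidden constant is independent of~$\card(\varphi^{-1}(\calK))$ and of the maximum number of facets.
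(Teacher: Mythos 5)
Your proposal cannot be checked against an in-paper argument, because the paper does not prove this lemma: it is imported verbatim from~\cite[Lemma 2.3]{Hewett:2025}, with the only additional comment being that Stein's extension operator~\cite[Thm.~5 in Ch.~VI]{Stein} is a typical way to produce the hypothesised extension~$\UK$. That said, your outline is the standard route to such a result and is structurally sound. In particular, you correctly identify the mechanism that removes the usual uniform-overlap hypothesis: once the operator~$\Pihp^{\calK}$ and the single-patch bound on~$\Norm{\UK - \Pihp^{\calK}\UK}{H^q(\calK)}$ are in hand, the summed estimate follows from the fact that the elements of~$\varphi^{-1}(\calK)$ have pairwise disjoint interiors and are all contained in~$\calK$, so that the left-hand side is dominated by the norm over~$\calK$ with no counting of~$\card(\varphi^{-1}(\calK))$; this is exactly the feature the paper highlights when contrasting the lemma with~\cite[Lemma 4.31]{Cangiani_Dong_Georgoulis:2021}. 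The caveat is that your sketch delegates its ``technical heart'' --- the construction of a single operator on the (Lipschitz) covering element~$\calK$ achieving the simultaneous rates~$\hcalK^{\ellK - q}\pcalK^{-(\lK - q)}$ for possibly fractional~$\lK$, with constants depending only on~$d$, $\lK$, and the shape regularity of~$\Tchar$ --- entirely to ``Babu\v{s}ka--Suri type'' theory; that is where essentially all of the work in the cited reference lies, so as written your text is an accurate roadmap rather than a complete proof. Also note that your disjointness inequality, as you acknowledge, is immediate only for integer~$q$ (for fractional~$q$ one must additionally observe that dropping the cross terms in the Sobolev--Slobodeckij double integral preserves the inequality); since the paper only invokes the lemma with~$q \in \{0,1\}$, this is harmless here.
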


A typical choice for the extension operator in the statement of Lemma~\ref{lemma:approx} is the one proposed by Stein in~\cite[Thm. 5 in Ch. VI]{Stein}.
\subsection{\emph{A priori} error estimates}
In next lemma, we get a bound on the inconsistency term in Lemma~\ref{lemma:a-priori-bounds}. Since we do not have a stability term in the method (see~\eqref{eq:CDG-variational}), the jump terms %
must be controlled by the local lifting functions using Lemma~\ref{lemma:jump-lifting}. 
\begin{lemma}[Bound on the inconsistency term]
\label{lemma:inconsistency}
Let the assumptions of Lemma \ref{lemma:a-priori-bounds} hold.
Then, 
\begin{alignat}{3}
\nonumber
\sup_{\vh \in \Vhp \setminus \{0\}} \frac{|\Ah(u, \vh) - \ell_h(\vh)|}{\Tnorm{\vh}{\CDG}} 
& \le \frac{1}{\sqrt{k_{\star}}}
\Big(\sum_{F \in \FhI} \frac{\hKF}{\xiF} \Norm{\mvl{(\Id - \Pih)(\bk \nabla u)}_{1-\alpha_F}}{L^2(F)^d}^2 \\
\label{eq:estimate-inconsistency}
& \quad + \sum_{F \in \FhD} \frac{\hKF}{\xiF}  \Norm{(\Id - \Pih)(\bk \nabla u) \cdot \nOmega}{L^2(F)}^2 \Big)^{1/2}.
\end{alignat}
\end{lemma}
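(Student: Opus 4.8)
The plan is to derive an exact expression for the consistency error $\Ah(u,\vh)-\ell_h(\vh)$, show that it collapses to the two facet integrals involving $(\Id-\Pih)(\bk\nabla u)$ appearing on the right-hand side of \eqref{eq:estimate-inconsistency}, and then bound these by $\Tnorm{\vh}{\CDG}$ using the jump--lifting bounds of Lemma~\ref{lemma:jump-lifting} together with Assumption~\ref{asm:xiF}. Taking the supremum over $\vh\in\Vhp\setminus\{0\}$ at the end will give the claim.

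First I would exploit the continuity and boundary behaviour of the weak solution. Since $u\in H_{\GD}^1(\Omega)$, we have $\jump{u}=0$ on every $F\in\FhI$ and $u_{|_F}=\gD$ on every $F\in\FhD$, so that $\LF(\jump{u})=0$, $\Lh u=\LhD(\gD)$, and the interior lifting term in \eqref{eq:def-Ah} drops. Substituting this into \eqref{eq:def-Ah}, the leading volume contribution becomes $\int_\Omega\bk\nabla u\cdot\nablah\vh\dx$ up to lifting terms. Next I would integrate $\int_\Omega\bk\nabla u\cdot\nablah\vh\dx$ by parts element by element; using the strong form $-\nabla\cdot(\bk\nabla u)=f$, the Neumann condition $\bk\nabla u\cdot\nOmega=\gN$ on $\FhN$, and the fact that $-\nabla\cdot(\bk\nabla u)\in L^2(\Omega)$ forces $\bk\nabla u\in H(\mathrm{div},\Omega)$ so that its normal component is single-valued across interior facets (i.e. the normal jump of $\bk\nabla u$ vanishes), the average--jump identity converts the interelement sum into $\int_{\FhI}\jump{\vh}\cdot\mvl{\bk\nabla u}_{1-\alpha_F}\dS$ plus a boundary contribution $\int_{\FhD}\vh\,(\bk\nabla u\cdot\nOmega)\dS$. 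The volume source and the Neumann integral then cancel the $\int_\Omega f\vh\dx$ and $\int_{\FhN}\gN\vh\dS$ terms of $\ell_h$, while the remaining $\gD$-dependent boundary and lifting contributions cancel against the Dirichlet-data part of $\ell_h$ in \eqref{eq:def-lh} by exactly the manipulations used to derive the reduced formulation.

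What survives is entirely governed by the projection $\Pih$. In each remaining lifting volume integral I would insert $\Pih$ on $\bk\nabla u$ at no cost---legitimate because $\LF(\jump{\vh})$, $\LD(\vh_{|_F})$ and $\nablah\vh$ all belong to $\Mhp$, on which $\Pih$ acts as the identity under the $L^2$ inner product---and then apply the lifting definitions \eqref{eq:local-lifting-internal} and \eqref{eq:local-lifting-bndry}. Recombining these lifted facet integrals with the ones produced by the integration by parts, using linearity of the weighted average and of $\Pih$, yields the clean consistency identity
\begin{equation*}
\Ah(u,\vh)-\ell_h(\vh)=\int_{\FhI}\jump{\vh}\cdot\mvl{(\Id-\Pih)(\bk\nabla u)}_{1-\alpha_F}\dS+\int_{\FhD}\vh\,(\Id-\Pih)(\bk\nabla u)\cdot\nOmega\dS.
\end{equation*}

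For the final estimate I would apply Cauchy--Schwarz on each facet and invoke \eqref{eq:jump-lifting-bounds-1}--\eqref{eq:jump-lifting-bounds-2} to replace $\Norm{\jump{\vh}}{L^2(F)^d}$ and $\Norm{\vh}{L^2(F)}$ by $\hKF^{1/2}\Norm{\LF(\jump{\vh})}{L^2(\Omega)^d}$ and $\hKF^{1/2}\Norm{\LD(\vh_{|_F})}{L^2(\Omega)^d}$, respectively. Regrouping the factor $\hKF^{1/2}$ as $(\hKF/\xiF)^{1/2}\,\xiF^{1/2}$ and using $\Norm{w}{L^2(\Omega)^d}\le k_{\star}^{-1/2}\Norm{\sqrt{\bk}w}{L^2(\Omega)^d}$, a discrete Cauchy--Schwarz inequality over the facets separates the data factor $\big(\sum_F(\hKF/\xiF)\Norm{\cdots}{}^2\big)^{1/2}$ from the norm factor, which is bounded by $\big(\sum_{\FhI}\xiF\Norm{\sqrt{\bk}\LF(\jump{\vh})}{L^2(\Omega)^d}^2+\sum_{\FhD}\xiF\Norm{\sqrt{\bk}\LD(\vh_{|_F})}{L^2(\Omega)^d}^2\big)^{1/2}\le\Tnorm{\vh}{\CDG}$. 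Dividing by $\Tnorm{\vh}{\CDG}$ and taking the supremum produces \eqref{eq:estimate-inconsistency}. The hard part will be the derivation of the consistency identity above: one must carry out the elementwise integration by parts correctly, justify single-valuedness of the normal trace of $\bk\nabla u$ via $\bk\nabla u\in H(\mathrm{div},\Omega)$, and verify that \emph{all} data terms---including the more delicate $\gD$-dependent boundary and lifting contributions---cancel between $\Ah(u,\cdot)$ and $\ell_h$, leaving precisely the projection error. The bookkeeping of the weighted averages $\mvl{\cdot}_{1-\alpha_F}$ under Assumption~\ref{asm:alpha_F} and the insertion of $\Pih$ are where care is needed; by contrast, the subsequent norm estimate is routine.
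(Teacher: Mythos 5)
Your proposal is correct and follows essentially the same route as the paper: the paper's proof likewise reduces everything to the consistency identity $\Ah(u,\vh)-\ell_h(\vh)=\int_{\FhI}\mvl{(\Id-\Pih)(\bk\nabla u)}_{1-\alpha_F}\cdot\jump{\vh}\dS+\int_{\FhD}\vh\,(\Id-\Pih)(\bk\nabla u)\cdot\nOmega\dS$ (obtained by elementwise integration by parts and the lifting definitions) and then concludes with Cauchy--Schwarz and bounds~\eqref{eq:jump-lifting-bounds-1}--\eqref{eq:jump-lifting-bounds-2}. Your write-up simply supplies more of the intermediate bookkeeping (single-valuedness of the normal trace of $\bk\nabla u$, insertion of $\Pih$, cancellation of the data terms) that the paper compresses into ``standard arguments.''
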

\begin{proof}
For all~$\vh \in \Vhp$, integration by parts and standard arguments lead to the following identity:
\begin{alignat*}{3}
\Ah(u, \vh) - \ell_h(\vh) = \int_{\FhI} \mvl{(\Id - \Pih)(\bk \nabla u)}_{1- \alpha_F} \cdot \jump{\vh} \dS + \int_{\FhD} \vh (\Id - \Pih)(\bk \nabla u) \cdot \nOmega \dS,
\end{alignat*}
which, together with the Cauchy--Schwarz inequality and bounds~\eqref{eq:jump-lifting-bounds-1} and~\eqref{eq:jump-lifting-bounds-2}, implies~\eqref{eq:estimate-inconsistency}.
\end{proof}

The following local quasi-uniformity conditions are used to simplify some terms in the \emph{a priori} error estimate in Theorem~\ref{thm:a-priori-estimate}. 
\begin{assumption}[Local quasi-uniformity]
\label{asm:local-quasi-uniformity}
We assume that~$\{\Omegah\}_{h > 0}$ and the degree vectors~$\p = (\pK)_{K \in \Omegah}$ satisfy the following local quasi-uniformity conditions: for all neighboring elements~$K , K' \in \Omegah$ 
\begin{equation*}
\hK \simeq h_{K'} \quad \text{ and } \quad \pK \simeq p_{K'}.
\end{equation*}
Moreover, for all~$\calK \in \Tchar$, we assume that
\begin{equation*}
\hcalK \lesssim \hK \quad \text{ and } \quad \pK \lesssim \pcalK \qquad \text{ for all~$K \in \varphi^{-1}(\calK)$.}
\end{equation*}
\end{assumption}

\begin{theorem}[\emph{A priori} error estimates]
\label{thm:a-priori-estimate}
Let the assumptions of Lemma~\ref{lemma:a-priori-bounds} and Assumption~\ref{asm:local-quasi-uniformity} hold, and assume that~$\{\Omegah\}_{h > 0}$ and the continuous weak solution~$u$ to~\eqref{eq:model} satisfy the assumptions of Lemma~\ref{lemma:approx} with~$\lK > 3/2$. Then, for a sufficiently smooth diffusion coefficient~$\bk$, we have
\begin{alignat}{3}
\nonumber
\Tnorm{u - \uh}{\CDG}^2 & \lesssim \sum_{\calK \in \Tchar} \Big(\pcalK^{-1} + \max_{K \in \varphi^{-1}(\calK)} \Big(\max_{F \in \FKd } \xiF\Big) \Big) \frac{\hcalK^{2\ellK - 2}}{\pcalK^{2\lK - 3}}  \Norm{\UK}{H^{\lK}(\calK)}^2  \\
\label{eq:a-priori}
& \quad + \sum_{\calK \in \Tchar}  \max_{K \in \phi^{-1}(\calK)} \Big(\max_{F \in \FKd} \xiF^{-1} \Big) (1 + \pcalK) \frac{\hcalK^{2\ellK - 2}}{\pcalK^{2\lK - 3}} \Norm{\bk \nabla \UK}{H^{\lK - 1}(\calK)^d}^2,
\end{alignat}
where~$\ellK = \min\{\pcalK + 1, \lK\}$ and~$\FKd := \FK \cap (\FhI \cup \FhD)$.
\end{theorem}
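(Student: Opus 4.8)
The plan is to feed the abstract bound of Lemma~\ref{lemma:a-priori-bounds} with the concrete choice $\wh := \Pihp u$, and to estimate the two resulting contributions separately. Squaring the inequality of Lemma~\ref{lemma:a-priori-bounds} and using $(a+b)^2 \le 2a^2 + 2b^2$ together with $\CA \gtrsim 1$, it suffices to bound $\Tnorm{u - \Pihp u}{\CDG}^2$ (the approximation term) and the square of the inconsistency supremum, the latter already expanded in Lemma~\ref{lemma:inconsistency}.

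For the approximation term, I would split $\Tnorm{u - \Pihp u}{\CDG}^2$ into its three defining pieces. The volume piece $\Norm{\sqrt{\bk}\,\nablah(u - \Pihp u)}{L^2(\Omega)^d}^2 \le k^{\star}\sum_{K \in \Omegah}\Seminorm{u - \Pihp u}{H^1(K)}^2$ is handled directly by Lemma~\ref{lemma:approx} with $q = 1$ and produces the $\pcalK^{-1}$ contribution of the first sum in~\eqref{eq:a-priori}. For the lifting pieces, I would use the jump--lifting bounds~\eqref{eq:lifting-jump-bounds-1} and~\eqref{eq:lifting-jump-bounds-2} to replace each $\Norm{\LF(\jump{u - \Pihp u})}{L^2(\Omega)^d}$ and $\Norm{\LD((u - \Pihp u)_{|_F})}{L^2(\Omega)^d}$ by $\hKF^{-1/2}\pKF$ times the corresponding facet trace of $u - \Pihp u$; since $u$ is single-valued, these traces are controlled by the continuous trace inequality~\eqref{eq:continuous-trace} on the simplices $\sKF$. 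The key observation is that, by Assumption~\ref{asm:mesh}, the simplices $\{\sKF\}_{F \in \FK}$ are pairwise disjoint inside $K$, so after factoring out $\max_{F \in \FKd}\xiF$ the sum of the simplex norms collapses into the full element norms $\Norm{u - \Pihp u}{L^2(K)}^2$ and $\Seminorm{u - \Pihp u}{H^1(K)}^2$ \emph{without} any factor counting the facets of $K$. Inserting Lemma~\ref{lemma:approx} with $q = 0, 1$ and the local quasi-uniformity of Assumption~\ref{asm:local-quasi-uniformity} (to pass from $\hKF, \pKF$ to $\hcalK, \pcalK$) then yields the $\max_{F \in \FKd}\xiF$ contribution of the first sum.

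For the inconsistency term, Lemma~\ref{lemma:inconsistency} reduces everything to the facet traces of the $L^2$-projection error of $\psi := \bk\nabla u$. The essential device is the identity $(\Id - \Pih)\psi = (\Id - \Pih)(\psi - \PPihp\psi)$, valid because $\PPihp\psi \in \Mhp$ and $\Pih$ is the identity on $\Mhp$; it lets me trade the projection error for the approximation error $\psi - \PPihp\psi$ on each facet. Splitting $\Norm{(\Id - \Pih)(\psi - \PPihp\psi)}{L^2(F)}$ by the triangle inequality, I would estimate the smooth remainder $\Norm{\psi - \PPihp\psi}{L^2(F)}$ by the continuous trace inequality~\eqref{eq:continuous-trace} on $\sKF$, and the discrete part $\Norm{\Pih(\psi - \PPihp\psi)}{L^2(F)}$ by the polynomial inverse-trace inequality~\eqref{eq:polynomial-trace-inverse} followed by the $L^2(\KF)$-stability of $\Pih$. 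The hypothesis $\lK > 3/2$ guarantees $\psi \in H^{\lK - 1}$ with $\lK - 1 > 1/2$, so these facet traces are well defined; applying Lemma~\ref{lemma:approx} to $\psi$ with regularity index $\lK - 1$ and $q = 0, 1$, and using $\hcalK \lesssim 1$ to absorb the gap between $\min\{\pcalK + 1, \lK - 1\}$ and $\ellK - 1$, produces the factor $\Norm{\bk\nabla\UK}{H^{\lK - 1}(\calK)^d}^2$, with the $L^2$ part carrying the extra power $\pcalK$ and the $H^1$ part the constant $1$, which together give the $(1 + \pcalK)$ factor of the second sum.

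Finally, I would reorganize the facet sums into sums over the covering elements $\calK \in \Tchar$ through the covering choice function $\varphi$ and Assumption~\ref{asm:local-quasi-uniformity}. The main obstacle, and the reason Assumptions~\ref{asm:mesh} and~\ref{asm:xiF} are needed, is to keep every constant independent of the maximum number of facets per element. For the jump and smooth-remainder terms this follows from the disjointness of $\{\sKF\}$ noted above, which reconstitutes the element norms with no facet count. For the discrete part of the inconsistency, however, the full-element norm $\Norm{\psi - \PPihp\psi}{L^2(\KF)}^2$ is repeated once for each $F \in \FKout$, so the potentially dangerous factor $\card(\FKout) = \NF$ must be absorbed through Assumption~\ref{asm:xiF}, which enforces $\NF/\xiF \le \gamma < 1$ and hence $\sum_{F \in \FKout}\xiF^{-1} \lesssim 1$. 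Collecting the two resulting covering-indexed sums gives~\eqref{eq:a-priori}.
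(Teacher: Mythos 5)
Your proposal follows essentially the same route as the paper's proof: the same choice $\wh = \Pihp u$ in Lemma~\ref{lemma:a-priori-bounds}, the same four-way splitting into a volume term, two lifting terms, and the inconsistency term of Lemma~\ref{lemma:inconsistency}, the same use of the jump--lifting bounds, the two trace inequalities, the identity $(\Id - \Pih)(\bk\nabla u) = (\Id - \Pih)\big((\Id-\PPihp)(\bk\nabla u)\big)$, the $L^2$-stability of $\Pih$, Lemma~\ref{lemma:approx}, and Assumption~\ref{asm:local-quasi-uniformity}. The only point where you diverge is the bookkeeping of the discrete part of the inconsistency term: the paper keeps the norm on the simplex $\sKF$ and sums over $F \in \FK$ using the nonoverlap of $\{\sKF\}_{F\in\FK}$ (exactly as you do for the jump terms), which reconstitutes the element norm with no facet count and retains the factor $\max_{F\in\FKd}\xiF^{-1}$ in the second sum of~\eqref{eq:a-priori}. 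Your alternative --- passing to the full element norm facet by facet and then absorbing $\card(\FKout)$ via $\sum_{F\in\FKout}\xiF^{-1}\le\gamma$ from Assumption~\ref{asm:xiF} --- is arithmetically sound, but it replaces $\max_{F\in\FKd}\xiF^{-1}$ by the constant $\gamma$, and since $\xiF^{-1}<1$ under Assumption~\ref{asm:xiF} this is a marginally weaker $\xiF$-dependence than the stated estimate (the $h$- and $p$-rates are unaffected). Using the disjointness of the $\sKF$ for this term too, as you already do elsewhere, recovers~\eqref{eq:a-priori} exactly and makes Assumption~\ref{asm:xiF} unnecessary at this stage of the argument.
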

\begin{proof}
Combining the \emph{a priori} error bound in Lemma~\ref{lemma:a-priori-bounds} with the bound on the inconsistency term in Lemma~\ref{lemma:inconsistency}, and choosing~$\wh \in \Vhp$ as 
\begin{equation*}
\wh{}_{|_K} := (\Pihp^{\varphi(K)} u)_{|_{K}} \qquad \forall K \in \Omegah,
\end{equation*} 
we get
\begin{alignat}{3}
\nonumber
\Tnorm{u - \uh}{\CDG}^2 & \lesssim \Norm{\sqrt{\bk} \nablah (u - \Pihp u)}{L^2(\Omega)^d}^2 + \sum_{F \in \FhI} \xiF \Norm{\sqrt{\bk} \LF(\jump{u - \Pihp u})}{L^2(\Omega)^d}^2 \\
\nonumber
& \quad + \sum_{F \in \FhD} \xiF \Norm{\sqrt{\bk} \LD(u_{|_F} - (\Pihp u)_{|_{F}})}{L^2(\Omega)^d}^2 + \sum_{F \in \FhI \cup \FhD} \frac{\hKF}{k_{\star} \xiF} \Norm{(\Id - \Pih)(\bk \nabla u){}_{|_{\KF}}}{L^2(F)^d}^2 \\
\label{eq:error-terms}
& =: J_1 + J_2 + J_3 + J_4.
\end{alignat}
In~\eqref{eq:error-terms} and what follows, we omit the explicit dependence of the projection operator~$\Pihp$ on~$\varphi(K)$ for the sake of simplicity. 

We now estimate the terms~$\{J_i\}_{i = 1}^4$. Since~$\bk \in L^{\infty}(\Omega)^{d \times d}$, by using the approximation properties in Lemma~\ref{lemma:approx} of~$\Pihp$, we have
\begin{alignat}{3}
\label{eq:J1}
J_1 & = \sum_{K \in \Omegah} \Norm{\sqrt{\bk} \nabla (u - \Pihp u)}{L^2(K)}^2 \lesssim \sum_{\calK \in \Tchar} \frac{\hcalK^{2\ellK - 2}}{\pcalK^{2\lK - 2}} \Norm{\UK}{H^{\lK}(\calK)}^2.
\end{alignat}

For each~$K \in \Omegah$, let~$\FKd := \FK \cap (\FhI \cup \FhD)$. Using bounds~\eqref{eq:lifting-jump-bounds-1} and~\eqref{eq:lifting-jump-bounds-2}, the trace inequality~\eqref{eq:continuous-trace}, the approximation properties in Lemma~\ref{lemma:approx} of~$\Pihp$, and Assumption~\ref{asm:local-quasi-uniformity}, we obtain
\begin{alignat}{3}
\nonumber
J_2 + J_3 & = \sum_{F \in \FhI} \xiF \Norm{\sqrt{\bk} \LF(\jump{u - \Pihp u})}{L^2(\Omega)^d}^2 + \sum_{F \in \FhD} \xiF \Norm{\sqrt{\bk} \LD(u_{|_F} - (\Pihp u)_{|_F})}{L^2(\Omega)^d}^2 \\
\nonumber
& \lesssim \sum_{F \in \FhI} \frac{\xiF (\pKF + 1)(\pKF +d)}{\hKF} \Norm{\jump{u - \Pihp u}}{L^2(F)^d}^2 \\
\nonumber
& \quad + \sum_{F \in \FhD} \frac{\xiF(\pKF + 1)(\pKF + d)}{\hKF} \Norm{u_{|_F} - (\Pihp u)_{|_F}}{L^2(F)}^2 \\
\nonumber
& \lesssim \sum_{K \in \Omegah} \sum_{F \in \FKd} \frac{\xiF (\pKF + 1)(\pKF + d)}{\hKF} \Big(\frac{\pK}{\hK} \Norm{u - \Pihp u}{L^2(\sKF)}^2 + \frac{\hK}{\pK} \Seminorm{u - \Pihp u}{H^1(\sKF)}^2\Big) 
\\
\nonumber
& \lesssim \sum_{K \in \Omegah} \Big(\max_{F \in \FKd} \xiF \Big) \Big(\frac{\pK^3}{\hK^2} \Norm{u - \Pihp u}{L^2(K)}^2 + \pK\Seminorm{u - \Pihp u}{H^1(K)}^2\Big) 
\\
\label{eq:J2-J3}
& \lesssim \sum_{\calK \in \Tchar} \max_{K \in \varphi^{-1}(\calK)} \Big(\max_{F \in \FKd} \xiF\Big) \frac{\hcalK^{2\ellK-2}}{\pcalK^{2\lK-3}} \Norm{\UK}{H^{\lK}(\calK)}^2.
\end{alignat}

We denote by~$\PPihp$ the natural extension of the operator~$\Pihp$ to $d$-vector-valued functions. To estimate the terms~$J_4$ and~$J_5$, which arise from the inconsistency of the method, we use the triangle inequality, the trace inequalities~\eqref{eq:polynomial-trace-inverse} and~\eqref{eq:continuous-trace}, the stability of~$\Pih$ in the~$L^2(K)$ norm, the approximation properties of~$\PPihp$, and Assumption~\ref{asm:local-quasi-uniformity}, as follows:
\begin{alignat}{3}
\nonumber
J_4 & = \frac{1}{k_{\star}}\sum_{F \in \FhI \cup \FhD} \frac{\hKF}{\xiF} \Norm{(\Id - \Pih) (\bk \nabla u)_{|_{\KF}}}{L^2(F)^d}^2 \\
\nonumber
& \lesssim \sum_{F \in \FhI \cup \FhD} \frac{\hKF}{\xiF} \Big(\Norm{(\Id - \PPihp)(\bk \nabla u)_{|_{\KF}}}{L^2(F)^d}^2 + \Norm{\Pih(\Id - \PPihp)(\bk \nabla u)_{|_{\KF}}}{L^2(F)^d}^2\Big) \\
\nonumber
& \lesssim \sum_{F \in \FhI \cup \FhD} \frac{\hKF}{\xiF} \Big(
\frac{\pKF}{\hKF} \Norm{(\Id - \PPihp) (\bk \nabla u)}{L^2(\sKF)^d}^2 + \frac{\hKF}{\pKF}\Seminorm{(\Id - \PPihp)(\bk \nabla u)}{H^1(\sKF)^d}^2 \\
\nonumber
& \quad + \frac{C_s(\pKF + 1)(\pKF + d)}{\hKF} \Norm{\Pih(\Id - \PPihp)(\bk \nabla u)}{L^2(\sKF)^d}^2
\Big) \\
\nonumber
& \lesssim \sum_{K \in \Omegah} \Big(\max_{F \in \FKd} \xiF^{-1} \Big)  \Big[\big(\pK + \pK^2 \big) \Norm{(\Id - \PPihp)(\bk \nabla u)}{L^2(K)^d}^2  + \frac{\hK^2}{\pK}\Seminorm{(\Id - \PPihp)(\bk \nabla u)}{H^1(K)^d}^2\Big] \\
\label{eq:J4-J5}
& \lesssim \sum_{\calK \in \Tchar}  \max_{K \in \phi^{-1}(\calK)} \Big(\max_{F \in \FKd} \xiF^{-1} \Big) (1 + \pcalK) \frac{\hcalK^{2\ellK - 2}}{\pcalK^{2\lK - 3}} \Norm{\bk \nabla \UK}{H^{\lK - 1}(\calK)^d}^2.
\end{alignat}
Combining~\eqref{eq:J1}, \eqref{eq:J2-J3}, and~\eqref{eq:J4-J5} with \eqref{eq:error-terms}, we get~\eqref{eq:a-priori}.
\end{proof}

\begin{remark}[Additional suboptimality in~$\pcalK$]
The factor $(1+\pcalK)$ in the last term of \eqref{eq:a-priori} leads to a suboptimality in~$\pcalK$ that is a power higher than the one typically obtained for most DG methods. This occurs because the simple choice of the extension polynomials used in the proof of the bounds~\eqref{eq:jump-lifting-bounds-1} and~\eqref{eq:jump-lifting-bounds-2} does not yield the $\pK^{-1}$ factor obtained for %
tensor-product meshes (see \cite[Lemma 7.1]{Schotzau_Schwab_Toselli:2002} and~\cite[Prop.~3]{Pazner:2020}).
Recovering the factor~$\pK^{-1}$ for general polytopal meshes is challenging and requires designing a suitable extension polynomial operator. %

A simple way to avoid such suboptimality is to add, in~\eqref{eq:CDG-variational}, the same stability term used in the LDG method. However, this would over-stabilize the scheme; in fact, the terms in~\eqref{eq:CDG-variational} involving the local lifting functions already act as a stabilization mechanism in the CDG method (see~Remark~\ref{rem:stabilization}). Moreover, the benchmark numerical test in Section~\ref{sec:p-convergence} shows that the CDG method achieves optimal~$p$-convergence rates in situations where the $p$-version of the IPDG method exhibits the theoretically predicted suboptimal convergence by half an order; cf. \cite{georgoulis_suboptimality_2010}.
\eremk
\end{remark}

\section{Computational aspects}
\label{sec:comp_aspects}
In this section, we present fast algorithms to assemble the linear system~$\mathcal{A} U = \mathbf{b}$ for the CDG, the LDG, and the BR2 methods. 
In particular, these algorithms show that the assembling of the stiffness matrices can be carried out without storing large block-structured matrices. 
The ideas here are inspired by the analogous algorithm introduced in~\cite{Castillo_Sequeira:2013} for the LDG method on simplicial meshes. 

Let~$\{K_r\}_{r = 1}^{\Nh}$ be a prescribed order of the elements of the polytopal mesh~$\Omegah$ with total number of elements~$\Nh$. For~$r = 1, \ldots, \Nh$, we set~$\lKr := \dim(\Pp{\pKr}{K_r})$, and fix bases~$\Phi_r := \{\phi_i^r\}_{i = 1}^{\lKr}$ and~$\bS_r := \{\bs_i^r\}_{i = 1}^{d\cdot\lKr}$ of~$\Pp{\pKr}{K_r}$ and~$\Pp{\pKr}{K_r}^d$, respectively. Moreover, to avoid unnecessary computations, we set~$\bS_r = [\Phi_r]^d$. 
Fixed bases for the global spaces~$\Vhp$ and~$\Mhp$ can then be naturally defined. The implemented code is based on the \texttt{lymph} library and the bases are constructed from the tensor product of the Legendre polynomials in one dimension, considering only the 2D polynomials of degree less than or equal to~$p_{K_r}$. The computation of the integrals is based on a quadrature-free approach for the matrices and on a sub-tessellation of the polygons coupled with Gauss-Legendre rules for the forcing term and the face integrals (see \cite[\S4.2]{antonietti_lymph_2024}).

Let~$\M$, $\D$, $\Bgrad$, and~$\Bav$ be the matrices associated with the following bilinear forms:
\begin{alignat*}{3}
\mh(\qh, \rh) & := \sum_{K \in \Omegah} \int_K \qh \cdot \rh \dx  & & \qquad \forall (\qh, \rh) \in \Mhp \times \Mhp, \\
\dh(\qh, \sh) & := \sum_{K \in \Omegah} \int_K \bk \qh \cdot \sh \dx  & & \qquad \forall (\qh, \sh) \in \Mhp \times \Mhp, \\
\bgradh(\uh, \rh) & := - \sum_{K \in \Omegah} \int_K \nabla \uh \cdot \rh \dx & & \qquad \forall (\uh, \rh) \in \Vhp \times \Mhp, \\
\bav(\uh, \rh) & := \int_{\FhI} \jump{\uh} \cdot \mvl{\rh}_{1 - \alpha_F} \dS + \int_{\FhD} \uh \rh \cdot \nOmega \dS & & \qquad \forall (\uh, \rh) \in \Vhp \times \Mhp.
\end{alignat*}
By definition, the matrices~$\M$, $\D$, and~$\Bgrad$ have a block-diagonal structure and their $r$th diagonal blocks, which involve only volume integrals, are given by
\begin{alignat*}{3}
\M_{rr} := \bigg[\int_{K_r} \bs_i^r \cdot \bs_j^r \dx\bigg]_{ij}, \qquad 
\D_{rr} := \bigg[\int_{K_r} \bs_i^r \cdot \bk_{|_{K_r}} \bs_j^r \dx\bigg]_{ij}, \qquad 
\Bgrad_{rr}  := -\bigg[\int_{K_r}  \bs_i^r \cdot \nabla \phi_j^r \dx \bigg]_{ij},
\end{alignat*}
where the ranges of the indices~$i$ and~$j$ have been omitted for the sake of simplicity.

The matrix~$\Bav$ has a block structure. In the cases of CDG and weighted LDG (henceforth identified as~$\LDGw$) methods, a block~$\Bav_{rs}$ is nonzero only if there is a facet~$F \in \FKrout \cap \FKs$ (including the trivial case~$r = s$). For each element~$K_r \in \Omegah$, let~$\{F_r^{(\ell)}\}_{\ell = 1}^{\card(\FKrout)}$ be the facets in~$\FKrout$ with a prescribed order. Each facet~$F_r^{(\ell)}$ adds the following contributions to the diagonal block~$\Bav_{rr}$, and if~$F_r^{(\ell)} \in \FhI$ is shared by~$K_r$ and~$K_{s(\ell)}$, it also contributes to the off-diagonal block~$\Bav_{rs(\ell)}$ as follows:
\begin{equation*}
   \Bav_{rr}{}^{(\ell)} = \bigg[\int_{F_r^{(\ell)}} \bs_{i}^r \cdot (\phi_j^r \nKr)\dS \bigg]_{ij}, \qquad \Bavl_{rs(\ell)} = -\bigg[\int_{F_r^{(\ell)}} \bs_i^r \cdot (\phi_j^{s(\ell)} \nKr) \dS \bigg]_{ij}.
\end{equation*}
We now focus on the BR2 and the LDG methods with~$\alpha_F = 1/2$ for all~$f \in \FhI$. This choice of the %
weighted-average parameters leads to a full stencil for the LDG method (which is therefore identified as $\LDGf$).
For each element~$K_r \in \Omegah$, let~$\{F_r^{(\ell)}\}_{\ell = 1}^{\card(\FKr)}$ %
be the facets in~$\FKr$ with a prescribed order. Then, the two associated block contributions are
\begin{equation*}
   \Bav_{rr}{}^{(\ell)} = \bigg[\frac{1}{2}\int_{F_r^{(\ell)}} \bs_{i}^r \cdot (\phi_j^r \nKr)\dS \bigg]_{ij}, \qquad \Bavl_{rs(\ell)} = -\bigg[\frac{1}{2}\int_{F_r^{(\ell)}} \bs_i^r \cdot (\phi_j^{s(\ell)} \nKr) \dS \bigg]_{ij}.
\end{equation*}
In addition, let~$\mathbf{b}_f$ and~$\mathbf{g}$ be the vectors associated with the following linear functionals:
\begin{alignat*}{3}
\ell(\vh) = \int_{\Omega} f \vh \dx \quad \forall \vh \in \Vhp, \qquad \text{ and } \quad  \gh(\rh) & := \int_{\FhD} g_\mathrm{D} \rh \cdot \nOmega \dS \quad \forall \rh\in\Mhp.
\end{alignat*}
\begin{algorithm}[H]
\caption{\sc Fast assembly of the linear system for the CDG and BR2 methods}
\label{alg:CDG-BR2}
\SetKw{INPUT}{Input:}
\SetKw{TO}{ to }
\SetKw{SET}{Set}
\SetKw{INITIALIZE}{Initialize}
\SetKw{STOP}{stop}
\SetKw{COMPUTE}{Compute}
\SetKw{SOLVE}{Solve}
\INITIALIZE the matrix $\calA$ and the vector $\mathbf{b}$\\
\For {$r = 1$ \TO $\Nh$}{
    \COMPUTE $\M_{rr}^{-1}$, $\D_{rr}$, $\Bgrad_{rr}$, and~$\mathbf{b}_{f, r}$ \\
    \SET $\calD_{rr} = \M_{rr}^{-1} \D_{rr} \M_{rr}^{-1}$ and $\mathbf{b}_r = \mathbf{b}_{f, r}$\\
    $\calA_{rr} = \calA_{rr} + [\Bgrad_{rr}]^{\sf T} \calD_{rr} [\Bgrad_{rr}]$   \\
    \SET $L_r^{\star} = \card(\FKrout)$ (for CDG) or~$L_r^{\star} = \card(\FKr)$ (for BR2) \\  
    \For {$\ell = 1$ \TO $L_r^{\star}$
    }{
          \eIf {$F_r^{(\ell)} \in \FhI$}{
            \SET $s(\ell)$ as the index of the neighbor sharing~$F_r^{(\ell)}$\\
            \COMPUTE the diagonal contribution~$\Bav_{rr}{}^{(\ell)}$, the off-diagonal one~$\Bavl_{rs(\ell)}$, and $\mathbf{g}_r^{(\ell)}$\\
            $\calA_{rr} = \calA_{rr} - [\Bgrad_{rr}]^{\sf T} \calD_{rr} [\Bavl_{rr}] - [\Bavl_{rr}]^{\sf T} \calD_{rr} [\Bgrad_{rr}] + \chi_{F_r^{(\ell)}} [\Bav_{rr}{}^{(\ell)}]^{\sf T} \calD_{rr} [\Bav_{rr}{}^{(\ell)}]$\\
            $\calA_{rs(\ell)} = \calA_{rs(\ell)} - [\Bgrad_{rr}]^{\sf T} \calD_{rr}[\Bavl_{rs(\ell)}] + \chi_{F_r^{(\ell)}} [\Bav_{rr}{}^{(\ell)}]^{\sf T} \calD_{rr} [\Bavl_{rs(\ell)}]$ \\
            $\calA_{s(\ell)r} = \calA_{s(\ell)r} - [\Bavl_{rs(\ell)}]^{\sf T} \calD_{rr} [\Bgrad_{rr}] + \chi_{F_r^{(\ell)}} [\Bavl_{rs(\ell)}]^{\sf T} \calD_{rr} [\Bav_{rr}{}^{(\ell)}]$ \\
            $\calA_{s(\ell)s(\ell)} = \calA_{s(\ell)s(\ell)} 
            + \chi_{F_r^{(\ell)}} [\Bavl_{rs(
            \ell)}]^{\sf T} \calD_{rr} [\Bavl_{rs(\ell)}]$
          }{
            \COMPUTE the diagonal contribution~$\Bav_{rr}{}^{(\ell)}$\\
            $\mathbf{b}_{r} = \mathbf{b}_{r} - [\Bgrad_{rr}]^{\sf T} \calD_{rr} [\mathbf{g}_{r}^{(\ell)}] + \chi_{F_r^{(\ell)}} [\Bav_{rr}{}^{(\ell)}]^{\sf T} \calD_{rr} [\mathbf{g}_{r}^{(\ell)}]$ \\
            $\calA_{rr} = \calA_{rr} - [\Bgrad_{rr}]^{\sf T} \calD_{rr} [\Bav_{rr}{}^{(\ell)}] - [\Bav_{rr}{}^{(\ell)}]^{\sf T} \calD_{rr} [\Bgrad_{rr}]  + \chi_{F_r^{(\ell)}} [\Bav_{rr}{}^{(\ell)}]^{\sf T} \calD_{rr} [\Bav_{rr}{}^{(\ell)}]$\\
          }
    }
}
\end{algorithm}

The computation of~$\mathbf{b}_f$, which is the same for all the methods considered, is simple and does not require further details. As for~$\mathbf{g}$, the following vector contribution is obtained for any~$F_r^{(\ell)} \in \FhD \cap \FKr$:
\begin{alignat*}{3}
\mathbf{g}_{r}^{(\ell)} := \bigg[ - \int_{F_r^{(\ell)}} \bs_i^r \cdot (g_\mathrm{D} \nKr) \dS \bigg]_{i}.
\end{alignat*}

It is then evident that all the local contributions involved in the assembling of the linear systems for these methods are essentially the same, which allows for their implementation in an almost unified framework. 
In Algorithm~\ref{alg:CDG-BR2}, we report the procedure to assemble the stiffness matrix $\calA$ and the right-hand side vector~$\mathbf{b}$ for the CDG and BR2 methods. 
From this algorithm, it can be easily seen that the choice~$\alpha_F = 0$ or~$\alpha_F = 1$ in the CDG method allows us to skip lots of computations, 
as it reduces the number $L_r^{\star}$ introduced in \texttt{line 6} and, consequently, the number of loop iterations in the following lines.
In Algorithm~\ref{alg:LDG}, we report the corresponding algorithm for both version of the LDG method.
\begin{remark}[Stability matrix in LDG method]
The implementation of the LDG method also involves a stability matrix $S$, and an additional vector $\tilde{\mathbf{g}}$ associated with the Dirichlet boundary condition. 
More precisely, these terms are associated with the bilinear form and the linear functional 
\begin{equation*}
\mathbf{s}_h(\uh,\vh) := \int_{\FhI} \eta_F \jump{\uh} \cdot \jump{\vh} \dS + \int_{\FhD} \eta_F \uh \vh \dS, \qquad \tilde{g}_h(\vh) := \int_{\FhD} \eta_F g_\mathrm{D} \vh \dS.
\end{equation*}
For the detailed definition of the penalty term $\eta_F$ used in our code, we refer to~\cite[Eq.~(4.26) in \S4.3]{Cangiani_Dong_Georgoulis_Houston:2017}. From a computational perspective, the matrix~$S$ has a natural compact stencil and is not affected by the choice of the weighted-average parameters.  
However, the presence of this additional matrix increases the computational cost w.r.t. the CDG method.
\eremk
\end{remark}
\begin{algorithm}[H]
\caption{\sc Fast assembly of the linear system for the LDG method}
\label{alg:LDG}
\SetKw{INPUT}{Input:}
\SetKw{TO}{ to }
\SetKw{SET}{Set}
\SetKw{INITIALIZE}{Initialize}
\SetKw{STOP}{stop}
\SetKw{COMPUTE}{Compute}
\SetKw{SOLVE}{Solve}
\For {$r = 1$ \TO $\Nh$}{
    \COMPUTE $\M_{rr}^{-1}$, $\D_{rr}$, and $\Bgrad_{rr}$ \\
    \SET $\calD_{rr} = \M_{rr}^{-1} \D_{rr} \M_{rr}^{-1}$ \\
    \COMPUTE the diagonal blocks~$\Bav_{rr}$ and~$S_{rr}$, the off-diagonal blocks~$\Bav_{rs}$ and~$S_{rs}$, and the 
    vectors~$\mathbf{b}_{f, r}$, $\mathbf{g}_r$, and $\tilde{\mathbf{g}}_r$\\
    \SET $\B_{rr} = \Bgrad_{rr} + \Bav_{rr}$ and $\B_{rs} = \Bav_{rs}$ \\
    $\calA_{rr} = \calA_{rr} + S_{rr}$ and $\mathbf{b}_r = \mathbf{b}_{f,r} + \tilde{\mathbf{g}}_r$\\
    \For {$i  \in \NK
    $}{$\calA_{ri} = \calA_{ri} + S_{ri}$
    }
    \SET $\mathcal{N}_{K_r}^{\mathrm{loop}} = \NKrout$ (for~$\LDGw$) or     $\mathcal{N}_{K_r}^{\mathrm{loop}} = \NKr$ (for~$\LDGf$) \footnote{$\NKrout$ is defined in~\eqref{def:NKout}, and $\mathcal{N}_{K_r}$ is the set neighbors of $K_r$.} \\
    \For {$i  \in \mathcal{N}_{K_r}^{\mathrm{loop}}$}{
          \For {$j  \in \mathcal{N}_{K_r}^{\mathrm{loop}}$}{
            $\calA_{ij} = \calA_{ij} + [\B_{ri}]^{\sf T} \calD_{rr} [\B_{rj}]$
          }
    \If {$\FKr\cap\FhD \neq \emptyset$}{
        $\mathbf{b}_r  = \mathbf{b}_r + [\B_{ri}]^{\sf T} \mathbf{g}_r$}
    }
}
\end{algorithm}

\footnotetext{$\NKrout$ is defined in~\eqref{def:NKout}, and $\mathcal{N}_{K_r}$ is the set neighbors of $K_r$.}

\section{Numerical experiments}
\label{sec:results}
In this section, we present some numerical tests to assess the accuracy and efficiency of the CDG method. 
In Section~\ref{sec:test_case_convergence}, we discuss the convergence of the CDG method. In Section~\ref{sec:test_case_methods_comparison}, we compare the CDG computational efficiency with respect to that of the LDG and BR2 methods. The choice of the method parameters is studied in Sections~\ref{sec:chi_F}--\ref{sec:choice-alpha-p-variable}. A comparison of the~$p$-version of the CDG and IPDG methods for a singular solution is carried out in Section~\ref{sec:p-convergence}.
\par
All numerical simulations %
are based on the \texttt{lymph} library \cite{antonietti_lymph_2024}, implementing 
DG methods on polytopic meshes. The polygonal meshes are constructed using PolyMesher~\cite{talischi_polymesher_2012}.
\par
For the numerical tests in this section, 
we consider the space domain~$\Omega=(0,1)^2$
and Dirichlet boundary conditions on~$\partial\Omega$ (namely, $\Gamma_\mathrm{N}=\emptyset$), unless otherwise specified. Concerning the diffusion tensor,
we consider $\boldsymbol{\kappa}=\mathbf{I}$, where~$\mathbf{I}$ represents the identity matrix of size~$2$. We adopt the manufactured exact solution $u(x,y) = \sin(2 \pi x)\cos(2 \pi y)$ for the test cases in Sections~\ref{sec:test_case_convergence}--\ref{sec:alpha-F-homogeneous-p}. For the test cases in Sections~\ref{sec:choice-alpha-p-variable}--\ref{sec:p-convergence}, the corresponding exact solutions are specified directly within those sections. In all cases, the forcing term $f$ and the Dirichlet boundary condition $g_\mathrm{D}$ are computed accordingly.

\subsection{Convergence properties of CDG method}
\label{sec:test_case_convergence}
\begin{figure}[t!]
    \begin{subfigure}[b]{0.33\textwidth}
          \resizebox{\textwidth}{!}{\definecolor{mycolor2}{rgb}{0.00000,1.00000,1.00000}%
\begin{tikzpicture}
\begin{axis}[%
width=3in,
height=2in,
at={(2.6in,1.099in)},
scale only axis,
xmode=log,
xmin=0.017228947735558,
xmax=0.185101829426493,
xminorticks=true,
xlabel = {$h$ [-]},
ylabel = {$\|u_h-u\|_{L^2(\Omega)}$},
xticklabel={\pgfmathparse{exp(\tick)}\pgfmathprintnumber{\pgfmathresult}},
x tick label style={
/pgf/number format/.cd, fixed, fixed zerofill,
precision=2},
ymode=log,
ymin=1e-11,
ymax=1e-1,
yminorticks=true,
axis background/.style={fill=white},
title style={font=\bfseries},
xmajorgrids,
xminorgrids,
ymajorgrids,
yminorgrids,
legend style={at={(0.96,0.40)},legend cell align=left, draw=white!15!black}
]
              
\addplot [color=red, line width=2.0pt, mark=*]
  table[row sep=crcr]{%
0.185101829426493   2.129372925426900e-02 \\
0.134359609879251   1.025754194982700e-02 \\
0.096923683183123   5.104460239877000e-03 \\
0.068774852532456   2.560814959594000e-03 \\
0.048201665231121   1.254719654223000e-03 \\
0.033765166440742   6.221363833215340e-04 \\
0.024216305406150   3.115828467914513e-04 \\
0.017228947735558   1.561381742036755e-04 \\
};
\addlegendentry{$p=1$}

\addplot [color=violet, line width=2.0pt, mark=*]
  table[row sep=crcr]{%
0.185101829426493   1.411477302974000e-03 \\
0.134359609879251   4.883835374558802e-04 \\
0.096923683183123   1.681756573401179e-04 \\
0.068774852532456   5.982207956090846e-05 \\
0.048201665231121   2.123464802329850e-05 \\
0.033765166440742   7.390415642517033e-06 \\
0.024216305406150   2.606980852261043e-06 \\
0.017228947735558   9.122894860779180e-07 \\
};
\addlegendentry{$p=2$}

\addplot [color=teal, line width=2.0pt, mark=*] 
  table[row sep=crcr]{%
0.185101829426493   9.133131050728428e-05 \\
0.134359609879251   2.308256115941455e-05 \\
0.096923683183123   5.832737826695899e-06 \\
0.068774852532456   1.365859318639999e-06 \\
0.048201665231121   3.504399248774234e-07 \\
0.033765166440742   8.487973688809233e-08 \\
0.024216305406150   2.101475907001714e-08 \\
0.017228947735558   5.345633070570796e-09 \\
};
\addlegendentry{$p=3$}

\addplot [color=green, line width=2.0pt, mark=*]
  table[row sep=crcr]{%
0.185101829426493   5.620551478098124e-06 \\
0.134359609879251   9.733484608665353e-07 \\
0.096923683183123   1.613422783428660e-07 \\
0.068774852532456   2.836567423416056e-08 \\
0.048201665231121   4.748715100591094e-09 \\
0.033765166440742   8.337420112081393e-10 \\
0.024216305406150   1.453710742805605e-10 \\
0.017228947735558   2.499088964945406e-11 \\
};
\addlegendentry{$p=4$}

\node[right, align=left, text=black, font=\footnotesize]
at (axis cs:0.0405,3.5e-4) {$2$};

\addplot [color=black, line width=1.5pt]
  table[row sep=crcr]{%
0.040   5.00e-4\\
0.030   2.81e-4\\
0.040   2.81e-4\\
0.040   5.00e-4\\
};

\node[right, align=left, text=black, font=\footnotesize]
at (axis cs:0.0405,3.5e-6) {$3$};

\addplot [color=black, line width=1.5pt]
  table[row sep=crcr]{%
0.040   5.00e-6\\
0.030   2.11e-6\\
0.040   2.11e-6\\
0.040   5.00e-6\\
};

\node[right, align=left, text=black, font=\footnotesize]
at (axis cs:0.0405,4.5e-8) {$4$};

\addplot [color=black, line width=1.5pt]
  table[row sep=crcr]{%
0.040   8.00e-8\\
0.030   2.53e-8\\
0.040   2.53e-8\\
0.040   8.00e-8\\
};
\node[right, align=left, text=black, font=\footnotesize]
at (axis cs:0.0405,4.5e-10) {$5$};

\addplot [color=black, line width=1.5pt]
  table[row sep=crcr]{%
0.040   8.00e-10\\
0.030   1.89e-10\\
0.040   1.89e-10\\
0.040   8.00e-10\\
};

\end{axis}
\end{tikzpicture}%}
          \caption{$L^2$ errors w.r.t.~the mesh size~$h$.}
        \label{fig:Errors_h_L2}
    \end{subfigure}
    \begin{subfigure}[b]{0.33\textwidth}
          \resizebox{\textwidth}{!}{\definecolor{mycolor2}{rgb}{0.00000,1.00000,1.00000}%
\begin{tikzpicture}
\begin{axis}[%
width=3in,
height=2in,
at={(2.6in,1.099in)},
scale only axis,
xmode=log,
xmin=0.017228947735558,
xmax=0.185101829426493,
xminorticks=true,
xlabel = {$h$ [-]},
ylabel = {$|\!|\!|u_h-u|\!|\!|_\mathrm{CDG}$},
xticklabel={\pgfmathparse{exp(\tick)}\pgfmathprintnumber{\pgfmathresult}},
x tick label style={
/pgf/number format/.cd, fixed, fixed zerofill,
precision=2},
ymode=log,
ymin=1e-8,
ymax=1e+0,
yminorticks=true,
axis background/.style={fill=white},
title style={font=\bfseries},
xmajorgrids,
xminorgrids,
ymajorgrids,
yminorgrids,
legend style={at={(0.96,0.35)},legend cell align=left, draw=white!15!black}
]
              
\addplot [color=red, line width=2.0pt, mark=*]
  table[row sep=crcr]{%
0.185101829426493   0.798073971266559 \\
0.134359609879251   0.567330300289801 \\
0.096923683183123   0.380392426794669 \\
0.068774852532456   0.265421587253200 \\
0.048201665231121   0.189184115497626 \\
0.033765166440742   0.133479104256488 \\
0.024216305406150   0.094927594798161 \\
0.017228947735558   0.066916393933002 \\
};

\addplot [color=violet, line width=2.0pt, mark=*]
  table[row sep=crcr]{%
0.185101829426493   0.177061158946983 \\
0.134359609879251   0.092592060318450 \\
0.096923683183123   0.045878125346317 \\
0.068774852532456   0.022935879090370 \\
0.048201665231121   0.011773496639597 \\
0.033765166440742   0.005813051631681 \\
0.024216305406150   0.002899601307539 \\
0.017228947735558   0.001443806947121 \\
};

\addplot [color=teal, line width=2.0pt, mark=*] 
  table[row sep=crcr]{%
0.185101829426493   1.515452660515000e-02 \\
0.134359609879251   5.108443376894000e-03 \\
0.096923683183123   1.724567801878000e-03 \\
0.068774852532456   5.777559016212801e-04 \\
0.048201665231121   2.023574333953484e-04 \\
0.033765166440742   6.954828789362314e-05 \\
0.024216305406150   2.417641182355588e-05 \\
0.017228947735558   8.601302611166890e-06 \\
};

\addplot [color=green, line width=2.0pt, mark=*]
  table[row sep=crcr]{%
0.185101829426493   1.406280898653000e-03 \\
0.134359609879251   3.573907157187062e-04 \\
0.096923683183123   8.962435038196228e-05 \\
0.068774852532456   2.141453372402536e-05 \\
0.048201665231121   5.468693579045583e-06 \\
0.033765166440742   1.332675068687397e-06 \\
0.024216305406150   3.334240665045801e-07 \\
0.017228947735558   8.229837231791981e-08 \\
};

\node[right, align=left, text=black, font=\footnotesize]
at (axis cs:0.0405,7.0e-2) {$1$};

\addplot [color=black, line width=1.5pt]
  table[row sep=crcr]{%
0.040   8.00e-2\\
0.030   6.00e-2\\
0.040   6.00e-2\\
0.040   8.00e-2\\
};

\node[right, align=left, text=black, font=\footnotesize]
at (axis cs:0.0405,3.5e-3) {$2$};

\addplot [color=black, line width=1.5pt]
  table[row sep=crcr]{%
0.040   5.00e-3\\
0.030   2.81e-3\\
0.040   2.81e-3\\
0.040   5.00e-3\\
};

\node[right, align=left, text=black, font=\footnotesize]
at (axis cs:0.0405,3.5e-5) {$3$};

\addplot [color=black, line width=1.5pt]
  table[row sep=crcr]{%
0.040   5.00e-5\\
0.030   2.11e-5\\
0.040   2.11e-5\\
0.040   5.00e-5\\
};

\node[right, align=left, text=black, font=\footnotesize]
at (axis cs:0.0405,4.5e-7) {$4$};

\addplot [color=black, line width=1.5pt]
  table[row sep=crcr]{%
0.040   8.00e-7\\
0.030   2.53e-7\\
0.040   2.53e-7\\
0.040   8.00e-7\\
};

\end{axis}
\end{tikzpicture}%}
          \caption{CDG errors w.r.t.~the mesh size~$h$.}
        \label{fig:Errors_h_DG}
    \end{subfigure}    
    \begin{subfigure}[b]{0.33\textwidth}
          \resizebox{\textwidth}{!}{\begin{tikzpicture}
\begin{axis}[%
width=3in,
height=2in,
at={(2.6in,1.099in)},
scale only axis,
xmin=15,
xmax=65,
xminorticks=true,
xlabel = {$\sqrt{N_\mathrm{DOFs}}$},
ylabel = {Error},
xtick = {15,25,35,45,55,65},
x tick label style={
/pgf/number format/.cd, fixed, fixed zerofill,
precision=0},
ymode=log,
ymin=1e-11,
ymax=1e+01,
yminorticks=true,
axis background/.style={fill=white},
title style={font=\bfseries},
xmajorgrids,
xminorgrids,
ymajorgrids,
yminorgrids,
legend style={at={(0.40,0.25)},legend cell align=left, draw=white!15!black}
]
              
\addplot [color=blue, line width=2.0pt, mark=square*]
  table[row sep=crcr]{%
1.732050807568877e+01   8.146627209436224e-01 \\
2.449489742783178e+01   1.909882455731717e-01 \\
3.162277660168379e+01   1.615501362230014e-02 \\
3.872983346207417e+01   1.508272806529846e-03 \\
4.582575694955840e+01   7.874575113382561e-05 \\
5.291502622129181e+01   4.608884914039619e-06 \\
6.000000000000000e+01   1.805279608779096e-07 \\
};
\addlegendentry{CDG-Error}

\addplot [color=orange, line width=2.0pt, mark=*]
  table[row sep=crcr]{%
1.732050807568877e+01   2.292589086788441e-02 \\
2.449489742783178e+01   1.467625541318020e-03 \\
3.162277660168379e+01   9.515693681257038e-05 \\
3.872983346207417e+01   5.890611183642446e-06 \\
4.582575694955840e+01   2.799066417954190e-07 \\
5.291502622129181e+01   1.150845564347450e-08 \\
6.000000000000000e+01   4.065194484857921e-10 \\
};
\addlegendentry{$L^2$-Error}

\end{axis}
\end{tikzpicture}%}
          \caption{Errors w.r.t.~the polynomial order~$p$.}
        \label{fig:Errors_p}
    \end{subfigure}
     \caption{Computed errors in $L^2$ norm (a) and CDG norm (b) w.r.t.~the mesh size~$h$ and computed errors in $L^2$ norm and CDG norm (c) w.r.t.~the polynomial order~$p$.}
\end{figure}
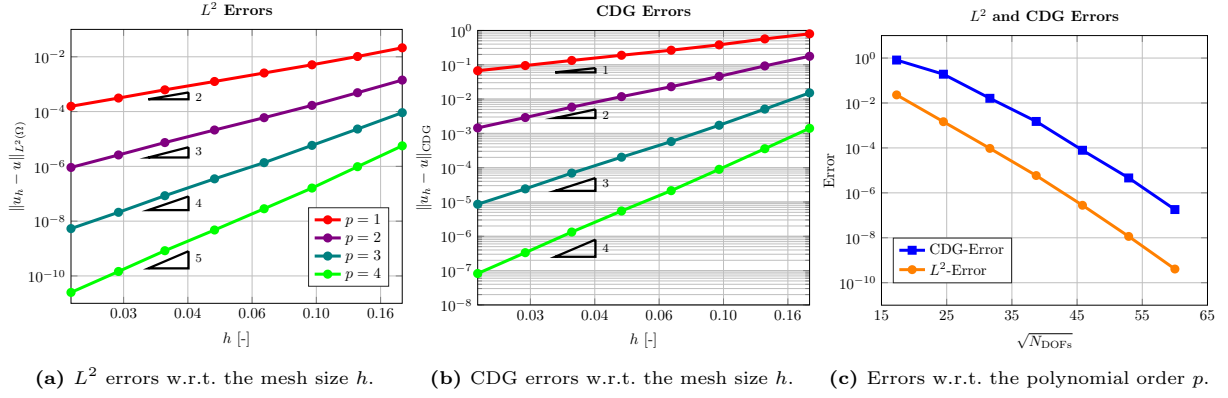
We perform a convergence test for uniform degrees of approximation~$p=1,\, 2,\, 3,\, 4$ and regular Voronoi meshes, using, for each degree, eight different refinements with number of elements $N_\mathrm{el} = 100,\, 200,\, 400, \dots,$\, $12800$.
The weighted-average parameters~$\alpha_F$ and the parameter~$\gamma$ in Assumption~\ref{asm:xiF} are set as 
$\alpha_F=1$ if $\boldsymbol{n}_F\cdot(1,0)^{\top} \geq 0$ (and~$\alpha_F = 0$ otherwise), and $\gamma=0.9$.
\par
In Figure \ref{fig:Errors_h_L2}, we report the computed errors in $L^2(\Omega)$ norm, 
and observe a decrease in the error with order~$\mathcal{O}(h^{p+1})$. 
Moreover, in Figure~\ref{fig:Errors_h_DG}, the errors in the CDG norm show a convergence of order~$\mathcal{O}(h^p)$. Finally, in Figure~\ref{fig:Errors_p}, we report the errors in the CDG and $L^2(\Omega)$ norms for the~$p$-version of the method, namely, maintaining a fixed mesh of 100 elements ($h \approx 0.1851$) and increasing the polynomial order $p=1,\dots,7$. We observe an exponential decay of
the error of order $\mathcal{O}(e^{-c\sqrt{N_\mathrm{DOFs}}})$.
\subsection{Comparison between CDG, BR2, and LDG methods}
\label{sec:test_case_methods_comparison}
As a second test case, we discuss the computational efficiency of the CDG method with respect to the BR2 and LDG methods. In particular, we discuss the LDG method by considering $\LDGf$ and $\LDGw$ versions separately. 
For~$\LDGw$, we have adopted  
the same choice of $\alpha_F$ as for the CDG method, namely, we consider $\alpha_F=1$ if $\beta_F = \boldsymbol{n}_F\cdot(1,0)^{\top} \geq 0$ (and~$\alpha_F = 0$ otherwise). We test different polynomial degrees $p=2,4$ and, for each degree, five different mesh Voronoi-type meshes with $N_\mathrm{el}= 3200,6400,12800,25600,51200$. 
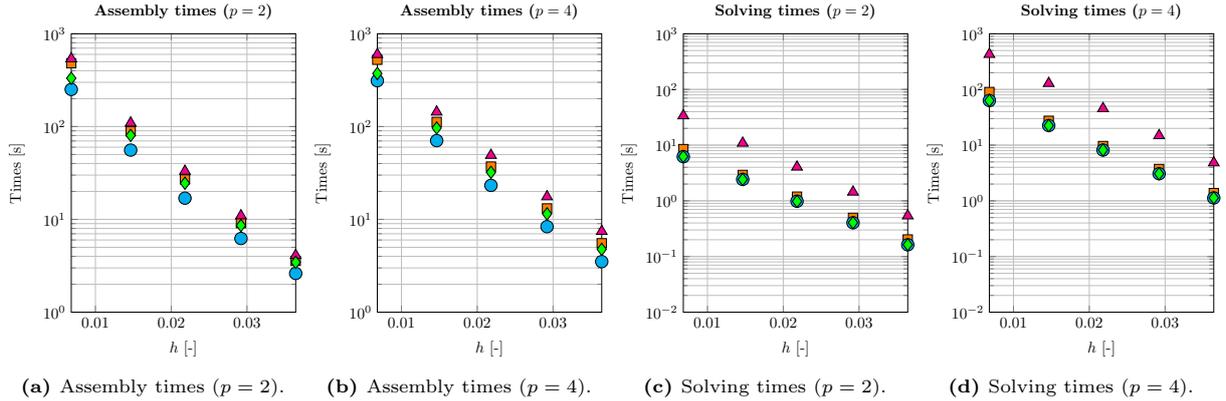
\begin{figure}[t!]

    \begin{subfigure}[b]{0.245\textwidth}
          \resizebox{\textwidth}{!}{\definecolor{mycolor2}{rgb}{0.00000,1.00000,1.00000}%
\begin{tikzpicture}
\begin{axis}[%
width=2in,
height=2in,
at={(2.6in,1.099in)},
scale only axis,
xmode=log,
xmin=0.008636811860669,
xmax=0.033765166440742,
xminorticks=true,
xlabel = {$h$ [-]},
ylabel = {Times [s]},
xticklabel={\pgfmathparse{exp(\tick)}\pgfmathprintnumber{\pgfmathresult}},
x tick label style={
/pgf/number format/.cd, fixed, fixed zerofill,
precision=2},
ymode=log,
ymin=1,
ymax=1000,
yminorticks=true,
axis background/.style={fill=white},
title style={font=\bfseries},
xmajorgrids,
xminorgrids,
ymajorgrids,
yminorgrids,
legend style={at={(0.96,0.95)},legend cell align=left, draw=white!15!black}
]

\addplot [only marks, color=orange, line width=0.5pt, mark=square*,  mark size=3.0pt, mark options={fill=orange,draw=black}]
  table[row sep=crcr]{%
0.185101829426493   0.102795 \\
0.134359609879251   0.155045 \\
0.096923683183123   0.316491 \\
0.068774852532456   0.670189 \\
0.048201665231121   1.488096 \\
0.033765166440742   3.571106 \\
0.024216305406150   9.115569 \\
0.017228947735558   26.92537 \\
0.012388797881768   88.86353 \\
0.008636811860669   480.7655 \\
};
\addlegendentry{$\LDGw$}

\addplot [only marks, color=magenta, line width=0.5pt, mark=triangle*,  mark size=4.0pt, mark options={fill=magenta,draw=black}]
  table[row sep=crcr]{%
0.185101829426493   0.108754 \\
0.134359609879251   0.172023 \\
0.096923683183123   0.352511 \\
0.068774852532456   0.752717 \\
0.048201665231121   1.684777 \\
0.033765166440742   4.099274 \\
0.024216305406150   10.94061 \\
0.017228947735558   33.05750 \\
0.012388797881768   109.4380 \\
0.008636811860669   538.5046 \\
};
\addlegendentry{$\LDGf$}

\addplot [only marks, color=cyan, line width=0.5pt, mark=*,  mark size=4.0pt, mark options={fill=cyan,draw=black}]
  table[row sep=crcr]{%
0.185101829426493   0.084339 \\
0.134359609879251   0.124722 \\
0.096923683183123   0.253036 \\
0.068774852532456   0.525580 \\
0.048201665231121   1.137922 \\
0.033765166440742   2.615263 \\
0.024216305406150   6.221774 \\
0.017228947735558   16.92096 \\
0.012388797881768   55.78499 \\
0.008636811860669   251.2783 \\
};
\addlegendentry{CDG}

\addplot [only marks, color=green, line width=0.5pt, mark=diamond*,  mark size=4.0pt, mark options={fill=green,draw=black}]
  table[row sep=crcr]{%
0.033765166440742   3.445701 \\
0.024216305406150   8.589941 \\
0.017228947735558   24.51255 \\
0.012388797881768   80.34138 \\
0.008636811860669   331.9446 \\
};
\addlegendentry{BR2}
\end{axis}
\end{tikzpicture}%}
          \caption{Assembly times ($p=2$).}
        \label{fig:Ass_times_p2}
    \end{subfigure}
    \begin{subfigure}[b]{0.245\textwidth}
          \resizebox{\textwidth}{!}{\definecolor{mycolor2}{rgb}{0.00000,1.00000,1.00000}%
\begin{tikzpicture}
\begin{axis}[%
width=2in,
height=2in,
at={(2.6in,1.099in)},
scale only axis,
xmode=log,
xmin=0.008636811860669,
xmax=0.033765166440742,
xminorticks=true,
xlabel = {$h$ [-]},
ylabel = {Times [s]},
xticklabel={\pgfmathparse{exp(\tick)}\pgfmathprintnumber{\pgfmathresult}},
x tick label style={
/pgf/number format/.cd, fixed, fixed zerofill,
precision=2},
ymode=log,
ymin=1,
ymax=1000,
yminorticks=true,
axis background/.style={fill=white},
title style={font=\bfseries},
xmajorgrids,
xminorgrids,
ymajorgrids,
yminorgrids,
legend style={at={(0.96,0.95)},legend cell align=left, draw=white!15!black}
]

\addplot [only marks, color=orange, line width=0.5pt, mark=square*,  mark size=3.0pt, mark options={fill=orange,draw=black}]
  table[row sep=crcr]{%
0.185101829426493   0.150645 \\
0.134359609879251   0.254393 \\
0.096923683183123   0.513531 \\
0.068774852532456   1.063392 \\
0.048201665231121   2.334390 \\
0.033765166440742   5.534315 \\
0.024216305406150   13.13893 \\
0.017228947735558   37.10646 \\
0.012388797881768   112.2479 \\
0.008636811860669   524.6346 \\
};

\addplot [only marks, color=magenta, line width=0.5pt, mark=triangle*,  mark size=4.0pt, mark options={fill=magenta,draw=black}]
  table[row sep=crcr]{%
0.185101829426493   0.189917 \\
0.134359609879251   0.339313 \\
0.096923683183123   0.699474 \\
0.068774852532456   1.503734 \\
0.048201665231121   3.240278 \\
0.033765166440742   7.433748 \\
0.024216305406150   17.54717 \\
0.017228947735558   49.08221 \\
0.012388797881768   143.6343 \\
0.008636811860669   598.9127 \\
};

\addplot [only marks, color=cyan, line width=0.5pt, mark=*,  mark size=4.0pt, mark options={fill=cyan,draw=black}]
  table[row sep=crcr]{%
0.185101829426493   0.108117 \\
0.134359609879251   0.171980 \\
0.096923683183123   0.344628 \\
0.068774852532456   0.715253 \\
0.048201665231121   1.537144 \\
0.033765166440742   3.506396 \\
0.024216305406150   8.347125 \\
0.017228947735558   23.23145 \\
0.012388797881768   70.56941 \\
0.008636811860669   312.0441 \\
};

\addplot [only marks, color=green, line width=0.5pt, mark=diamond*,  mark size=4.0pt, mark options={fill=green,draw=black}]
  table[row sep=crcr]{%
0.048201665231121   2.137951 \\
0.033765166440742   4.756351 \\
0.024216305406150   11.45844 \\
0.017228947735558   32.12761 \\
0.012388797881768   96.53730 \\
0.008636811860669   373.6126 \\
};
\end{axis}
\end{tikzpicture}%}
          \caption{Assembly times ($p=4$).}
        \label{fig:Ass_times_p4}
    \end{subfigure}
    \begin{subfigure}[b]{0.245\textwidth}
          \resizebox{\textwidth}{!}{\definecolor{mycolor2}{rgb}{0.00000,1.00000,1.00000}%
\begin{tikzpicture}
\begin{axis}[%
width=2in,
height=2in,
at={(2.6in,1.099in)},
scale only axis,
xmode=log,
xmin=0.008636811860669,
xmax=0.033765166440742,
xminorticks=true,
xlabel = {$h$ [-]},
ylabel = {Times [s]},
xticklabel={\pgfmathparse{exp(\tick)}\pgfmathprintnumber{\pgfmathresult}},
x tick label style={
/pgf/number format/.cd, fixed, fixed zerofill,
precision=2},
ymode=log,
ymin=1e-2,
ymax=1000,
yminorticks=true,
axis background/.style={fill=white},
title style={font=\bfseries},
xmajorgrids,
xminorgrids,
ymajorgrids,
yminorgrids,
legend style={at={(0.50,0.35)},legend cell align=left, draw=white!15!black}
]
    
\addplot [only marks, color=orange, line width=0.5pt, mark=square*,  mark size=3.0pt, mark options={fill=orange,draw=black}]
  table[row sep=crcr]{%
0.185101829426493   0.002641 \\
0.134359609879251   0.005499 \\
0.096923683183123   0.012752 \\
0.068774852532456   0.032970 \\
0.048201665231121   0.083479 \\
0.033765166440742   0.202548 \\
0.024216305406150   0.495542 \\
0.017228947735558   1.188554 \\
0.012388797881768   2.921672 \\
0.008636811860669   8.496603 \\
};

\addplot [only marks, color=magenta, line width=0.5pt, mark=triangle*,  mark size=4.0pt, mark options={fill=magenta,draw=black}]
  table[row sep=crcr]{%
0.185101829426493   0.004727 \\
0.134359609879251   0.011965 \\
0.096923683183123   0.029622 \\
0.068774852532456   0.082327 \\
0.048201665231121   0.210187 \\
0.033765166440742   0.538146 \\
0.024216305406150   1.438232 \\
0.017228947735558   4.046540 \\
0.012388797881768   10.87280 \\
0.008636811860669   33.78916 \\
};
          
\addplot [only marks, color=cyan, line width=0.5pt, mark=*,  mark size=4.0pt, mark options={fill=cyan,draw=black}]
  table[row sep=crcr]{%
0.185101829426493   0.002380 \\
0.134359609879251   0.005084 \\
0.096923683183123   0.011673 \\
0.068774852532456   0.026637 \\
0.048201665231121   0.067379 \\
0.033765166440742   0.162329 \\
0.024216305406150   0.403908 \\
0.017228947735558   0.982518 \\
0.012388797881768   2.423196 \\
0.008636811860669   6.212517 \\
};

\addplot [only marks, color=green, line width=0.5pt, mark=diamond*,  mark size=4.0pt, mark options={fill=green,draw=black}]
  table[row sep=crcr]{%
0.185101829426493   0.002456 \\
0.134359609879251   0.004715 \\
0.096923683183123   0.011417 \\
0.068774852532456   0.026004 \\
0.048201665231121   0.068056 \\
0.033765166440742   0.163103 \\
0.024216305406150   0.405597 \\
0.017228947735558   0.978113 \\
0.012388797881768   2.422893 \\
0.008636811860669   6.247608 \\
};

\end{axis}
\end{tikzpicture}%}
          \caption{Solving times ($p=2$).}
        \label{fig:Solv_times_p2}
    \end{subfigure}
    \begin{subfigure}[b]{0.245\textwidth}
          \resizebox{\textwidth}{!}{\definecolor{mycolor2}{rgb}{0.00000,1.00000,1.00000}%
\begin{tikzpicture}
\begin{axis}[%
width=2in,
height=2in,
at={(2.6in,1.099in)},
scale only axis,
xmode=log,
xmin=0.008636811860669,
xmax=0.033765166440742,
xminorticks=true,
xlabel = {$h$ [-]},
ylabel = {Times [s]},
xticklabel={\pgfmathparse{exp(\tick)}\pgfmathprintnumber{\pgfmathresult}},
x tick label style={
/pgf/number format/.cd, fixed, fixed zerofill,
precision=2},
ymode=log,
ymin=1e-2,
ymax=1000,
yminorticks=true,
axis background/.style={fill=white},
title style={font=\bfseries},
xmajorgrids,
xminorgrids,
ymajorgrids,
yminorgrids,
legend style={at={(0.90,0.85)},legend cell align=left, draw=white!15!black}
]
    
\addplot [only marks, color=orange, line width=0.5pt, mark=square*,  mark size=3.0pt, mark options={fill=orange,draw=black}]
  table[row sep=crcr]{%
0.185101829426493   0.014153 \\
0.134359609879251   0.033516 \\
0.096923683183123   0.078652 \\
0.068774852532456   0.217622 \\
0.048201665231121   0.567182 \\
0.033765166440742   1.378693 \\
0.024216305406150   3.735154 \\
0.017228947735558   9.643622 \\
0.012388797881768   27.40377 \\
0.008636811860669   90.05355 \\
};

\addplot [only marks, color=magenta, line width=0.5pt, mark=triangle*,  mark size=4.0pt, mark options={fill=magenta,draw=black}]
  table[row sep=crcr]{%
0.185101829426493   0.031160 \\
0.134359609879251   0.081497 \\
0.096923683183123   0.215524 \\
0.068774852532456   0.627253 \\
0.048201665231121   1.694275 \\
0.033765166440742   4.826582 \\
0.024216305406150   14.85551 \\
0.017228947735558   45.45046 \\
0.012388797881768   128.6702 \\
0.008636811860669   427.7784 \\
};
          
\addplot [only marks, color=cyan, line width=0.5pt, mark=*,  mark size=4.0pt, mark options={fill=cyan,draw=black}]
  table[row sep=crcr]{%
0.185101829426493   0.013333 \\
0.134359609879251   0.032290 \\
0.096923683183123   0.071576 \\
0.068774852532456   0.183235 \\
0.048201665231121   0.460417 \\
0.033765166440742   1.126231 \\
0.024216305406150   3.065965 \\
0.017228947735558   8.156845 \\
0.012388797881768   22.48380 \\
0.008636811860669   63.28253 \\
};

\addplot [only marks, color=green, line width=0.5pt, mark=diamond*,  mark size=4.0pt, mark options={fill=green,draw=black}]
  table[row sep=crcr]{%
0.185101829426493   0.012699 \\
0.134359609879251   0.032214 \\
0.096923683183123   0.076734 \\
0.068774852532456   0.186529 \\
0.048201665231121   0.456001 \\
0.033765166440742   1.131390 \\
0.024216305406150   3.067231 \\
0.017228947735558   8.185233 \\
0.012388797881768   22.45852 \\
0.008636811860669   63.20816 \\
};

\end{axis}
\end{tikzpicture}%}
          \caption{Solving times ($p=4$).}
        \label{fig:Solv_times_p4}
    \end{subfigure}
    \caption{Comparison of computational times for the different methods (CDG, BR2, $\LDGw$, and $\LDGf$) and different polynomial orders ($p=2,4$). The considered times are divided in assembly time (a--b) and solving time (c--d).} 
     \label{fig:Computational_Times_Methods}
\end{figure}
We first consider the assembly times according to Algorithms \ref{alg:CDG-BR2} and \ref{alg:LDG}, for the CDG, BR2, and LDG methods.  
In Figures~\ref{fig:Ass_times_p2} and~\ref{fig:Ass_times_p4}, we report the assembly times computed for different polynomial degrees $p=2$ and~$p = 4$. We observe that the CDG method provides a faster assembly compared to all the other methods. In particular, for the finest mesh with $p=4$, the BR2 assembly requires approximately~$19.55\%$ more time than CDG. This is due to the double computations in the face integrals resulting from the choice~$\alpha_F = 1/2$.  
Concerning the LDG methods, the assembly times are higher by approximately~$67.95\%$ (for~$\LDGw$) and~$91.66\%$ (for~$\LDGf$) than that of CDG.  
These large values are mostly due to the presence of the two nested loops over the neighbors, which also causes the larger stencil (see Algorithm \ref{alg:LDG}).
\par
In Figures \ref{fig:Solv_times_p2} and~\ref{fig:Solv_times_p4}, we report the corresponding solving times for different polynomial degrees $p=2$ and~$p = 4$. 
We employ the \texttt{backslash} operator as 
linear solver to ensure a fair comparison, but we expect the more compact sparsity patterns of CDG and BR2 to have a positive impact on the performance of iterative solvers as well. The results show similar solving times for the BR2 and CDG methods, which can be attributed to their identical number of nonzero entries (see Table \ref{tab:conditioning}) and the same stencil (see Figure~\ref{fig:Sparsity_Pattern_Methods}). In particular, the identical sparsity patterns of CDG and BR2 for polygonal meshes occur because the further reduction of nonzero entries obtained in~\cite[\S5.1]{Periaire_Persson:2008} for the CDG method on simplicial meshes is lost. Indeed, on simplices, this reduction results from the use of Lagrangian bases with some degrees of freedom associated with nodes on the element edges. Such a construction is not possible for polygonal elements of arbitrary shape, whose number of edges does not match the dimension of the underlying polynomial space.
Comparing the solving times of CDG and BR2 with those of the LDG methods reveals significantly longer times for the latter. 
In particular, the $\LDGf$ method requires approximately $577.78\%$ more time than CDG, while $\LDGw$ requires about $42.85\%$ more. This is likely due to the much higher number of nonzero entries in the stiffness matrix $\mathcal{A}$, as reported in Table \ref{tab:conditioning} and illustrated in Figure~\ref{fig:Sparsity_Pattern_Methods}. On the other hand, the LDG matrices exhibit better conditioning than those of the CDG method. 
The~$2$-condition number~$\kappa_2(\calA)$ grows
with the same order~$\mathcal{O}(h^{-2})$ for all the discussed methods.
\par
\begin{table}[t]
\begin{subtable}{.49\linewidth}\centering
\begin{tabular}{|c|C|C|C|}
\hline
\textbf{Elements}               
                                & $\boldsymbol{3200}$           
                                & $\boldsymbol{12800}$
                                & $\boldsymbol{51200}$ \\
\textbf{\footnotesize{($\boldsymbol{h}$)}} 
& \footnotesize{($\boldsymbol{0.0337}$)}
& \footnotesize{($\boldsymbol{0.0172}$)} 
& \footnotesize{($\boldsymbol{0.0086}$)} \\ \hline
\multirow{2}{*}{\textbf{CDG}}              
                                & $7.18\times10^{4}$            
                                & $2.75\times10^{5}$            
                                & $1.22\times10^{6}$          \\  
                                & \footnotesize{($197\,280$)}    
                                & \footnotesize{($796\,824$)}  
                                & \footnotesize{($3\,202\,596$)} \\ \hline
\multirow{2}{*}{\textbf{BR2}}              
                                & $6.72\times10^{4}$            
                                & $2.55\times10^{5}$            
                                & $1.10\times10^{6}$          \\  
                                & \footnotesize{($197\,280$)}    
                                & \footnotesize{($796\,824$)}  
                                & \footnotesize{($3\,202\,596$)} \\ \hline   
\multirow{2}{*}{$\boldsymbol{\LDGw}$}             
                                & $7.78\times10^{4}$            
                                & $3.03\times10^{5}$            
                                & $1.36\times10^{6}$          \\  
                                & \footnotesize{($257\,742$)}   
                                & \footnotesize{($1\,043\,838$)}
                                & \footnotesize{($4\,200\,642$)} \\ \hline   
\multirow{2}{*}{$\boldsymbol{\LDGf}$}      
                                & $6.36\times10^{4}$            
                                & $2.53\times10^{5}$            
                                & $1.05\times10^{6}$          \\  
                                & \footnotesize{($533\,268$)}   
                                & \footnotesize{($2\,170\,296$)} 
                                & \footnotesize{($8\,757\,702$)} \\ \hline      
\end{tabular}
\caption{Condition number and nonzero entries of $\mathcal{A}$ ($p=1$).}
\label{tab:cond_p1}
\end{subtable}%
\hspace{+0.0em}
\begin{subtable}{.49\linewidth}\centering
\begin{tabular}{|c|C|C|C|}
\hline
\textbf{Elements}               & $\boldsymbol{3200}$           
                                & $\boldsymbol{12800}$
                                & $\boldsymbol{51200}$ \\
\textbf{\footnotesize{($\boldsymbol{h}$)}} 
& \footnotesize{($\boldsymbol{0.0337}$)}
& \footnotesize{($\boldsymbol{0.0172}$)} 
& \footnotesize{($\boldsymbol{0.0086}$)} \\ \hline
\multirow{2}{*}{\textbf{CDG}}   & $3.93\times10^{5}$            
                                & $1.47\times10^{6}$            
                                & $6.42\times10^{6}$          \\
                                & \footnotesize{($789\,120$)}   
                                & \footnotesize{($3\,187\,296$)}
                                & \footnotesize{($12\,810\,384$)} \\ \hline
\multirow{2}{*}{\textbf{BR2}}   
                                & $3.47\times10^{5}$            
                                & $1.34\times10^{6}$            
                                & $5.72\times10^{6}$          \\
                                & \footnotesize{($789\,120$)}   
                                & \footnotesize{($3\,187\,296$)}
                                & \footnotesize{($12\,810\,384$)} \\ \hline   
\multirow{2}{*}{$\boldsymbol{\LDGw}$}    
                                & $3.71\times10^{5}$            
                                & $1.42\times10^{6}$    
                                & $6.16\times10^{6}$ \\
                                & \footnotesize{($1\,030\,968$)}
                                & \footnotesize{($4\,175\,352$)} 
                                & \footnotesize{($16\,802\,568$)} \\ \hline   
\multirow{2}{*}{$\boldsymbol{\LDGf}$}    
                                & $2.85\times10^{5}$            
                                & $1.13\times10^{6}$          
                                & $4.75\times10^{6}$            \\
                                & \footnotesize{($2\,133\,072$)} 
                                & \footnotesize{($8\,681\,184$)} 
                                & \footnotesize{($35\,030\,808$)} \\ \hline      
\end{tabular}
\caption{Condition number and nonzero entries of $\mathcal{A}$ ($p=2$).}
\label{tab:cond_p2}
\end{subtable}
\\[6pt]
\hspace{-0.6em}
\begin{subtable}{.49\linewidth}\centering
\begin{tabular}{|c|C|C|C|}
\hline
\textbf{Elements}               
                                & $\boldsymbol{3200}$           
                                & $\boldsymbol{12800}$          
                                & $\boldsymbol{51200}$ \\
\textbf{\footnotesize{($\boldsymbol{h}$)}} 
& \footnotesize{($\boldsymbol{0.0337}$)}
& \footnotesize{($\boldsymbol{0.0172}$)}
& \footnotesize{($\boldsymbol{0.0086}$)} \\ \hline
\multirow{2}{*}{\textbf{CDG}}   
                                & $1.17\times10^{6}$
                                & $4.86\times10^{6}$
                                & $1.89\times10^{7}$          \\  
                                & \footnotesize{($2\,192\,000$)} 
                                & \footnotesize{($8\,853\,600$)} 
                                & \footnotesize{($35\,584\,400$)} \\ \hline
\multirow{2}{*}{\textbf{BR2}}   
                                & $1.05\times10^{6}$
                                & $4.26\times10^{6}$
                                & $1.76\times10^{7}$          \\  
                                & \footnotesize{($2\,192\,000$)} 
                                & \footnotesize{($8\,853\,600$)} 
                                & \footnotesize{($35\,584\,400$)} \\ \hline
\multirow{2}{*}{$\boldsymbol{\LDGw}$}            
                                & $1.09\times10^{6}$            
                                & $4.20\times10^{6}$            
                                & $1.81\times10^{7}$          \\  
                                & \footnotesize{($2\,863\,800$)}
                                & \footnotesize{($11\,598\,200$)} 
                                & \footnotesize{($46\,673\,800$)} \\ \hline    
\multirow{2}{*}{$\boldsymbol{\LDGf}$}    
                                & $8.23\times10^{5}$            
                                & $3.33\times10^{6}$              
                                & $1.36\times10^{7}$         \\   
                                & \footnotesize{($5\,925\,200$)}   
                                & \footnotesize{($24\,114\,400$)}  
                                & \footnotesize{($97\,307\,800$)} \\ \hline      
\end{tabular}
\caption{Condition number and nonzero entries of $\mathcal{A}$ ($p=3$).}
\label{tab:cond_p3}
\end{subtable}
\begin{subtable}{.49\linewidth}\centering
\begin{tabular}{|c|C|C|C|}
\hline
\textbf{Elements}     
& $\boldsymbol{3200}$           
& $\boldsymbol{12800}$          
& $\boldsymbol{51200}$ \\
\textbf{\footnotesize{($\boldsymbol{h}$)}} 
& \footnotesize{($\boldsymbol{0.0337}$)}
& \footnotesize{($\boldsymbol{0.0172}$)}
& \footnotesize{($\boldsymbol{0.0086}$)} \\ \hline
\multirow{2}{*}{\textbf{CDG}}             
                                & $2.78\times10^{6}$            
                                & $1.25\times10^{7}$ 
                                & $4.87\times10^{7}$ \\   
                                & \footnotesize{($4\,932\,000$)} 
                                & \footnotesize{($19\,920\,600$)}
                                & \footnotesize{($80\,064\,900$)}\\ \hline
\multirow{2}{*}{\textbf{BR2}}             
                                & $1.05\times10^{6}$            
                                & $4.26\times10^{6}$ 
                                & $1.76\times10^{7}$ \\   
                                & \footnotesize{($4\,932\,000$)} 
                                & \footnotesize{($19\,920\,600$)}
                                & \footnotesize{($80\,064\,900$)}\\ \hline
\multirow{2}{*}{$\boldsymbol{\LDGw}$}     
                                & $2.68\times10^{6}$            
                                & $5.25\times10^{6}$            
                                & $4.46\times10^{7}$          \\
                                & \footnotesize{($6\,443\,550$)}
                                & \footnotesize{($12\,976\,200$)} 
                                & \footnotesize{($105\,016\,050$)} \\ \hline 
\multirow{2}{*}{$\boldsymbol{\LDGf}$} 
                                & $1.97\times10^{6}$            
                                & $8.02\times10^{6}$            
                                & $3.30\times10^{7}$          \\   
                                & \footnotesize{($13\,331\,700$)}   & \footnotesize{($54\,257\,400$)}   & \footnotesize{($218\,942\,550$)} \\ \hline      
\end{tabular}
\caption{Condition number and nonzero entries of $\mathcal{A}$ ($p=4$).}
\label{tab:cond_p4}
\end{subtable}
\caption{Condition number (first row in each cell) and nonzero entries of $\mathcal{A}$ (second row in each cell) for the different methods (CDG, BR2, $\LDGw$, and $\LDGf$) and for different polynomial orders ($p=1,2,3,4$).}
\label{tab:conditioning}
\end{table}
\begin{figure}[t!]
	\centering
	{\includegraphics[width=\textwidth]{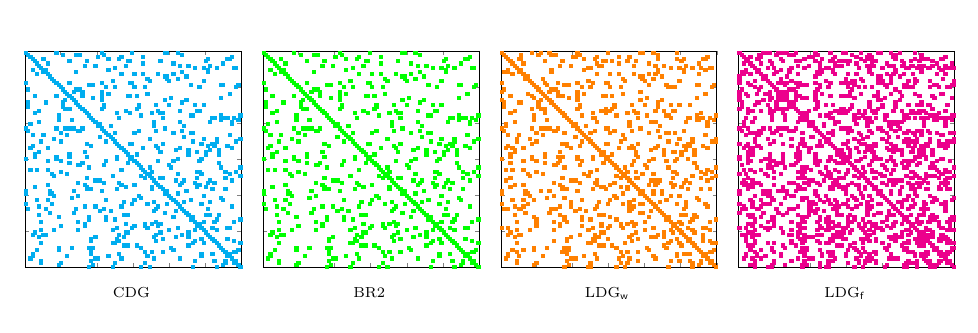}}
    \caption{Comparison of sparsity patterns for the different methods (CDG, BR2, $\LDGw$, and $\LDGf$) with a mesh of 100 elements and polynomial degree $p=1$.}
     \label{fig:Sparsity_Pattern_Methods}
\end{figure}

\subsection{Impact of the parameter \texorpdfstring{$\xiF$}{chi_F} for different types of mesh}
\label{sec:chi_F}
In this section, we discuss the impact of the value of the parameter $\xiF$ on the convergence of the numerical solution and on the coercivity of the bilinear form associated with the CDG method. For this test, we consider four different types of mesh: structured triangular, Cartesian, Voronoi, and agglomerated meshes. In the latter, the agglomeration is performed using the MAGNET library~\cite{antonietti_magnet_2025}. Examples of each mesh family are shown in Figure~\ref{fig:test_case_5.4_meshes} below.
\par
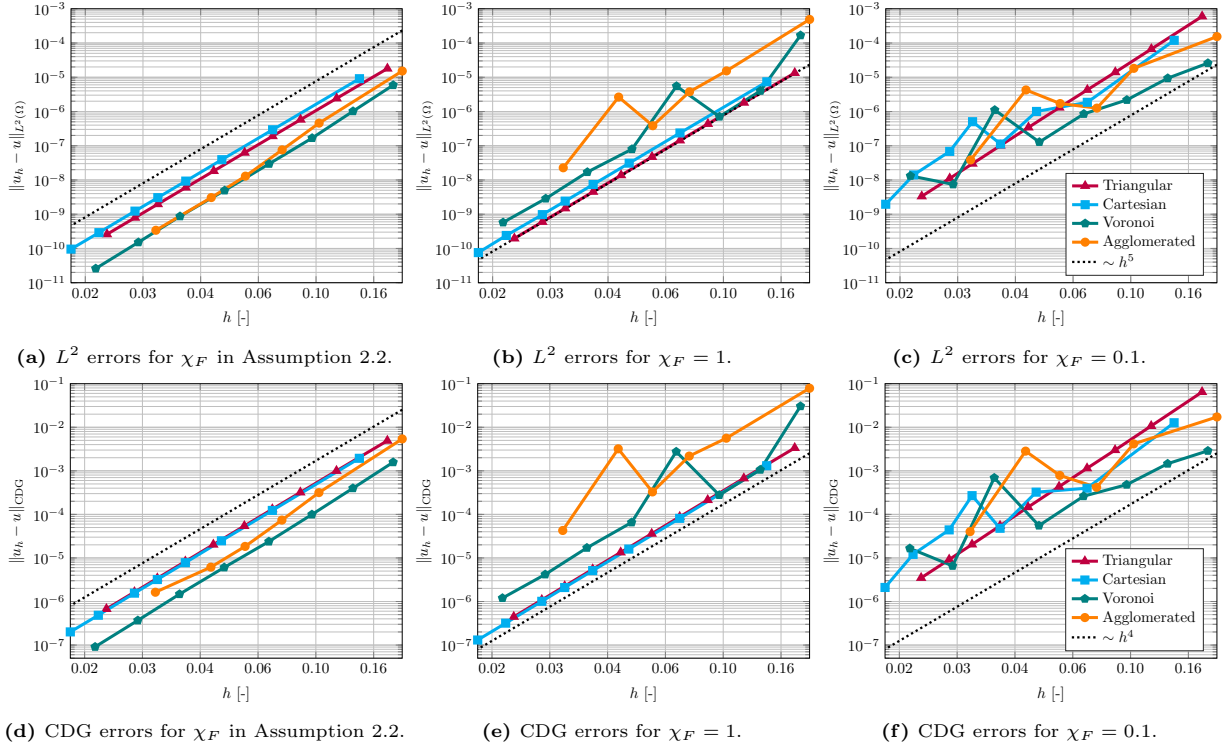
\begin{figure}[t!]
    \begin{subfigure}[b]{0.33\textwidth}
          \resizebox{\textwidth}{!}{\begin{tikzpicture}
\begin{axis}[%
width=3.00in,
height=2.00in,
at={(2.6in,1.099in)},
scale only axis,
xmode=log,
xmin=0.014142135659262,
xmax=0.198983865061495,
xminorticks=true,
xlabel = {$h$ [-]},
ylabel = {$\|u_h-u\|_{L^2(\Omega)}$},
xticklabel={\pgfmathparse{exp(\tick)}\pgfmathprintnumber{\pgfmathresult}},
x tick label style={
/pgf/number format/.cd, fixed, fixed zerofill,
precision=2},
ymode=log,
ymin=1e-11,
ymax=1e-3,
yminorticks=true,
axis background/.style={fill=white},
title style={font=\bfseries},
xmajorgrids,
xminorgrids,
ymajorgrids,
yminorgrids,
legend style={at={(0.96,0.40)},legend cell align=left, draw=white!15!black}
]

\addplot [color=purple, line width=2.0pt, mark=triangle*]
  table[row sep=crcr]{%
0.176776695296637   1.792707545327093e-05 \\
0.117852073006800   2.416812634074811e-06 \\
0.088388347648318   5.791492921094836e-07 \\
0.070710678118655   1.907402996240128e-07 \\
0.056568542494924   6.272606727959129e-08 \\
0.044194173824159   1.830623601793973e-08 \\
0.035355339059327   6.009115571432744e-09 \\
0.028284271247462   1.971560727815801e-09 \\
0.023570697444073   7.929409917118179e-10 \\
0.018857123640683   2.600186789334765e-10 \\
};

\addplot [color=cyan, line width=2.0pt, mark=square*]
  table[row sep=crcr]{%
0.141421356592613   9.083743710130632e-06 \\
0.070710678296307   2.932599377833020e-07 \\
0.047140452197538   3.915531774071636e-08 \\
0.035355339148153   9.281455655213969e-09 \\
0.028284271318523   3.050482228153148e-09 \\
0.023570226098769   1.232779362429422e-09 \\
0.017677669574077   2.899799298453101e-10 \\
0.014142135659262   9.569855836389577e-11 \\
};

\addplot [color=teal, line width=2.0pt, mark=pentagon*]
  table[row sep=crcr]{%
0.185101829426493   0.000005890611184 \\
0.134359609879251   0.000001019720338 \\
0.096923683183123   0.000000167663469 \\
0.068774852532456   0.000000029634671 \\
0.048201665231121   0.000000004938109 \\
0.033765166440742   0.000000000868204 \\
0.024216305406150   0.000000000151183 \\
0.017228947735558   0.000000000025959 \\
};

\addplot [color=orange, line width=2.0pt, mark=*]
  table[row sep=crcr]{%
0.198983865061495   0.000015217768239 \\
0.102476025836552   0.000000457692829 \\
0.076298886200117   0.000000075777047 \\
0.056914832977051   0.000000012935313 \\
0.043328680110681   0.000000003063512 \\
0.027848260524380   0.000000000339825 \\
};

\addplot [color=black, dotted, line width=1.5pt]
  table[row sep=crcr]{%
0.198983865061495    2.335934745601948e-04\\
0.014142135659262    4.643348069415500e-10\\
};

\end{axis}
\end{tikzpicture}%}
          \caption{$L^2$ errors for~$\xiF$ in Assumption 2.2.}
        \label{fig:Errors_h_L2_chiF}
    \end{subfigure}
    \begin{subfigure}[b]{0.33\textwidth}
          \resizebox{\textwidth}{!}{\begin{tikzpicture}
\begin{axis}[%
width=3.00in,
height=2.00in,
at={(2.6in,1.099in)},
scale only axis,
xmode=log,
xmin=0.014142135659262,
xmax=0.198983865061495,
xminorticks=true,
xlabel = {$h$ [-]},
ylabel = {$\|u_h-u\|_{L^2(\Omega)}$},
xticklabel={\pgfmathparse{exp(\tick)}\pgfmathprintnumber{\pgfmathresult}},
x tick label style={
/pgf/number format/.cd, fixed, fixed zerofill,
precision=2},
ymode=log,
ymin=1e-11,
ymax=1e-3,
yminorticks=true,
axis background/.style={fill=white},
title style={font=\bfseries},
xmajorgrids,
xminorgrids,
ymajorgrids,
yminorgrids,
legend style={at={(0.96,0.40)},legend cell align=left, draw=white!15!black}
]

\addplot [color=purple, line width=2.0pt, mark=triangle*]
  table[row sep=crcr]{%
0.176776695296637   1.337725588696407e-05 \\
0.117852073006800   1.809131103911243e-06 \\
0.088388347648318   4.343103668246878e-07 \\
0.070710678118655   1.432125591938426e-07 \\
0.056568542494924   4.714651112548427e-08 \\
0.044194173824159   1.377330393981996e-08 \\
0.035355339059327   4.524622912766746e-09 \\
0.028284271247462   1.485459848339047e-09 \\
0.023570697444073   5.977022993706005e-10 \\
0.018857123640683   1.960852270818176e-10 \\
};

\addplot [color=cyan, line width=2.0pt, mark=square*]
  table[row sep=crcr]{%
0.141421356592613   7.368525589971084e-06 \\
0.070710678296307   2.361725451143919e-07 \\
0.047140452197538   3.100784332711394e-08 \\
0.035355339148153   7.376017733900127e-09 \\
0.028284271318523   2.417388384111082e-09 \\
0.023570226098769   9.735043690457316e-10 \\
0.017677669574077   2.399799298453101e-10 \\
0.014142135659262   7.578085778946675e-11 \\
};

\addplot [color=teal, line width=2.0pt, mark=pentagon*]
  table[row sep=crcr]{%
0.185101829426493   1.677369734981965e-04 \\
0.134359609879251   4.009842277270836e-06 \\
0.096923683183123   7.081149094582682e-07 \\
0.068774852532456   5.504171252788501e-06 \\
0.048201665231121   7.958096264750936e-08 \\
0.033765166440742   1.699749005508952e-08 \\
0.024216305406150   2.866110326151568e-09 \\
0.017228947735558   5.749423263312488e-10 \\
};

\addplot [color=orange, line width=2.0pt, mark=*]
  table[row sep=crcr]{%
0.198983865061495   0.000488807547108 \\
0.102476025836552   0.000015326506935 \\
0.076298886200117   0.000003762197616 \\
0.056914832977051   0.000000384297190 \\
0.043328680110681   0.000002642103920 \\
0.027848260524380   0.000000022710603 \\
};

\addplot [color=black, dotted, line width=1.5pt]
  table[row sep=crcr]{%
0.198983865061495    2.335934745601948e-05\\
0.014142135659262    4.643348069415500e-11\\
};

\end{axis}
\end{tikzpicture}%}
          \caption{$L^2$ errors for~$\xiF=1$.}
        \label{fig:Errors_h_L2_chi=1}
    \end{subfigure}   
    \begin{subfigure}[b]{0.33\textwidth}
          \resizebox{\textwidth}{!}{\begin{tikzpicture}
\begin{axis}[%
width=3.00in,
height=2.00in,
at={(2.6in,1.099in)},
scale only axis,
xmode=log,
xmin=0.014142135659262,
xmax=0.198983865061495,
xminorticks=true,
xlabel = {$h$ [-]},
ylabel = {$\|u_h-u\|_{L^2(\Omega)}$},
xticklabel={\pgfmathparse{exp(\tick)}\pgfmathprintnumber{\pgfmathresult}},
x tick label style={
/pgf/number format/.cd, fixed, fixed zerofill,
precision=2},
ymode=log,
ymin=1e-11,
ymax=1e-3,
yminorticks=true,
axis background/.style={fill=white},
title style={font=\bfseries},
xmajorgrids,
xminorgrids,
ymajorgrids,
yminorgrids,
legend style={at={(0.96,0.50)},legend cell align=left, draw=white!15!black}
]

\addplot [color=purple, line width=2.0pt, mark=triangle*]
  table[row sep=crcr]{%
0.176776695296637   5.996221880010621e-04 \\
0.117852073006800   6.665698641465187e-05 \\
0.088388347648318   1.405446578558408e-05 \\
0.070710678118655   4.293893192925027e-06 \\
0.056568542494924   1.301597695590333e-06 \\
0.044194173824159   3.424068671646818e-07 \\
0.035355339059327   1.013988536231059e-07 \\
0.028284271247462   2.997399864215090e-08 \\
0.023570697444073   1.111691167799140e-08 \\
0.018857123640683   3.333770308156580e-09 \\
};
\addlegendentry{Triangular}

\addplot [color=cyan, line width=2.0pt, mark=square*]
  table[row sep=crcr]{%
0.141421356592613   1.198731187425877e-04 \\
0.070710678296307   1.854264564126671e-06 \\
0.047140452197538   1.004062339888241e-06 \\
0.035355339148153   1.123673275524957e-07 \\
0.028284271318523   5.036991538753130e-07 \\
0.023570226098769   6.838713050312869e-08 \\
0.017677669574077   1.405877428615309e-08 \\
0.014142135659262   1.955676275329804e-09 \\
};
\addlegendentry{Cartesian}

\addplot [color=teal, line width=2.0pt, mark=pentagon*]
  table[row sep=crcr]{%
0.185101829426493   2.586520571004273e-05 \\
0.134359609879251   9.383992932013325e-06 \\
0.096923683183123   2.177047846688053e-06 \\
0.068774852532456   8.502553910124020e-07 \\
0.048201665231121   1.281672597437476e-07 \\
0.033765166440742   1.107036277459412e-06 \\
0.024216305406150   7.434971480735172e-09 \\
0.017228947735558   1.318219467462726e-08 \\
};
\addlegendentry{Voronoi}

\addplot [color=orange, line width=2.0pt, mark=*]
  table[row sep=crcr]{%
0.198983865061495   0.000154188096732 \\
0.102476025836552   0.000018215397107 \\
0.076298886200117   0.000001256144614 \\
0.056914832977051   0.000001708566425 \\
0.043328680110681   0.000004258144356 \\
0.027848260524380   0.000000038529322 \\
};
\addlegendentry{Agglomerated}

\addplot [color=black, dotted, line width=1.5pt]
  table[row sep=crcr]{%
0.198983865061495    2.335934745601948e-05\\
0.014142135659262    4.643348069415500e-11\\
};
\addlegendentry{$\sim h^5$}

\end{axis}
\end{tikzpicture}%}
          \caption{$L^2$ errors for~$\xiF=0.1$.}
        \label{fig:Errors_h_L2_chi=0.1}
    \end{subfigure} \\
    \begin{subfigure}[b]{0.33\textwidth}
          \resizebox{\textwidth}{!}{\begin{tikzpicture}
\begin{axis}[%
width=3.00in,
height=2.00in,
at={(2.6in,1.099in)},
scale only axis,
xmode=log,
xmin=0.014142135659262,
xmax=0.198983865061495,
xminorticks=true,
xlabel = {$h$ [-]},
ylabel = {$\Tnorm{u_h-u}{\mathrm{CDG}}$},
xticklabel={\pgfmathparse{exp(\tick)}\pgfmathprintnumber{\pgfmathresult}},
x tick label style={
/pgf/number format/.cd, fixed, fixed zerofill,
precision=2},
ymode=log,
ymin=5e-8,
ymax=1e-1,
yminorticks=true,
axis background/.style={fill=white},
title style={font=\bfseries},
xmajorgrids,
xminorgrids,
ymajorgrids,
yminorgrids,
legend style={at={(0.96,0.40)},legend cell align=left, draw=white!15!black}
]

\addplot [color=purple, line width=2.0pt, mark=triangle*]
  table[row sep=crcr]{%
0.176776695296637   4.909935198048000e-03 \\
0.117852073006800   9.983597048342445e-04 \\
0.088388347648318   3.195670761772958e-04 \\
0.070710678118655   1.316745108905092e-04 \\
0.056568542494924   5.416010910682530e-05 \\
0.044194173824159   2.024168704163343e-05 \\
0.035355339059327   8.308162011151621e-06 \\
0.028284271247462   3.408150244558654e-06 \\
0.023570697444073   1.645119660224887e-06 \\
0.018857123640683   6.744278441004053e-07 \\
};

\addplot [color=cyan, line width=2.0pt, mark=square*]
  table[row sep=crcr]{%
0.141421356592613   1.939980238314000e-03 \\
0.070710678296307   1.232782766545326e-04 \\
0.047140452197538   2.475680131083004e-05 \\
0.035355339148153   7.786963679392058e-06 \\
0.028284271318523   3.197029060014680e-06 \\
0.023570226098769   1.555590811760477e-06 \\
0.017677669574077   4.840553347478324e-07 \\
0.014142135659262   2.005888121411444e-07 \\
};

\addplot [color=teal, line width=2.0pt, mark=pentagon*]
  table[row sep=crcr]{%
0.185101829426493   0.001574516935129 \\
0.134359609879251   0.000398910553734 \\
0.096923683183123   0.000099525195044 \\
0.068774852532456   0.000023807514432 \\
0.048201665231121   0.000006062512964 \\
0.033765166440742   0.000001477182480 \\
0.024216305406150   0.000000368842515 \\
0.017228947735558   0.000000091000063 \\
};

\addplot [color=orange, line width=2.0pt, mark=*]
  table[row sep=crcr]{%
0.198983865061495   0.005403319955988 \\
0.102476025836552   0.000316681033653 \\
0.076298886200117   0.000074024130329 \\ 
0.056914832977051   0.000018408418128 \\
0.043328680110681   0.000006152491736 \\
0.027848260524380   0.000001632359677 \\
};

\addplot [color=black, dotted, line width=1.5pt]
  table[row sep=crcr]{%
0.198983865061495   2.523945606415074e-02\\
0.014142135659262   8.000000080397586e-07\\
};

\end{axis}
\end{tikzpicture}%}
          \caption{CDG errors for~$\xiF$ in Assumption 2.2.}
        \label{fig:Errors_h_DG_chiF}
    \end{subfigure}    
    \begin{subfigure}[b]{0.33\textwidth}
          \resizebox{\textwidth}{!}{\begin{tikzpicture}
\begin{axis}[%
width=3.00in,
height=2.00in,
at={(2.6in,1.099in)},
scale only axis,
xmode=log,
xmin=0.014142135659262,
xmax=0.198983865061495,
xminorticks=true,
xlabel = {$h$ [-]},
ylabel = {$\Tnorm{u_h-u}{\mathrm{CDG}}$},
xticklabel={\pgfmathparse{exp(\tick)}\pgfmathprintnumber{\pgfmathresult}},
x tick label style={
/pgf/number format/.cd, fixed, fixed zerofill,
precision=2},
ymode=log,
ymin=5e-8,
ymax=1e-1,
yminorticks=true,
axis background/.style={fill=white},
title style={font=\bfseries},
xmajorgrids,
xminorgrids,
ymajorgrids,
yminorgrids,
legend style={at={(0.96,0.40)},legend cell align=left, draw=white!15!black}
]

\addplot [color=purple, line width=2.0pt, mark=triangle*]
  table[row sep=crcr]{%
0.176776695296637   3.355432793812000e-03 \\
0.117852073006800   6.724847715731633e-04 \\
0.088388347648318   2.137463634603247e-04 \\
0.070710678118655   8.772507503586256e-05 \\
0.056568542494924   3.597707410184936e-05 \\
0.044194173824159   1.341414660229592e-05 \\
0.035355339059327   5.497207600383899e-06 \\
0.028284271247462   2.252420852186843e-06 \\
0.023570697444073   1.086457434376704e-06 \\
0.018857123640683   4.450896908596112e-07 \\
};

\addplot [color=cyan, line width=2.0pt, mark=square*]
  table[row sep=crcr]{%
0.141421356592613   1.310263654805000e-03 \\
0.070710678296307   8.114733963117837e-05 \\
0.047140452197538   1.598664936648679e-05 \\
0.035355339148153   5.111560918802839e-06 \\
0.028284271318523   2.094890649377590e-06 \\
0.023570226098769   1.010006430206892e-06 \\
0.017677669574077   3.187218505761671e-07 \\
0.014142135659262   1.307593539634945e-07 \\
};

\addplot [color=teal, line width=2.0pt, mark=pentagon*]
  table[row sep=crcr]{%
0.185101829426493   3.046750448143400e-02 \\
0.134359609879251   1.059742057217000e-03 \\
0.096923683183123   2.781292654160828e-04 \\
0.068774852532456   2.754679625912000e-03 \\
0.048201665231121   6.537299508074659e-05 \\
0.033765166440742   1.708710531812959e-05 \\
0.024216305406150   4.180981510349747e-06 \\
0.017228947735558   1.200116171861723e-06 \\
};

\addplot [color=orange, line width=2.0pt, mark=*]
  table[row sep=crcr]{%
0.198983865061495   0.078772162629341 \\
0.102476025836552   0.005613884003047 \\
0.076298886200117   0.002181440239716 \\
0.056914832977051   0.000328009218582 \\
0.043328680110681   0.003198722935058 \\
0.027848260524380   0.000042742864614 \\
};

\addplot [color=black, dotted, line width=1.5pt]
  table[row sep=crcr]{%
0.198983865061495   2.523945606415074e-03\\
0.014142135659262   8.000000080397586e-08\\
};

\end{axis}
\end{tikzpicture}%}
          \caption{CDG errors for~$\xiF=1$.}
        \label{fig:Errors_h_DG_chi=1}
    \end{subfigure} 
    \begin{subfigure}[b]{0.33\textwidth}
          \resizebox{\textwidth}{!}{\begin{tikzpicture}
\begin{axis}[%
width=3.00in,
height=2.00in,
at={(2.6in,1.099in)},
scale only axis,
xmode=log,
xmin=0.014142135659262,
xmax=0.198983865061495,
xminorticks=true,
xlabel = {$h$ [-]},
ylabel = {$\Tnorm{u_h-u}{\mathrm{CDG}}$},
xticklabel={\pgfmathparse{exp(\tick)}\pgfmathprintnumber{\pgfmathresult}},
x tick label style={
/pgf/number format/.cd, fixed, fixed zerofill,
precision=2},
ymode=log,
ymin=5e-8,
ymax=1e-1,
yminorticks=true,
axis background/.style={fill=white},
title style={font=\bfseries},
xmajorgrids,
xminorgrids,
ymajorgrids,
yminorgrids,
legend style={at={(0.96,0.50)},legend cell align=left, draw=white!15!black}
]

\addplot [color=purple, line width=2.0pt, mark=triangle*]
  table[row sep=crcr]{%
0.176776695296637   6.452882137367800e-02 \\
0.117852073006800   1.073044012446400e-02 \\
0.088388347648318   3.000194097052000e-03 \\
0.070710678118655   1.143508912688000e-03 \\
0.056568542494924   4.336215570652385e-04 \\
0.044194173824159   1.466744967905345e-04 \\
0.035355339059327   5.470405749813467e-05 \\
0.028284271247462   2.042575166528725e-05 \\
0.023570697444073   9.183599419539226e-06 \\
0.018857123640683   3.489190642111527e-06 \\
};
\addlegendentry{Triangular}

\addplot [color=cyan, line width=2.0pt, mark=square*]
  table[row sep=crcr]{%
0.141421356592613   1.271772916244100e-02 \\
0.070710678296307   3.990177002550811e-04 \\
0.047140452197538   3.238971495370315e-04 \\
0.035355339148153   4.800685236158226e-05 \\
0.028284271318523   2.694258137963442e-04 \\
0.023570226098769   4.408257538989427e-05 \\
0.017677669574077   1.207610809332528e-05 \\
0.014142135659262   2.102093136606353e-06 \\
};
\addlegendentry{Cartesian}

\addplot [color=teal, line width=2.0pt, mark=pentagon*]
  table[row sep=crcr]{%
0.185101829426493   2.879494422400000e-03 \\
0.134359609879251   1.458914071551000e-03 \\
0.096923683183123   4.806835804754187e-04 \\
0.068774852532456   2.616861431703884e-04 \\
0.048201665231121   5.532983218203868e-05 \\
0.033765166440742   6.913362262004837e-04 \\
0.024216305406150   6.583278732373679e-06 \\
0.017228947735558   1.661367299665876e-05 \\
};
\addlegendentry{Voronoi}

\addplot [color=orange, line width=2.0pt, mark=*]
  table[row sep=crcr]{%
0.198983865061495   0.017245384514166 \\
0.102476025836552   0.004149973787618 \\
0.076298886200117   0.000418561492022 \\
0.056914832977051   0.000785130299460 \\
0.043328680110681   0.002833434899106 \\
0.027848260524380   0.000040183319922 \\
};
\addlegendentry{Agglomerated}

\addplot [color=black, dotted, line width=1.5pt]
  table[row sep=crcr]{%
0.198983865061495   2.523945606415074e-03\\
0.014142135659262   8.000000080397586e-08\\
};
\addlegendentry{$\sim h^4$}

\end{axis}
\end{tikzpicture}%}
          \caption{CDG errors for~$\xiF=0.1$.}
        \label{fig:Errors_h_DG_chi=0.1}
    \end{subfigure}
     \caption{Computed errors in $L^2(\Omega)$ norm (a-c) and CDG norm (d-f) w.r.t.~the mesh size~$h$ and for different mesh types. The errors have been computed both considering $\xiF$ as in Assumption~\ref{asm:xiF} (left), $\xiF=1$ (center), and $\xiF=0.1$ (right).}
     \label{fig:Errors_h_chi}
\end{figure}
For each type of mesh, we construct a sequence of refinements that we use to perform a convergence test with a fixed polynomial degree $p=4$. In particular, we consider three different choices of $\xiF$, first as in Assumption~\ref{asm:xiF}, and then fixing it as a constant, namely $\xiF=1$ or $\xiF=0.1$ for all facets~$F \in \FhI \cup \FhD$.
In Figure~\ref{fig:Errors_h_chi}, we report the errors in both the $L^2(\Omega)$ norm (see Figures~\ref{fig:Errors_h_L2_chiF}--\ref{fig:Errors_h_L2_chi=0.1}) and the CDG norm (see Figures~\ref{fig:Errors_h_DG_chiF}--\ref{fig:Errors_h_DG_chi=0.1}). We observe monotone convergence with the correct order for all mesh types only for the choice of $\xiF$ in Assumption~\ref{asm:xiF} (Figures~\ref{fig:Errors_h_L2_chiF} and~\ref{fig:Errors_h_DG_chiF}). On the contrary, the constant choice $\xiF=1$ causes loss of monotone convergence in meshes with a larger number of faces per element, namely the Voronoi and agglomerated meshes (Figures~\ref{fig:Errors_h_L2_chi=1} and~\ref{fig:Errors_h_DG_chi=1}). Finally, a significant loss of the convergence rates is observed for the choice $\xiF=0.1$ 
also for the Cartesian grids (Figures~\ref{fig:Errors_h_L2_chi=0.1} and~\ref{fig:Errors_h_DG_chi=0.1}). No significant changes are observed for the triangular mesh in terms of convergence rates; however, the errors are one order larger for~$\xiF = 0.1$ than for the other cases.
\par
\begin{figure}[t!]
    \resizebox{\textwidth}{!}{\input{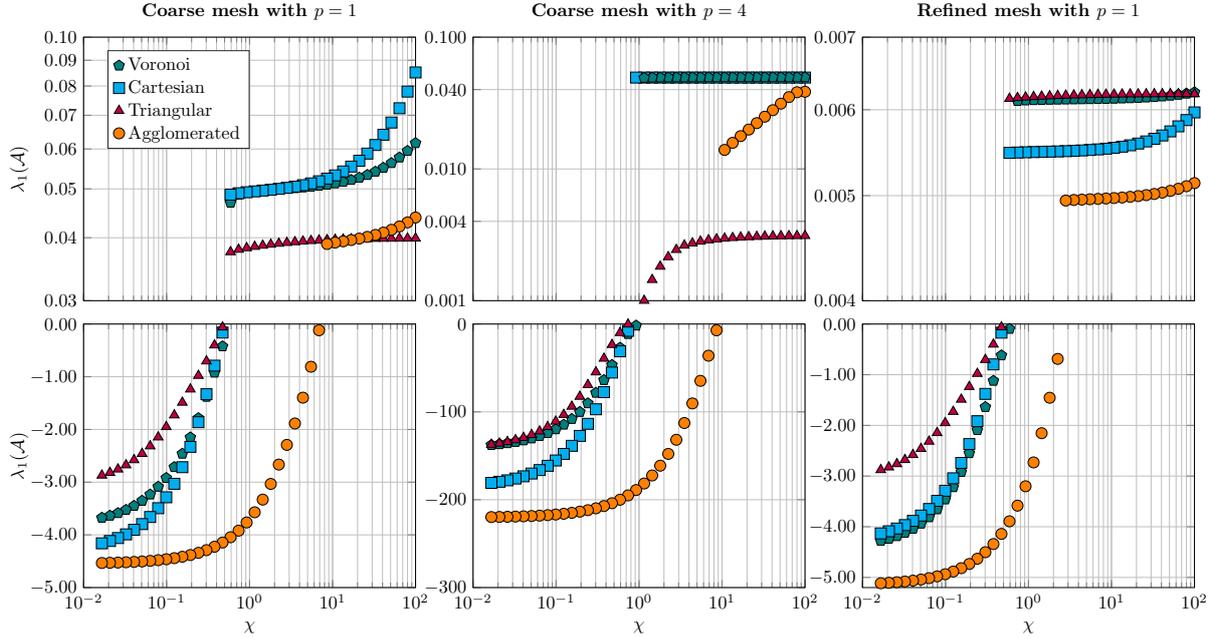}} 
    \caption{Minimum eigenvalue of the matrix $\mathcal{A}$ for different meshes and polynomial orders: coarse mesh with $p=1$ (left), coarse mesh with $p=4$ (center), and fine mesh with $p=1$ (right). Each plot is divided into an upper panel where the eigenvalue is positive in log scale and a lower panel for the negative values in normal scale. The vertical lines are the corresponding~$\max(\NF)$ for each mesh, that is the theoretical threshold
    for~$\xiF$.
    \label{fig:minimum_eigval}}
\end{figure}
To analyze the reasons for the loss of convergence, we plot the minimum eigenvalue of the matrix $\mathcal{A}$ for the different mesh types. In particular, we consider a coarse mesh ($\sim 100$ elements) with $p=1$ and $p=4$ and a fine mesh ($\sim 1000$ elements) with $p=1$. We report the results in Figure~\ref{fig:minimum_eigval}, where we observe that eigenvalue positivity depends on the choice of $\xiF$. The vertical lines indicate the value of~$\max(\NF)$ for each mesh, which corresponds to the theoretical threshold for $\xiF$. 
We observe that the positivity of the eigenvalues is preserved for values of $\chi$ smaller than those prescribed by Assumption~\ref{asm:xiF}. The coarse meshes employed in this test case are shown in Figure~\ref{fig:test_case_5.4_meshes}, together with the associated $\max(\NF)$ and $\mathrm{mean}(\NF)$.
\begin{figure}[t!]
	\centering
	{\includegraphics[width=0.97\textwidth]{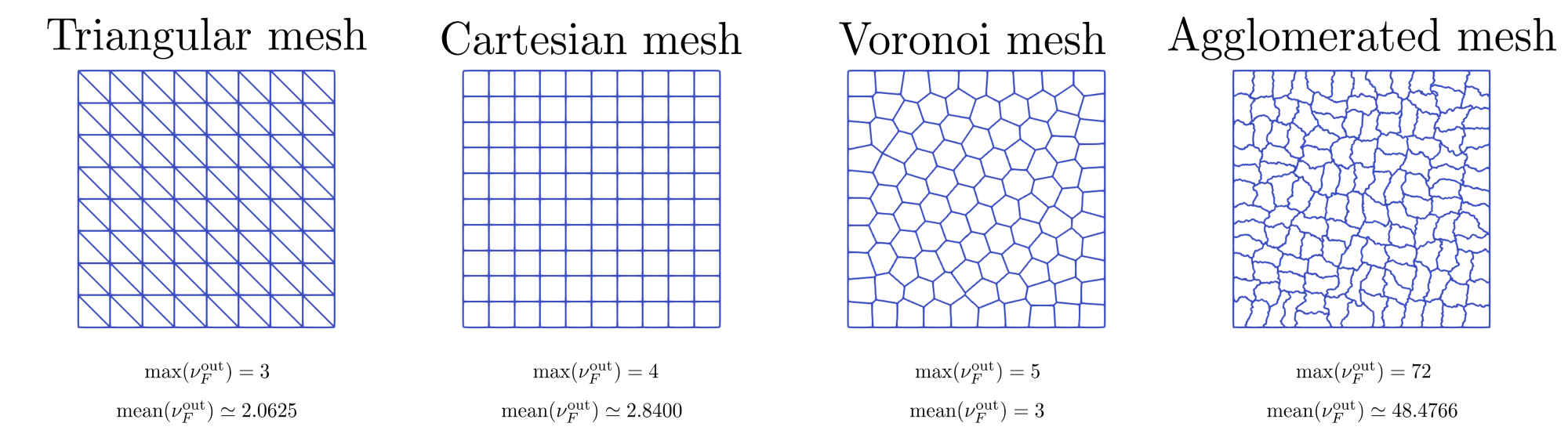}}
	\caption{Coarse computational meshes ($\simeq 100$ elements) with associated $\max(\NF)$ and $\mathrm{mean}(\NF)$.}
	\label{fig:test_case_5.4_meshes}
\end{figure}
\par
The above discussion explains why the agglomerated mesh, where each element has a large number of facets (see Figure \ref{fig:test_case_5.4_meshes}), requires a larger $\xiF$ to maintain coercivity. Indeed, a refinement of the agglomerated mesh that reduces the number of facets restores eigenvalue positivity for smaller values of $\xiF$. Coherently, no significant changes occur for the other mesh types upon refinement. Moreover, the minimum value of~$\chi$ that produces a positive-definite stiffness matrix depends just mildly on the degree of approximation~$p$. However, for values of~$\chi$ below this threshold, substantially larger negative values are obtained. 
\subsection{Choice of~\texorpdfstring{$\alpha_F$}{alpha_F} in the \texorpdfstring{$p$}{p}-homogeneous case}
\label{sec:alpha-F-homogeneous-p}
In this test case, we discuss the freedom in choosing the parameters $\alpha_F$ in the case of a homogeneous polynomial degree $p$.
We perform a simulation on the domain $\Omega = (0,1)^2$, using an agglomerated mesh of 128 elements for both $p=2$ and $p=5$.
We consider different criteria for choosing~$\alpha_F$ based on: directional fluxes, the prescribed ordering of element indices, element sizes, or a random choice.
As illustrated in Table~\ref{tab:test_case_aF}, all criteria for  setting $\alpha_F = 1$ lead to comparable CDG errors and condition numbers of the matrix $\mathcal{A}$, with only marginal variations among the different strategies. This indicates that, at least for this test case,  the method is rather insensitive to the particular choice of $\alpha_F$.
\begin{table}[t]
\centering
\renewcommand{\arraystretch}{1.2}
\begin{tabular}{|c|c|c|c|c|}
\hline
 \multirow{2}{*}{\textbf{Criterion to set} {$\alpha_F = 1$}} & \multicolumn{2}{c|}{{$\boldsymbol{p=2}$}} & \multicolumn{2}{c|}{{$\boldsymbol{p=5}$}} \\ \cline{2-5}
 & $\Tnorm{u-u_h}{\mathrm{CDG}}$ & $\kappa_2(\mathcal{A})$ & $\Tnorm{u-u_h}{\mathrm{CDG}}$ & \textbf{$\kappa_2(\mathcal{A})$}  \\\hline
$\boldsymbol{n}_F\cdot(1,0)^{\top} \geq 0$  & $2.20\times10^{-1}$ &  $3.34\times10^{4}$ & $1.45\times10^{-4}$ &  $1.41\times10^{7}$ \\ \hline
$\boldsymbol{n}_F\cdot(1,1)^{\top} \geq 0$ & $2.18\times10^{-1}$ &  $3.13\times10^{4}$ & $1.46\times10^{-4}$ &  $1.38\times10^{7}$ \\ \hline
$\boldsymbol{n}_F\cdot(0,1)^{\top} \geq 0$ & $2.18\times10^{-1}$ &  $3.43\times10^{4}$ & $1.38\times10^{-4}$ &  $1.38\times10^{7}$ \\ \hline
$\mathrm{id}(K_1)>\mathrm{id}(K_2)$        & $2.28\times10^{-1}$ &  $5.39\times10^{4}$ & $1.32\times10^{-4}$ &  $1.53\times10^{7}$ \\ \hline
$|K_1|>|K_2|$ 
& $2.29\times10^{-1}$ &  $6.10\times10^{4}$ & $1.35\times10^{-4}$ &  $1.33\times10^{7}$ \\ \hline
Random 
& $2.32\times10^{-1}$ &  $6.10\times10^{4}$ & $1.56\times10^{-4}$ &  $2.00\times10^{7}$ \\ \hline
\end{tabular}
\caption{Computational CDG errors and $2$-condition number of matrix $\mathcal{A}$ w.r.t. different criteria for choosing $\alpha_F$.}
	\label{tab:test_case_aF}
\end{table}
\subsection{Choice of~\texorpdfstring{$\alpha_F$}{alpha_F} in the \texorpdfstring{$p$}{p}-variable case}
\label{sec:choice-alpha-p-variable}
\begin{figure}[t!]
	\centering
	{\includegraphics[width=\textwidth]{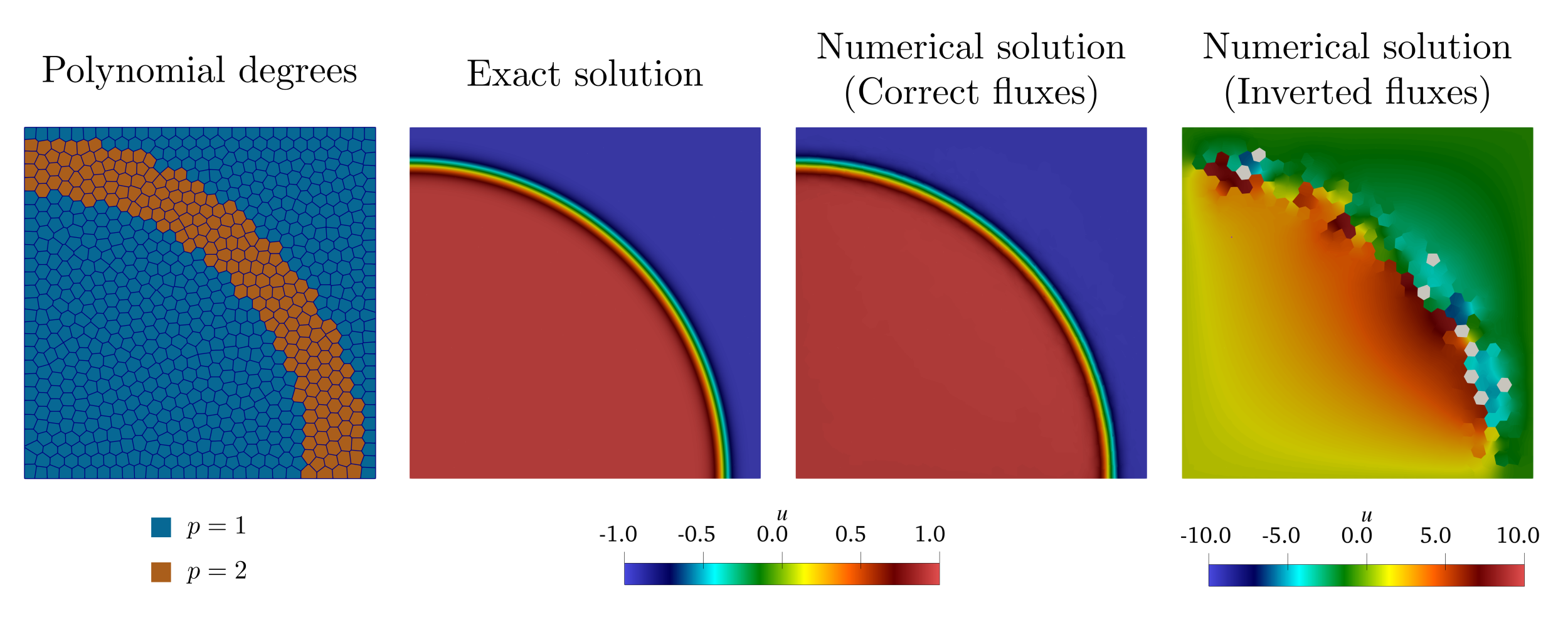}}
	\caption{Computational mesh and polynomial degrees distribution (first figure), exact solution of the problem (second figure), and numerical solutions computed with the correct fluxes (third figure) and with the inverted ones (fourth figure). In the latter, the color gray %
    corresponds to \texttt{NaN} values.}
	\label{fig:test_case_5.4}
\end{figure}
We now study the impact of Assumption~\ref{asm:alpha_F} on the stability of the numerical method for variable degrees of approximation.
We perform a simulation on the domain $\Omega = (0,1)^2$, using a regular Voronoi mesh of 800 elements (see Figure~\ref{fig:test_case_5.4}, leftmost panel). We consider the problem with exact solution~$u(x,y) = \tanh\left(-20(x^2+y^2-0.8)\right)$ and a variable polynomial degree with~$p = 2$ in the region of highest gradient, and~$p = 1$ elsewhere, as illustrated in Figure~\ref{fig:test_case_5.4} (first panel).
\par
In Figure~\ref{fig:test_case_5.4} (third panel), we report the numerical solution computed with   
$\alpha_F$ chosen as in Assumption~\ref{asm:alpha_F} (``correct fluxes"), which provides an accurate approximation. Moreover, we report the numerical solution obtained by choosing ``inverted fluxes", namely by choosing~$\alpha_F$ such that the trace of $\sigmah^F$ is taken from the lowest-order mesh element in the numerical fluxes. 
Figure~\ref{fig:test_case_5.4}  (rightmost panel) exhibits a severe degradation of the approximation, resulting from the ill-conditioning of the stiffness matrix.
\par
\begin{figure}[t!]
	\centering
	{\includegraphics[width=\textwidth]{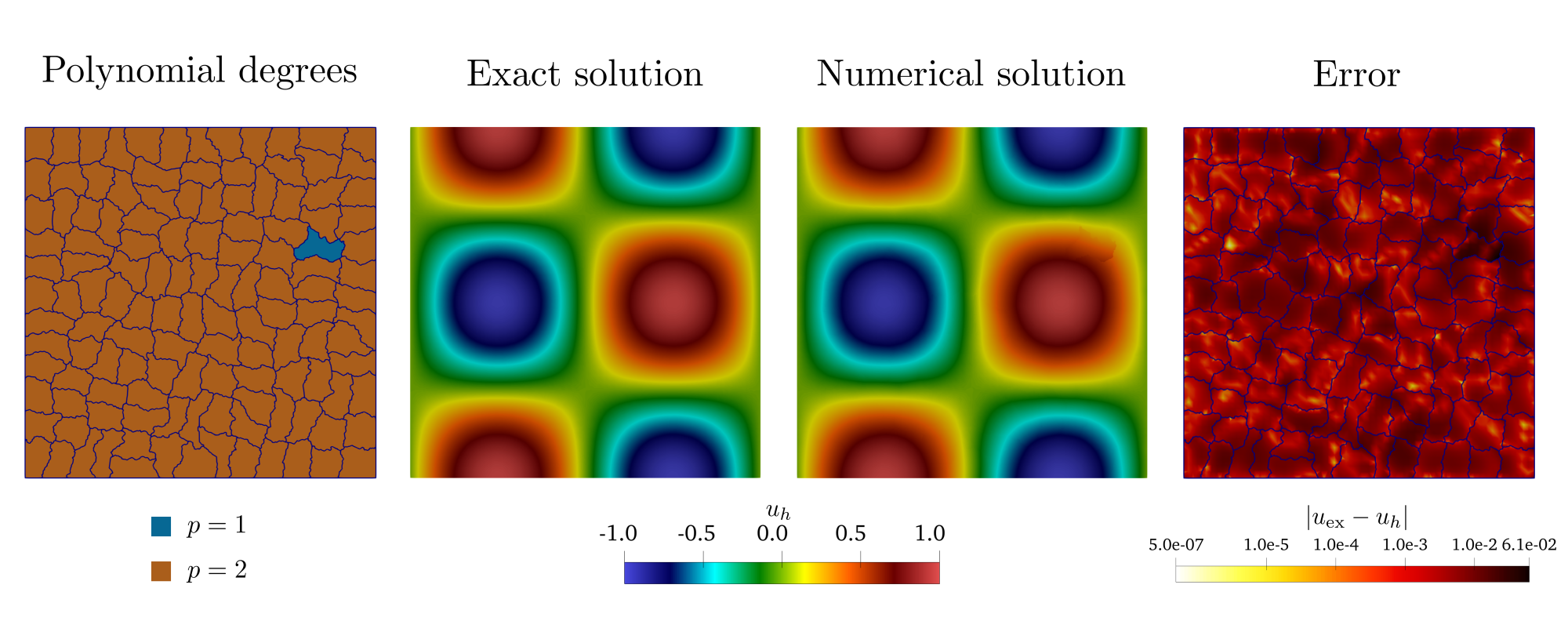}}
	\caption{ 
    Agglomerated mesh and polynomial degrees distribution (first figure), exact solution of the problem (second figure), numerical solution (third figure), and pointwise error (fourth figure).}
	\label{fig:test_case_5.4.2}
\end{figure}
We additionally perform a pathological test on the domain $\Omega = (0,1)^2$, using an agglomerated mesh composed of 128 elements. We fix the polynomial degree equal to 1 on a single element, and 
set~$p = 2$ on all the remaining ones (see Figure~\ref{fig:test_case_5.4.2}, first panel).
The numerical solution is reported in Figure~\ref{fig:test_case_5.4.2}  (third panel), and shows a good agreement with the exact solution. In the fourth panel, we display the pointwise error between the numerical and exact solutions, which show that the scheme retains good approximation properties even in this unfavorable configuration, where no degradation is observed close to the element with~$p = 1$. This result provides additional evidence that Assumption~\ref{asm:alpha_F} does not compromise the stability or accuracy of the method, even in the presence of pathological polynomial distributions.
\subsection{$p$-convergence for a singular solution}
\label{sec:p-convergence}
To assess whether %
the suboptimal convergence in~$p$ predicted by the \textit{a priori} error estimate in Theorem~\ref{thm:a-priori-estimate} is observed practice, we consider the following benchmark test case from~\cite[Ex. 3.3]{georgoulis_suboptimality_2010}. 
We take the model problem on the domain~$\Omega = (-1,1)\times(0,2)$ with source term and nonhomogeneous Dirichlet boundary conditions chosen so that the exact solution is given by
\begin{equation*}
u(x,y) = (x^2+y^2)^{3/2}.
\end{equation*}
As described in~\cite[Ex. 3.3]{georgoulis_suboptimality_2010}, this %
solution satisfies~$u \in H^{4-\varepsilon}(\Omega) \setminus H^4(\Omega)$ for any $\varepsilon>0$, and has a pointwise singularity at~$(0, 0) \in \partial \Omega$.
Therefore, Theorem~\ref{thm:a-priori-estimate} predicts a suboptimal convergence rate of~$\mathcal{O}(p^{-2})$.
\par
We test the convergence using an unstructured simplicial mesh with 26 triangles and a Voronoi-type polygonal mesh with 10 elements, both chosen so that the point singularity lies on a boundary edge of~$\Omegah$. %
Fixing the mesh, we vary the uniform polynomial degree $p = 1,\dots,12$ and compute the errors in the corresponding %
energy norms for 
the CDG method in Section~\ref{sec:CDG} and the IPDG method from~\cite{georgoulis_suboptimality_2010}. In Figures~\ref{fig:Errors_p_tria}--\ref{fig:Errors_p_voro}, we report the results, which show an optimal decay~$\mathcal{O}(p^{-3})$ for the CDG method, and the expected suboptimal convergence~$\mathcal{O}(p^{-5/2})$ for the IPDG method (consistent with the results in \cite[\S3]{georgoulis_suboptimality_2010}). 
This suggests that the error estimate in Theorem~\ref{thm:a-priori-estimate} may be not sharp in the power of~$p$, which further motivates the CDG method and suggests that a different theoretical approach should be investigated to establish the observed optimal~$p$-convergence. 
\begin{figure}[t!]
    \begin{subfigure}[b]{0.45\textwidth}
          \resizebox{\textwidth}{!}{\definecolor{mycolor2}{rgb}{0.00000,1.00000,1.00000}%
\begin{tikzpicture}
\begin{axis}[%
width=3in,
height=2.8in,
at={(2.6in,1.099in)},
scale only axis,
xmode=log,
xmin=1,
xmax=12,
xminorticks=true,
xlabel = {$p$ [-]},
ylabel = {},
xticklabel={\pgfmathparse{exp(\tick)}\pgfmathprintnumber{\pgfmathresult}},
x tick label style={
/pgf/number format/.cd, fixed, fixed zerofill,
precision=2},
ymode=log,
ymin=5e-5,
ymax=5e+0,
yminorticks=true,
axis background/.style={fill=white},
title style={font=\bfseries},
xmajorgrids,
xminorgrids,
ymajorgrids,
yminorgrids,
legend style={at={(0.96,0.96)},legend cell align=left, draw=white!15!black}
]
              
\addplot [color=blue, line width=2.0pt, mark=square*]
  table[row sep=crcr]{%
   1.0000e+00   2.5982e+00 \\
   2.0000e+00   1.7066e-01 \\
   3.0000e+00   1.5365e-02 \\
   4.0000e+00   2.6592e-03 \\
   5.0000e+00   1.5357e-03 \\
   6.0000e+00   7.2925e-04 \\
   7.0000e+00   5.1737e-04 \\
   8.0000e+00   3.2251e-04 \\
   9.0000e+00   2.3942e-04 \\
   1.0000e+01   1.7198e-04 \\
   1.1000e+01   1.3136e-04 \\
   1.2000e+01   1.0268e-04 \\ 
};
\addlegendentry{$\Tnorm{u-u_h}{\mathrm{CDG}}$}

\addplot [color=orange, line width=2.0pt, mark=*]
  table[row sep=crcr]{%
   1.0000e+00   2.5153e+00 \\
   2.0000e+00   2.2629e-01 \\
   3.0000e+00   2.1461e-02 \\
   4.0000e+00   4.5960e-03 \\
   5.0000e+00   2.2391e-03 \\
   6.0000e+00   1.4733e-03 \\
   7.0000e+00   6.6344e-04 \\
   8.0000e+00   6.5695e-04 \\
   9.0000e+00   2.8102e-04 \\
   1.0000e+01   3.4516e-04 \\
   1.1000e+01   1.4545e-04 \\
   1.2000e+01   2.0178e-04 \\
};
\addlegendentry{$\Tnorm{u-u_h}{\mathrm{IPDG}}$}

\addplot [color=black, line width=1.5pt, dashed]
  table[row sep=crcr]{%
   1.0000e+00   1.000e-01 \\
   1.2000e+01   2.004e-04 \\
};
\addlegendentry{$p^{-5/2}$}

\addplot [color=black, line width=1.5pt, dotted]
  table[row sep=crcr]{%
   1.0000e+00   0.1250e+00 \\
   1.2000e+01   0.7233e-04 \\
};
\addlegendentry{$p^{-3}$}

\end{axis}
\end{tikzpicture}%}
          \caption{Simplicial meh}
        \label{fig:Errors_p_tria}
    \end{subfigure}
    \hspace{0.1in}
    \begin{subfigure}[b]{0.45\textwidth}
          \resizebox{\textwidth}{!}{\definecolor{mycolor2}{rgb}{0.00000,1.00000,1.00000}%
\begin{tikzpicture}
\begin{axis}[%
width=3in,
height=2.8in,
at={(2.6in,1.099in)},
scale only axis,
xmode=log,
xmin=1,
xmax=12,
xminorticks=true,
xlabel = {$p$ [-]},
ylabel = {},
xticklabel={\pgfmathparse{exp(\tick)}\pgfmathprintnumber{\pgfmathresult}},
x tick label style={
/pgf/number format/.cd, fixed, fixed zerofill,
precision=2},
ymode=log,
ymin=5e-5,
ymax=5e+0,
yminorticks=true,
axis background/.style={fill=white},
title style={font=\bfseries},
xmajorgrids,
xminorgrids,
ymajorgrids,
yminorgrids,
legend style={at={(0.96,0.96)},legend cell align=left, draw=white!15!black}
]
              
\addplot [color=blue, line width=2.0pt, mark=square*]
  table[row sep=crcr]{%
   1.0000e+00   3.4098e+00 \\
   2.0000e+00   2.9460e-01 \\
   3.0000e+00   2.8485e-02 \\
   4.0000e+00   7.9469e-03 \\
   5.0000e+00   2.6878e-03 \\
   6.0000e+00   1.5475e-03 \\
   7.0000e+00   9.2106e-04 \\
   8.0000e+00   5.6632e-04 \\
   9.0000e+00   4.0511e-04 \\
   1.0000e+01   2.7211e-04 \\
   1.1000e+01   2.1084e-04 \\
   1.2000e+01   1.5037e-04 \\
};
\addlegendentry{$\Tnorm{u-u_h}{\mathrm{CDG}}$}

\addplot [color=orange, line width=2.0pt, mark=*]
  table[row sep=crcr]{%
   1.0000e+00   4.3790e+00 \\
   2.0000e+00   5.1203e-01 \\
   3.0000e+00   4.9584e-02 \\
   4.0000e+00   1.8021e-02 \\
   5.0000e+00   3.8828e-03 \\
   6.0000e+00   4.8899e-03 \\   
   7.0000e+00   1.2720e-03 \\
   8.0000e+00   2.1185e-03 \\
   9.0000e+00   5.7158e-04 \\
   1.0000e+01   1.1263e-03 \\
   1.1000e+01   2.9602e-04 \\
   1.2000e+01   6.7761e-04 \\
};
\addlegendentry{$\Tnorm{u-u_h}{\mathrm{IPDG}}$}

\addplot [color=black, line width=1.5pt, dashed]
  table[row sep=crcr]{%
   1.0000e+00   0.2500e+00 \\
   1.2000e+01   0.5012e-03 \\
};
\addlegendentry{$p^{-5/2}$}

\addplot [color=black, line width=1.5pt, dotted]
  table[row sep=crcr]{%
   1.0000e+00   0.1250e+00 \\
   1.2000e+01   0.7233e-04 \\
};
\addlegendentry{$p^{-3}$}

\end{axis}
\end{tikzpicture}%}
          \caption{Voronoi mesh}
        \label{fig:Errors_p_voro}
    \end{subfigure}    
     \caption{Comparison of the $p$-convergence for the CDG and the IPDG methods in the energy norm for (a) a simplicial mesh, and (b) a Voronoi mesh.
     }
\end{figure}
\section{Concluding remarks}
\label{sec:conclusion}
In this work, we have presented theoretical and computational aspects of the $hp$-version of the CDG method on polytopal meshes. In particular, we proved the well-posedness of the scheme and derived \emph{a priori} error estimates. In addition, we introduced efficient strategies to implement %
the CDG, LDG, and BR2 methods %
within a common framework. The numerical experiments validated the theory and discussed the advantages of the CDG method in terms of computational cost and stencil compactness, making it particularly appealing for applications involving polytopal meshes. Although an implementation for 3D polytopal meshes is not yet available, we expect these computational advantages to be even more significant in three dimensions.
Possible future research directions include the extension to more complex problems, the study of iterative solvers and preconditioners, and the development of~$hp$-adaptivity driven by suitable \emph{a posteriori} error estimators.
\section*{Acknowledgments}
This research was partially supported by the European Union (ERC Synergy, NEMESIS, project number 101115663). Views and opinions expressed are, however, those of the authors only and do not necessarily reflect those of the EU or the ERC Executive Agency.
This research was funded in part by the Austrian Science Fund (FWF) project 10.55776/F65. The authors are members of the INdAM-GNCS group. The present research is part of the activities of Dipartimento di Eccellenza 2023-2027 (Dipartimento di Matematica, Politecnico di Milano).

\bibliographystyle{plain}
\bibliography{references.bib}
\end{document}